\documentclass[webpdf,modern,medium,namedate]{JRSSB_arxiv} 

\onecolumn
\usepackage{setspace}
\usepackage[fontsize=12pt]{fontsize}

\makeatletter
\newcommand\figcaptionfont{\fontsize{10pt}{12pt}\sffamily\selectfont}
\long\def\@figurecaption#1#2{%
  \begingroup
  \hspace*{1pt}\vspace*{-1.5pt}\newline
  \figcaptionfont
  #1:\space{#2\strut\par}%
  \endgroup\vspace{\belowcaptionskip}}

\newcommand\tabcaptionfont{\fontsize{10pt}{12pt}\sffamily\selectfont}
\long\def\@tablecaption#1#2{%
  \begingroup
  \tabcaptionfont
  \textbf{#1.}\space{#2\strut\par}%
  \endgroup\vspace{\belowcaptionskip}}
\makeatother

\usepackage{amsfonts}
\usepackage{amsbsy}
\usepackage{calrsfs}
\usepackage{bbold}
\usepackage{mwe}
\usepackage{soul}
\usepackage{xr}
\usepackage{bbm}
\usepackage{bm}
\usepackage{cancel}
\usepackage{booktabs}
\usepackage{longtable}
\usepackage[normalem]{ulem}
\hypersetup{bookmarksdepth=3}

\newcommand{\parhead}[1]{%
  \par\smallskip
  \noindent{\fontsize{10}{12}\selectfont\sffamily\bfseries #1}%
  \par\nobreak\smallskip
}

\makeatletter
\def\subsecsize{\sffamilyfontbold\fontsize{10}{12}\selectfont}
\def\subsection{\@startsection{subsection}{2}{\z@}%
  {-10\p@}{3\p@}{\reset@font\raggedright\subsecsize}}
\makeatother

\makeatletter
\def\@itemize[#1]{%
  \ifnum \@itemdepth >3 \@toodeep\else
    \advance\@itemdepth\@ne
    \edef\@itemctr{item\romannumeral\the\@itemdepth}%
    \list{\csname label\@itemctr\endcsname}{%
      \itemargs
      \settowidth\labelwidth{\csname label\@itemctr\endcsname}%
      \setlength\itemindent{\z@}%
      \setlength\leftmargin{\labelwidth}%
      \addtolength\leftmargin{\labelsep}%
      \def\makelabel##1{##1\hss}}%
  \fi}
\makeatother

\makeatletter
\def\enumerate{%
  \@ifnextchar[{\@numerate}{\@numerate[0.]}}
\def\@numerate[#1]{%
  \ifnum \@enumdepth >3 \@toodeep\else
    \advance\@enumdepth\@ne
    \edef\@enumctr{enum\romannumeral\the\@enumdepth}%
    \list{\csname label\@enumctr\endcsname}{%
      \enumargs
      \usecounter{\@enumctr}%
      \settowidth\labelwidth{#1}%
      \setlength\itemindent{\z@}%
      \setlength\leftmargin{\labelwidth}%
      \addtolength\leftmargin{\labelsep}%
      \def\makelabel##1{##1\hss}}%
  \fi}
\makeatother

\newcommand\comment[1]{{\color{red} \fbox{Stuff commented out -- see the latex source.}}}

\theoremstyle{plain}
\newtheorem{assumption}{Assumption}[section]
\newtheorem{defn}{Definition}[section]

\newtheorem{thm}{Theorem}[section]

\newtheorem{lemma}[thm]{Lemma}
\newtheorem{prop}[thm]{Proposition}
\newtheorem{remark}{Remark}

\newcommand{\bn}{\begin{enumerate}}
\newcommand{\en}{\end{enumerate}}
\newcommand{\bct}{\begin{center}}
\newcommand{\ect}{\end{center}}
	
\newcommand{\im}{\item}
\newcommand{\bc}{\begin{cases}}
\newcommand{\ec}{\end{cases}}
\newcommand{\var}{{\rm Var}}

\newcommand{\tr}{{\rm tr}}
\DeclareMathOperator*{\argmin}{argmin}

\newcommand{\bbR}{{\mathbb R}}

\newcommand{\boldc}{{\boldsymbol{c}}}

\newcommand{\wh}{\widehat}
\newcommand{\wt}{\widetilde}
\newcommand{\E}{\mathbb{E}}
\newcommand{\pr}{\mathbb{P}}

\newcommand{\CN}{\mathcal{N}}
\newcommand{\CG}{\mathcal{G}}
\newcommand{\bbL}{\mathbb{L}}
\newcommand{\CL}{\mathcal{L}}

\newcommand{\CR}{\mathcal{R}}

\newcommand{\bbH}{\mathbb{H}}

\def\tilde{\widetilde}
\DeclareMathOperator*{\esssup}{ess\,sup}

\renewcommand{\theenumi}{\roman{enumi}}

\newcommand{\biz}{\begin{itemize}}
\newcommand{\eiz}{\end{itemize}}

\def\hat{\widehat}

\newcommand{\abs}[1]{\left|#1\right|}
\newcommand{\norm}[1]{\left\|#1\right\|}
\newcommand{\ind}[1]{\mathbf{1}\!\left(#1\right)}

\allowdisplaybreaks

\graphicspath{{./figures}}

\begin{document}
\journaltitle{Journal of the Royal Statistical Society Series B: Statistical Methodology}
\DOI{}
\copyrightyear{2026}
\pubyear{2026}
\access{}
\appnotes{Original Article}
\firstpage{1}

\title[Function-On-Function Regression Through SNOs]
{Function-On-Function Regression Through Separable Neural Operators}

\author[1$\ast$]{Tailen Hsing}
\author[2]{Su-Yun Huang}
\author[2]{Toshinari Morimoto}
\authormark{Hsing et al.}

\address[1]{\orgdiv{Department of Statistics},
\orgname{University of Michigan},
\orgaddress{Ann Arbor, \state{Michigan}, \country{USA}}
}

\address[2]{\orgdiv{Institute of Statistical Science},
\orgname{Academia Sinica},
\orgaddress{Taipei, \country{Taiwan}}
}

\corresp[$\ast$]{Corresponding author:
\href{mailto:xxxxx}{thsing@umich.edu}
}

\abstract{
This paper investigates the estimation of the regression operator in function-on-function regression models. While traditional research has predominantly focused on linear models or their immediate nonlinear extensions, we propose a neural operator approach to accommodate general regression operators under mild smoothness assumptions. Operator learning has emerged as an active area of machine learning, particularly for solving physical models governed by partial differential equations. Using this paradigm, our methodology introduces the separable neural operator, 
a neural-operator architecture
that represents the regression operator through input-dependent coefficient functions and output-dependent basis functions.
Beyond adapting this architecture to the regression operator estimation problem, we establish the consistency of the estimator under relatively mild smoothness and sampling conditions, allowing functional data to be observed on dense, possibly irregular, discrete grids. We also apply the proposed approach to the BGC Argo data and demonstrate its potential for oceanographic research.
}

\keywords{
function-on-function regression,
separable neural operators,
deep learning,
regression operator estimation,
statistical consistency,
irregular functional data,
BGC-Argo data
}

\maketitle


\section{Introduction}

Functional data analysis (FDA) has been an active research area in statistics for over two decades. Some background of FDA can be found in a number of books including 
\cite{ramsaysilverman}, \cite{Horvath2012},  \cite{hsing2015theoretical}, \cite{Kokoszka2018}, 
as well as review papers \cite{Wang2016}, \cite{Koner2023}, and \cite{Gertheiss2024}, to name a few.

The topic of functional regression is at the center of FDA research. The most widely studied model is the so-called scalar-on-function linear regression. In this model, the functional predictor $X$ resides in some Hilbert space $\bbH$, typically a function space, while the response $Y$ is a scalar in $\bbR$. Their relationship is given by $\E(Y|X)= \langle X,\beta\rangle$ for some regression slope function $\beta\in\bbH$. Combinations of other response and predictor types, as well as non-linear conditional expectation models, have also been considered, albeit to a lesser extent than the scalar-on-function linear regression. These aforementioned publications, particularly the recent reviews, provide comprehensive overviews of functional regression.

In this paper, we consider the function-on-function regression model:
\begin{align} \label{e:model_0}
Y = G_0(X_1,\ldots,X_P) + \varepsilon,
\end{align}
where each predictor $X_p$ resides in some function space $\bbH_{1,p}$, both the response $Y$ and error process $\varepsilon$ belong to another function space $\bbH_2$, and $G_0$ is a possibly nonlinear operator mapping from $\prod_{p=1}^P \bbH_{1,p}$ to $\bbH_2$. We refer to $G_0$ as the regression operator. Recently, \cite{rao2023modern} provided a comprehensive survey of functional regression along with a detailed compilation of specific forms for $G_{0}$ studied in the literature.

The problem that we focus on is the estimation of $G_0$. Given the complexity of $G_0$, it is natural to leverage recent advances in operator learning for this problem. 
In that regard, \cite{rao2023modern} considered densely observed functional data, with $\bbH_{1,p}=\bbH_2=\bbL^2[0,1]$, and introduced an operator neural-network architecture with $L$ hidden layers for the regression problem. Conceptually, the input and output layers consist of $H_{(p)}^{(0)}(s)=X_p(s), p \in [P]$, and $H^{(L)}(t)= \wh Y(t)$, respectively, and the $k$-th neuron in the $l$-th hidden layer is defined by
\begin{align*}
H_{(k)}^{(l)}(s) = \sigma\left(b_{(k)}^{(l)}(s)+\sum_{j=1}^{J_{l-1}} \int_0^1 w_{(j,k)}^{(l)}(s,t)H_{(j)}^{(l-1)} (t)dt\right),
\end{align*}
for some activation function
$\sigma$, bias function  $b_{(k)}^{(l)} \in \bbL^2[0,1]$,  and weight kernel $w_{(j,k)}^{(l)}\in \bbL^2([0,1]\times [0,1])$. This is a natural adaptation  of the feedforward network for scalar data to functional data, and is 
 sometimes referred to as ``neural operator'' \citep[cf.][]{kovachki2023neural} in operator learning.
To minimize $\sum_{i=1}^n \int_0^1(Y_i(t)-\widehat{Y}_i(t))^2 d t$,
this architecture propagates information through nonlinear transformations of integral operators, and can be implemented via, e.g., function-on-function direct neural networks (FFDNN) or function-on-function basis neural networks (FFBNN) introduced by \cite{rao2023modern}. Their work numerically demonstrated the feasibility of the approach for several models, although statistical guarantees for the resulting regression-operator estimator were not developed.

Several other classes of neural operators have recently been explored in the general field of operator learning. The majority of this literature has focused on approximating solution operators of PDEs. 
Examples include Fourier neural operators~\citep{li2020fourier}, DeepONet~\citep{lu2021learning}, transformer-based neural operators~\citep{li2022transformer,hao2023gnot}, and graph neural operators~\citep{li2020neural}; see, also, \cite{subedi2025operator} for a review of operator learning from a statistical perspective.
The present paper differs from much of that literature in its statistical purpose. Our goal is to estimate a regression operator from observed functional data, rather than to approximate a PDE solution operator based on synthetic data. Our formulation is also different from the function-on-function neural-network architecture of \citet{rao2023modern},  
and is based on the natural idea of separable representation, as described in the following.

For ease of exposition, we assume from this point on that the number of predictors is $P=1$ and that the function spaces are $\bbH_1=\bbH_2=\bbL^2[0,1]$ in \eqref{e:model_0}. Generalization to $P>1$ is straightforward, which we do in the Argo data analysis in Section~\ref{s:argo}.
Our approach is motivated by the basic idea that any $G(x)(\cdot)\in\bbL^2[0,1]$ can be represented as
\begin{align} \label{e:separable}
G(x)(t) = \sum_{k\ge 1} c_k(x) \phi_k(t),\quad t\in [0,1],
\end{align}
where the expansion is understood in the $\bbL^2[0,1]$ sense. Here, $c_k: \bbL^2[0,1]\to\bbR$ and $\phi_k\in\bbL^2[0,1]$. If $\{\phi_k, k\ge 1\}$ is a complete orthonormal system of $\bbL^2[0,1]$, then $c_k(x)=\int_0^1 G(x)(t)\phi_k(t)dt$. However, we consider more general classes of $\{\phi_k, k\ge 1\}$ that can be learned from data using neural networks. 
We describe the representation in \eqref{e:separable} as ``separable'', since the coefficient-like terms $c_k(x)$ depend only on the input function $x$, whereas the basis-like terms $\phi_k(t)$ depend only on the output argument $t$. 
Accordingly, an operator expressed in terms of such a separable representation is (for convenience) referred to as a separable operator. 
For the estimation of $G_0$, our candidate estimators are finite-rank separable operators where both the $c_k$ and $\phi_k$ are modeled by neural networks.

The theoretical foundation for operator learning based on separable representations was established by \cite{chen1995universal} through a seminal universal approximation theory. 
This framework later inspired the DeepONet architecture introduced by \citet{lu2021learning}, which integrates the foundational approximation result developed by \cite{chen1995universal} with modern neural network designs. 
Section \ref{s:SNO} provides a more thorough discussion of these topics.

The remainder of this paper is organized as follows. Section~\ref{s:SNO} introduces the separable neural operator and discusses its connection with DeepONet and related operator-learning architectures. Section~\ref{s:theory} establishes the consistency of our neural-network estimator of the regression operator $G_0$ with respect to predictive risk, under general assumptions on both the regression model and the functional data. To the best of our knowledge, this is the first result 
to establish predictive-risk consistency for regression operator estimation using a neural-network approach. 
Section~\ref{s:implementation} addresses practical implementation, with particular attention to functional data observed on irregular grids. Section~\ref{s:simulation} reports the results of a thorough simulation study demonstrating the effectiveness of our approach and makes comparisons with the neural operator approach of \cite{rao2023modern}. Finally, Section~\ref{s:argo} presents a real-data application to the Argo oceanographic dataset. 
All the proofs and technical details are deferred to the Supplement at the end of the paper
so as not to interrupt the flow of the main text. Labels and equation numbers in the Supplement will be prefixed with $S$ for clarity.

\section{Separable Neural Operators}\label{s:SNO}

The purpose of this section is to formalize the separable neural operator
class motivated by the representation in~\eqref{e:separable}. This class provides the architectural foundation for both the theoretical sieve estimator studied in
Section~\ref{s:theory} and the practical implementation, simulation studies, and real-data application described in later sections.
This leads to the following definition.
\begin{defn}[Separable Neural Operator]\label{d:SNO} A \emph{separable neural operator} (SNO) $G_{\theta,\eta}$ is an operator from $\bbH_1$ to $\bbH_2$, two spaces of functions on $[0,1]$, that has the finite-rank separable representation
\[
G_{\theta,\eta}(x)(t)
= \sum_{k=1}^p c_k(x;\theta)\,\phi_k(t;\eta), \qquad t\in[0,1],
\]
where both $c_k(x;\theta)$ and $\,\phi_k(t;\eta)$ are modeled by neural networks, referred to as coefficient and basis networks, respectively. 
\end{defn}

The representation in Definition~\ref{d:SNO} separates the dependence on the input function $x$ from the dependence on the output argument $t$, with the maps $c_k(x;\theta)$ playing the role of input-dependent coefficient functions, while the functions $\phi_k(t;\eta)$ constituting a learnable output basis. In view of \eqref{e:separable}, 
SNO provides a general and flexible separable framework for approximating operators.
DeepONet~\citep{lu2021learning} is a prominent example of this type of architecture, with its branch network encoding the input function and its trunk network encoding the query location. Another closely related separable architecture is the Separable Operator Networks (SepONet) of~\citet{yu2024separable}, which uses a separable branch-trunk representation for physics-informed learning of parametric PDE operators.
Despite these architectural similarities, the present paper uses the separable structure for a different statistical objective, i.e., regression-operator estimation in function-on-function regression. This approach retains the interpretability and dimension-reduction advantages of classical basis-based FDA methods while accommodating a substantially more flexible class of estimators for the regression operator. Subsequent sections establish theoretical guarantees for this framework for the regression problem and demonstrate its empirical performance in simulation studies and a data analysis.

The approximation-theoretic foundation of SNO can be traced back to \cite{chen1995universal}. Theorem~5 therein establishes a universal approximation theorem for nonlinear operators using finite sums of separable
neural-network components. A version of that result tailored to our setting can be stated as follows. Let $G: V \to C[0,1]$ be a continuous mapping, where $V$ is a compact subset of $C[0,1]$. Then, for any Tauber-Wiener (TW) activation function $\sigma$ in 
the sense of \cite{chen1995universal} (discussed below) and any $\epsilon>0$, there exist positive integers $m,p,q$, constants $c_i^k, \xi_{i j}^k, \theta_i^k, w_k, \zeta_k\in\bbR$ and locations $s_1,\ldots,s_m\in [0,1]$ such that
\begin{align} \label{e:chen_chen}
\sup_{x\in V, t\in [0,1]}\left|G(x)(t)-\sum_{k=1}^p c_k(x)\,\phi_k(t)\right| < \epsilon,
\end{align}
where 
\[
c_k(x)
:= \sum_{i=1}^{q} c_i^k \,\sigma\!\left(\sum_{j=1}^m \xi_{ij}^k x(s_j)+\theta_i^k\right),
\qquad
\phi_k(t)
:= \sigma(w_k t+\zeta_k).
\]
Consequently, the coefficient components $c_k(x)$ and the
basis components $\phi_k(t)$ can be combined to yield finite-rank separable 
representations that uniformly approximate continuous nonlinear operators on compact subsets of $C[0,1]$. 

The TW condition imposed on the activation function $\sigma$ ensures that the corresponding single-hidden-layer neural-network class is sufficiently rich for universal approximation of continuous nonlinear operators on compact subsets of $C[0,1]$. \cite{chen1995universal} also proved the fundamental result that a continuous activation function defining a (regular) tempered distribution \citep[cf.\ Chapter 9 of][]{folland1999} belongs to the TW class if and only if it is not a polynomial. Thus, standard activations such as ReLU, sigmoid and $\tanh$ all qualify as TW functions.

The separable approximation in~\eqref{e:chen_chen} also provides a natural
bridge to the richer SNO parameterizations used in this paper. Instead of using the
specific shallow forms appearing in the approximation theorem, one may
parameterize the coefficient maps $c_k$ and basis functions $\phi_k$ by more flexible neural networks, while retaining the same separable structure. This extension was adopted by
DeepONet~\citep{lu2021learning} through their branch-trunk formulation, where $c_k$ and $\phi_k$ are modeled by deep neural networks. DeepONet and related neural operators have been applied successfully to a wide range of operator-learning problems, including fractional Laplacians and solution operators associated with stochastic differential equations~\citep[cf.][]{li2020fourier,li2020neural,kovachki2023neural}.

A limitation of the approximation result~\eqref{e:chen_chen} is that it does not provide explicit information on the complexity of the operator network, including the admissible choices of $p,q,m$ and the locations $s_j$.
This creates challenges for both the implementation and theoretical analysis. In the context of operator learning, progress on the approximation complexity of DeepONet, including how the network size scales with the approximation error, has been made in~\cite{lanthaler2022error}, ~\cite{weihs2025deep}, and~\cite{weihs2026multiple}.

In practice, the locations $s_j$ can be taken to be the points at which $x$ is observed; these are often referred to as sensor locations in operator learning literature. For regression operator estimation, these locations may or may not be identical for different curves. When the input grids vary across curves, the sampled values $x(s_j)$ may be replaced by suitable features, such as the coefficients from a basis expansion. This, along with other implementation considerations, will be discussed in Section~\ref{s:implementation} and illustrated in Sections~\ref{s:simulation} and~\ref{s:argo}. 

\section{Theoretical Results} \label{s:theory}
This section introduces an SNO sieve estimator of the regression operator and establishes its consistency. 
The regression model assumptions and the sampling scheme for the functional predictor and response are described in Section \ref{ss:assumption}. The SNO sieve network class from which the estimator is obtained is described in Section~\ref{ss:neural_network}. Section~\ref{ss:main} then defines the estimator and its predictive risk, and states the main consistency theorem. All the proofs and technical details are deferred to the Supplement.



Before proceeding, we introduce the following function and operator norms that will be
used throughout the theoretical development.
\begin{itemize}
\item 
For a function $x\in C[0,1]$ and a subset
$S\subset[0,1]$, define 
$\|x\|_{\sup,S} := \sup_{s\in S}|x(s)|$ and $
\|x\|_{\sup} := \|x\|_{\sup,[0,1]}$.

\item
For an operator $G:C[0,1]\to C[0,1]$ and $u\ge 1$, its sup norm over the truncated input domain $\{x\in C[0,1]:\|x\|_{\sup}\le u\}$ is defined as
\begin{align} \label{e:infinity_norm}
\|G\|_{u,\infty}:=\sup_{\|x\|_{\sup}\le u,\; t\in[0,1]} |G(x)(t)|.
\end{align}
\end{itemize}

\subsection{Model assumptions and sampling design} \label{ss:assumption}

Consider the function-on-function regression model:
\begin{align} \label{e:model}
Y(t) = G_0(X)(t) + \varepsilon(t), \ t\in [0,1],
\end{align}
where, for the moment, we impose the basic assumption that $\{X(t), t \in [0,1]\}$ and $\{\varepsilon(t), t \in [0,1]\}$ are independent zero-mean Gaussian processes with sample paths in $\bbL^2[0,1]$ \citep[cf.\ Chapter 7 of][]{hsing2015theoretical}.
The mapping $G_0: \bbL^2[0,1]\to \bbL^2[0,1]$ may be nonlinear and will be referred to as the regression operator. Additional assumptions on different aspects of the model will be introduced below. Our goal is to estimate $G_0$. Although we focus on the simplified single-predictor model~\eqref{e:model} rather than the more general model~\eqref{e:model_0} for convenience, the main ideas developed under~\eqref{e:model} extend naturally to settings with multiple predictors. We will demonstrate this in the Argo data analysis in Section~\ref{s:argo}, where functional predictors, temperature and salinity profiles, are combined with scalar covariates, day of the year and spatial location, to predict the dissolved oxygen level profile.

We begin by describing the second-order behavior of $X$ and $\varepsilon$. Define the covariance kernels for the predictor and error processes:
$$
C_X(s,t) = \E[X(s)X(t)]
\quad\mbox{and}\quad C_\varepsilon(s,t) = \E[\varepsilon(s)\varepsilon(t)], 
\quad s, t\in [0,1].
$$
In addition, the canonical metric associated with $X$ is defined by
\begin{align} \label{e:rho}
d_X(s,t) = \E^{1/2}[(X(s)-X(t))^2],\quad s, t\in [0,1].
\end{align}

\begin{assumption}\label{a:process}
\begin{itemize}
\item [(i)] The canonical metric $d_X (s,t)$ satisfies a H\"older-type condition; i.e., there exist finite constants $\lambda>0$ and $0<\beta\le 1$ such that
\begin{align} \label{e:rho_1}
d_X (s,t) \le \lambda |s-t|^\beta, \qquad s,t \in [0,1].
\end{align}
\item [(ii)] The error covariance kernel $C_\varepsilon(s,t)$ is uniformly bounded on $[0,1]\times [0,1]$.
\end{itemize}
\end{assumption}

\begin{remark} \label{r:process}
The canonical-metric condition \eqref{e:rho_1} holds for a large class of models, including, e.g., fractional Brownian motions and Mat\'ern processes, and plays several roles in our theoretical analysis. Some implications of \eqref{e:rho_1} are as follows.
\biz\im[(a)]
First, \eqref{e:rho_1} implies that $X$ admits a modification with continuous sample paths
almost surely \citep[cf. Theorem 1.3.5 of][]{adler2007random}. Thus, we henceforth take the domain of $G_0$ to be $C[0,1]$, the space of continuous functions. 
\im[(b)]
The condition \eqref{e:rho_1} also provides control over the increments of $X$, which will be used to derive a concentration bound (cf. Lemma~\ref{l:GaussianKriging}) for the kriging residual when reconstructing the full trajectory of $X$ from data observed on a 
finite grid.
\im[(c)] Under our Gaussian framework, \eqref{e:rho_1} implies that $\E\|X\|_{\sup}<\infty$ by standard entropy bounds \citep[e.g., Theorem 1.3.3 of][]{adler2007random}. Thus, the Borell--TIS inequality \citep[Theorem 2.1.1 of][]{adler2007random}
applies to $\|X\|_{\sup}$ and yields, in particular, that $\|X\|_{\sup}$ has finite moments of all orders.
\eiz
\end{remark}

The following assumption introduces the required smoothness properties of $G_0$.

\begin{assumption}\label{a:G_0}
The regression operator $G_0$ is a mapping from $C[0,1]$ to $C[0,1]$ and satisfies the following conditions. 
\begin{itemize}
\item [(i)] 
$G_0(0)=0$. There exist constants $\alpha\in (0,1]$, $\nu>0$, and
$C_0<\infty$ such that, for every $u\ge 1$,
\begin{align} \label{e:smooth_G0}
\|G_0(x_1)-G_0(x_2)\|_{\sup}
\le C_0 u^{\nu} \|x_1-x_2\|_{\sup}^{\alpha},
\qquad 
\|x_1\|_{\sup},\|x_2\|_{\sup}\le u .
\end{align}
Moreover, $G_0$ satisfies the polynomial growth condition
\begin{align} \label{e:growth_G0}
\|G_0(x)\|_{\sup}
\le C_0(1\vee\|x\|_{\sup}^{\nu})
\qquad x\in C[0,1].
\end{align}
Without loss of generality, the same constant $C_0$ and exponent $\nu$ are used in the two bounds, since they can be enlarged to dominate the corresponding constants and polynomial orders.

\item[(ii)] For each $x\in C[0,1]$, $G_0(x)(\cdot)$ is absolutely continuous on $[0,1]$ and its derivative, denoted by $G_0(x)'(\cdot)$, satisfies $\E \left[\esssup_{t\in (0,1)} |G_0(X)'(t)|^2\right] < \infty$. 
\end{itemize}
\end{assumption}  

\begin{remark} \label{r:G_0}
The conditions imposed on $G_0$ in Assumption~\ref{a:G_0} are fairly mild. 
\biz
\im [(a)] 
The condition $G_0(0)=0$ corresponds to a zero-intercept assumption $\E(Y|X=0) = 0$, which is imposed mainly for the convenience of the theoretical analysis. By (c) of Remark \ref{r:process}, the growth condition \eqref{e:growth_G0} on $G_0$ implies that
\begin{align} \label{e:G_0_sup}
\E[\|G_0(X)\|_{\sup}^p] \le C_0^p\, \E\left[\{1\vee\|X\|_{\sup}^{\nu}\}^p\right]< \infty \quad\mbox{for all $p>0$.}
\end{align}

\im[(b)]
The smoothness condition \eqref{e:smooth_G0} on $G_0$ holds for many commonly studied regression operators in the literature, such as integral operators of the form $G_0(x)(t) = \int_0^1 w(s,t)x(s)ds$, 
as well as index-type operator models of the form
\[
G_0(x)(t) = h\left(\langle \beta_1(\cdot,t),x(\cdot)\rangle,\ldots,
\langle \beta_d(\cdot,t),x(\cdot)\rangle\right),
\]
provided that the kernel $w(s,t)$, the link function $h$ and the index functions $\beta_j(s,t)$ satisfy appropriate smoothness conditions.
\eiz
\end{remark}

Next, we describe the sampling design for the functional data. Assume that an iid.\ sample $(X_i,Y_i)$, $i\in [n]$, is available for estimating $G_0$. However, the functional data are not continuously observed. Instead, we observe
\begin{align*}
X_{i,j} := X_i(s_j)\quad\mbox{and}\quad Y_{i,\ell} := Y_i(t_\ell) + e_{i,\ell}, \quad i \in [n], j\in [J], \ell\in [L],
\end{align*}
where $e_{i,\ell}$ are measurement errors, $J$ and $L$ are finite positive integers, and the non-random design points $s_j,t_\ell\in(0,1)$ are
ordered so that $s_j< s_{j+1}$ and $t_\ell < t_{\ell+1}$.
In the asymptotic analysis, the grids are allowed to depend on $n$; i.e., $s_j=s_{n,j}$ and $t_\ell=t_{n,\ell}, J=J_n$ and $L=L_n$, with $J_n\to\infty$ and $L_n\to\infty$ as $n\to\infty$. For convenience, denote the input and
output grids as $S_n:=\{s_{n,j}, j \in [J_n]\}$ and $T_n:=\{t_{n,\ell}, \ell \in [L_n]\}$. We impose the following sampling-design assumption.

\begin{assumption}\label{a:sampling}
\begin{itemize}
\im[(i)] The input grids are nested, i.e., $S_n\subset S_{n+1}$ for all $n$. 
\im[(ii)] 
The input-grid size satisfies $\log J_n = o(n)$. Moreover, the maximal spacings $\rho_n:=\max_{0\le j\le J_n}(s_{n,j+1}-s_{n,j})\to 0$ and $\gamma_n:=\max_{0\le \ell\le L_n} (t_{n,\ell+1}-t_{n,\ell}) \to 0$ as $n\to\infty$, where $s_{n,0}=t_{n,0}:=0$ and $s_{n,J_n+1}=t_{n,L_n+1}:=1$. 
\im [(iii)] The measurement errors $e_{i,\ell}$ are independent Gaussian random variables with mean zero and variances uniformly bounded by some $\sigma_e^2 < \infty$. They are also independent of the functional data $X_i, Y_i$, $i \in [n]$. 
\end{itemize}
\end{assumption}

\begin{remark}\label{r:grid}
\begin{itemize}
\im[(a)]
The assumption that the grid points do not depend on $i$ is imposed to simplify the theoretical analysis. 
In the implementation described in Section~\ref{s:implementation} and in the empirical studies in Sections~\ref{s:simulation} and~\ref{s:argo}, the curves are allowed to be observed on irregular, curve-specific grids. 
\im[(b)]
Condition (ii) of Assumption~\ref{a:sampling} describes a ``dense functional data'' regime, where the number of observation points on each curve tends to infinity and the maximal spacing between adjacent sampling locations tends to zero. The condition $\log J_n = o(n)$ is very mild in functional data analysis, as a common assumption is  $J_n\asymp n^c$ for some finite $c>0$.
\end{itemize}
\end{remark}


\subsection{Neural-network assumptions}
\label{ss:neural_network}

We now introduce the sieve class used in the theoretical analysis, together with the corresponding assumptions on its network parameters. 
Let $\sigma$ be a Lipschitz-continuous TW function.
As explained in Section \ref{s:SNO}, this includes standard activations such as the sigmoid, $\tanh$, and ReLU. For each $n\in\mathbb{N}$ and $u\ge 1$, define a class $\CG_{n,u}$ of operator-valued
maps $G$ from $C[0,1]$ to $C[0,1]$ by
\begin{align} \label{e:Gn}
\CG_{n,u}:=\Bigg\{
& G(x)(t) = \ind{\|x\|_{\sup,S_n} \le u} \sum_{k=1}^{p_n} \sum_{i=1}^{q_n} c_i^k \sigma\left(\sum_{j=1}^{|S_n|} \xi_{i j}^k x(s_{n,j})+\theta_i^k\right)
\sigma\left(w_k  t+\zeta_k\right):  \\
& \|G\|_{u,\infty} \le Bu^{\nu}, 
c_i^k, \xi_{i j}^k, \theta_i^k, \zeta_k, w_k \in [-b_n,b_n], \mbox{ and }  \sum_{j=1}^{|S_n|} \ind{\xi_{i j}^k\not=0} \le r_n \Bigg\},
\nonumber
\end{align}
where $\nu$ is the exponent defined in Assumption~\ref{a:G_0}, and $B$ is a
fixed constant chosen at least as large as the constant $C_0$ appearing
there. The sequences $b_n,p_n,q_n,r_n$ are assumed to satisfy $1 \le b_n,p_n,q_n,r_n \uparrow\infty$ slowly, as specified later in Assumption~\ref{a:network}. Also, let $\CN_{n,u,\delta}$ denote the $\delta$-covering number of $\CG_{n,u}$ in $\|\cdot\|_{u,\infty}$, which plays an important role in the asymptotic theory. Some discussion of $\CN_{n,u,\delta}$ can be found in part (c) of Remark \ref{r:cover_number} below.

Note that each $G\in\mathcal{G}_{n,u}$ admits a separable rank-$p_n$ representation of the form
$G(x)(t)=\sum_{k=1}^{p_n} c_k(x)\,\phi_k(t)$,
where
\[
c_k(x)
= \ind{\|x\|_{\sup,S_n}\le u}\sum_{i=1}^{q_n}
c_i^k\,\sigma\!\left(\sum_{j=1}^{|S_n|}\xi_{ij}^k x(s_{n,j})+\theta_i^k\right), \qquad
\phi_k(t)=\sigma(w_k t+\zeta_k).
\]
Thus, the ``coefficient'' component $c_k(x)$ is a single-hidden-layer network 
in the input $x$, while the ``basis'' component $\phi_k(t)$ is a single affine 
unit in $t$ followed by the activation $\sigma$. Hence, the operator class 
$\CG_{n,u}$ has a shallow separable architecture. It is analogous to 
the rank-$p_n$ decomposition representation used in DeepONet and SNO, but is deliberately kept simpler in order to facilitate and streamline the theoretical analysis.

As reviewed in Section~\ref{s:SNO}, this architecture is motivated by Theorem~5 of \cite{chen1995universal}
which provides approximation guarantees for a continuous operator on a compact set of $C[0,1]$.
The restrictions imposed in $\mathcal{G}_{n,u}$, which lead to a relatively small and sparse 
network class, may appear somewhat stringent. Such constraints, however, are standard in the 
literature on sieve-based theoretical 
analysis; see, e.g., the sieve framework for
deep neural networks in~\cite{schmidt2020nonparametric}.
In contrast, the practical implementation considered later in this paper allows both the coefficient and basis components of $G(x)(t)$ to be modeled by more general deep neural networks; see also Remark~\ref{r:main_theorem}(b) below.

Before stating the assumptions on the network parameters of $\CG_{n,u}$, we record a few observations in the following remark.

\begin{remark} \label{r:cover_number}
\begin{itemize}
\item [(a)] 
Consistent with the sampling design, each $ G(x)\in \CG_{n,u}$ depends on $x$ only through the sampled values $x(s)$, $s \in S_n$.
Moreover, the support of any operator $G\in\CG_{n,u}$ is contained in $\{x\in C[0,1]:\|x\|_{\sup,S_n}\le u\}$, which suggests that $G_0$ will be estimated on that truncated domain. This point will be elaborated on further in Section \ref{ss:main}.

\item[(b)] The growth condition \eqref{e:growth_G0} in~Assumption~\ref{a:G_0}(i) implies that
\begin{align} \label{e:G0u_sup}
\|G_0\|_{u,\infty} \le C_0\sup_{\|x\|_{\sup} \le u}\left(1\vee\|x\|_{\sup}^{\nu} \right) \le C_0 u^{\nu}.
\end{align}
This motivates the bound
$\|G\|_{u,\infty}\le B u^{\nu}$ in~\eqref{e:Gn}.
Without such a bound, the sieve class would be unnecessarily large for the estimation problem, which would complicate the proofs.
Since the true operator $G_0$ is unknown, the constants $C_0$ and $\nu$ are likewise unknown. However, as will be seen in the proofs, the particular choice of the bound $B u^{\nu}$ is not essential. It may be replaced by another deterministic bound
that dominates the growth of $G_0$ on the truncated domain.
This makes the restriction less stringent.
\item[(c)]
In the proofs, we require $(\log \CN_{n,u,\delta})/n$ to tend to zero at a suitable rate for some choice of $\delta=\delta_n$.
The computation of $\CN_{n,u,\delta}$ is given in Lemma \ref{l:covering} of the Supplement. Focusing on the dominant terms of~\eqref{e:metricentropy} in Lemma~\ref{l:covering}, and using the crude bound $\binom{J}{r} \leq \frac{J^r}{r!} \leq \frac{J^r}{(r / e)^r}=\left(\frac{e J}{r}\right)^r$ (by Stirling's formula), we obtain, 
\begin{align}\label{e:metric_entropy}
\log \CN_{n,u,\delta}
\lesssim 
p_nq_nr_n\log\left(\frac{b_n J_n p_nq_nu\, \sigma_{b_n(r_n u +1)}}{\delta}\right), 
\end{align}
where $\sigma_s = \sup_{|t|\le s}|\sigma(t)|$.
Here $a_n\lesssim b_n$ means that $a_n\le Cb_n$ for some constant $C<\infty$ independent of $n$.
\end{itemize}
\end{remark}

Based on~\eqref{e:metric_entropy}, we impose
the following growth conditions on the network parameters in order to control
the complexity of the sieve class $\CG_{n,u}$. 

\begin{assumption}
\label{a:network}
For each fixed $u\ge1$, the network parameters $b_n,p_n,q_n,r_n\uparrow\infty$ and satisfy
\begin{align} \label{e:complexity}
p_nq_nb_n^2\gamma_n \sigma_{b_n(r_n u +1)}\to 0, ~~\mbox{and}~~~
\frac{p_nq_nr_n\log\left(b_n J_n p_nq_n \sigma_{b_n(r_n u +1)}\right)}{n} \to 0, 
\end{align}
as $n\to\infty$, where $\gamma_n$ is as defined in Assumption~\ref{a:sampling}. 
\end{assumption}

\begin{remark} \label{r:complexity}
If $\sigma$ is bounded, as in the case of the sigmoid or $\tanh$ activation, then the factor $\sigma_{b_n(r_n u +1)}$ can be absorbed into the constant in both conditions of~\eqref{e:complexity}. If $\sigma$ is unbounded, such as ReLU, then the Lipschitz-continuity assumption of $\sigma$ implies that
$
\sigma_{b_n(r_n u +1)}\le |\sigma(0)|+C_\sigma b_n(r_n u +1)
$,
where $C_\sigma$ is the Lipschitz constant. Consequently, for fixed $u\ge1$, the factor $\sigma_{b_n(r_nu+1)}$ may be replaced by $b_n r_n$ in~\eqref{e:complexity}.
\end{remark}

\subsection{Main results} \label{ss:main}

We now define the empirical risk minimizer over the sieve class $\CG_{n,u}$. Recall the notation of the input and output grids in  Assumption \ref{a:sampling}. In particular, $\{t_\ell\}_{\ell=1}^L$ denotes the common output grid, which is allowed to be irregular. Note that, for convenience, we suppress the dependence of $t_\ell$ and $L$ on $n$. Define the cell-length quadrature weights:
\begin{equation}\label{e:w_1}
w_1=\frac{t_1+t_2}{2},\quad
w_\ell=\frac{t_{\ell+1}-t_{\ell-1}}{2}~~ ({\rm for}~1<\ell<L),\quad
w_L=1-\frac{t_{L-1}+t_L}{2}.
\end{equation}
Also, recall that $\CG_{n,u}$ contains operators with supports in $\{x:\|x\|_{\sup,S_n}\le u\}$. 
For fixed $u\ge 1$, define
\[V_{i,n,u}:=\ind{\|X_i\|_{\sup,S_n}\le u}.\]
The estimator studied in our theoretical analysis is 
\begin{align} \label{e:G_hat_G_tilde}
\hat G_{n,u} = \argmin_{G\in\CG_{n,u}} \frac{1}{n}\sum_{i=1}^n V_{i,n,u} \sum_{\ell=1}^{L} w_{\ell}\left\{Y_{i,\ell}-G(X_{i})(t_{\ell})\right\}^2.
\end{align}
Thus, if $V_{i,n,u}=0$ or $\|X_i\|_{\sup,S_n}> u$, then the pair $(X_i,Y_i)$ does not contribute to the estimation of $\hat G_{n,u}$. 
Restricting estimation on compact domains is a common practice in classical nonparametric regression theory \citep[see, e.g.,][]{stonerates}. Here, $\hat G_{n,u}$ focuses on the estimation of $G_0$ on the truncated domain $\{x:\|x\|_{\sup,S_n}\le u\}$ for fixed $u\ge 1$. This domain is not relatively compact in $C[0,1]$, but the restriction is crucial in allowing us to have tight control of the properties of the estimator.
The extension from fixed $u$ to a sequence
$u=u_n\uparrow\infty$ is addressed later in Remark~\ref{r:main_theorem} below.
Define, for $u\ge 1$, the truncated operators
\begin{align} \label{e:G0u}
G_{0,u}(x)  = \ind{\|x\|_{\sup} \le u} G_0(x), \qquad
G_{0,n,u}(x)  = \ind{\|x\|_{\sup, S_n} \le u} G_0(x),
\end{align}
where the $G_{0,u}$ can be viewed as the ideal 
estimation target for $\hat G_{n,u}$, while $G_{0,n,u}$ is its grid-based counterpart, using the same discrete-grid truncation as in the sieve class $\CG_{n,u}$. Lemma \ref{l:G0_moment} of 
the Supplement shows that $\limsup_{n\to\infty}\E\left[\left\|G_{0,u}(X)-G_{0,n,u}(X)\right\|_{\bbL^2}^2\right]\to 0$ exponentially fast in $u$.
Thus, our main goal is to study the sieve estimator $\widehat G_{n,u}$ as an estimator of  $G_{0,n,u}$ for any fixed $u\ge 1$. To assess the estimation error, we define the predictive risk:
\begin{align} \label{e:pred_risk}
\CR\left(\wh G_{n,u},G_{0,n,u}\right) := \E\left[\left\|\wh G_{n,u}(X)-G_{0,n,u}(X)\right\|_{\bbL^2}^2\right],
\end{align}
where $\|\cdot\|_{\bbL^2}$ is the standard $L^2$-norm on $\bbL^2[0,1]$, and the expectation is taken with respect to both the training sample and an independent copy $X$ of the predictor.

The main theoretical result of this paper is stated below.

\begin{thm} \label{t:rate}
Assume that Assumptions \ref{a:process}--\ref{a:network} hold.
Then, for every fixed $u\ge 1$,
\begin{align} \label{e:goal}
\lim_{n\to\infty} \CR\left(\wh G_{n,u},G_{0,n,u}\right) = 0.
\end{align}

\end{thm}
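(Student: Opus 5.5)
The plan is a standard sieve-estimation argument: split the predictive risk into an approximation error and a stochastic (estimation) error, handling the discrete grids with quadrature and kriging arguments.

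\emph{Step 1 (deterministic decomposition).} Write the empirical criterion in \eqref{e:G_hat_G_tilde} as $\hat Q_n(G)$ and its population analogue as $Q_n(G)=\E[V_{n,u}\|Y-G(X)\|_{\bbL^2}^2]$, where $V_{n,u}=\ind{\|X\|_{\sup,S_n}\le u}$. Since every $G\in\CG_{n,u}$ vanishes off $\{\|x\|_{\sup,S_n}\le u\}$ and $\varepsilon$ is independent of $X$ with mean zero, we have $Q_n(G)-Q_n(G_{0,n,u})=\CR(G,G_{0,n,u})$. Let $G_n^*\in\CG_{n,u}$ be a near-best approximant of $G_{0,n,u}$. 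Then
\[
\CR(\hat G_{n,u},G_{0,n,u})\le \CR(G_n^*,G_{0,n,u})+2\sup_{G\in\CG_{n,u}}|\hat Q_n(G)-Q_n(G)-\{\hat Q_n(G_{0,n,u})-Q_n(G_{0,n,u})\}|,
\]
up to quadrature discrepancies. Those discrepancies come from replacing $\int_0^1$ by $\sum_\ell w_\ell$ at the grid $T_n$, and they are controlled as follows.
\begin{itemize}
\item For network terms: $|\partial_t G(x)(t)|\lesssim p_nq_nb_n^2\sigma_{b_n(r_nu+1)}$, so the Riemann error is $O(p_nq_nb_n^2\gamma_n\sigma_{b_n(r_nu+1)})\to0$ by the first part of \eqref{e:complexity}.
\item For $G_0(X)$: Assumption~\ref{a:G_0}(ii) gives an $O(\gamma_n)$ error.
\item For $\varepsilon$: the error enters only through cross terms with mean zero.
\end{itemize}

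\emph{Step 2 (approximation error).} The goal is $\CR(G_n^*,G_{0,n,u})\to0$. Since $\{\|x\|_{\sup}\le u\}$ is not compact, I would fix $\epsilon>0$ and proceed in three moves.
\begin{itemize}
\item Choose a compact set $K\subset C[0,1]$ with $\pr(X\notin K)<\epsilon$, which exists because $X$ is a tight Gaussian element. Using Assumption~\ref{a:process} and Borell--TIS, $K$ can be taken to be a ball in a H\"older class.
\item Reduce $G_0$ to its values through a finite sensor set: $G_0(x)\approx G_0(\Pi_m x)$, where $\Pi_m$ is piecewise-linear interpolation at the first $m$ grid points. Nestedness (Assumption~\ref{a:sampling}(i)) guarantees these points lie in every $S_n$ for large $n$, and the H\"older bound \eqref{e:smooth_G0} plus Lemma~\ref{l:GaussianKriging}-type control of $\|X-\Pi_mX\|_{\sup}$ bound the error.
\item Apply \eqref{e:chen_chen} to the continuous map $G_0\circ\Pi_m$ on the compact set $\Pi_m(K)$, which gives a network with fixed $p,q,m$ and sparsity $m\le r_n$. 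Its parameters lie in $[-b_n,b_n]$ eventually because $b_n,p_n,q_n,r_n\uparrow\infty$.
\end{itemize}
Clipping to meet $\|G\|_{u,\infty}\le Bu^\nu$ is the delicate part, since the sup is over all $x$, not just over $K$. I would try to bypass it by approximating a bounded truncation of $G_0$ and using $B\ge C_0$. Off $K$, the contribution is bounded via the moment bound \eqref{e:G_0_sup} and Cauchy--Schwarz, giving $O(\epsilon^{1/2})$.

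\emph{Step 3 (stochastic error).} Here I need a uniform law of large numbers over $\CG_{n,u}$.
\begin{itemize}
\item On the support all functions are bounded by $Bu^\nu$, so the terms $V_i\sum_\ell w_\ell (G(X_i)(t_\ell)-G_{0,n,u}(X_i)(t_\ell))^2$ are bounded.
\item The cross terms $V_i\sum_\ell w_\ell(\varepsilon_i(t_\ell)+e_{i\ell})(G-G_{0,n,u})(X_i)(t_\ell)$ are conditionally Gaussian with variance bounded via Assumption~\ref{a:process}(ii) and the bound $\sigma_e^2$.
\end{itemize}
Taking a $\delta_n$-net in $\|\cdot\|_{u,\infty}$, Hoeffding's inequality for the bounded part, a sub-Gaussian tail bound for the noise part, and a union bound give a deviation of order $\sqrt{\log\CN_{n,u,\delta_n}/n}+\delta_n$. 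This tends to zero by \eqref{e:metric_entropy} and the second part of \eqref{e:complexity}, with $\delta_n\to0$ slowly.

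\emph{Main obstacle.} I expect Step 2 to be the hardest: producing an element of the constrained, sparse, bounded class $\CG_{n,u}$ that approximates $G_{0,n,u}$ in $\bbL^2$-risk over a non-compact truncated domain, while respecting the global sup-norm bound. The combination of the compact-set reduction, the finite-sensor interpolation, and the handling of the clipping is where the argument is most likely to need care.
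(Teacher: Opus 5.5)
Your Steps 1 and 3 are sound, modulo one detail noted at the end, and they are simpler than the paper's probabilistic argument. A single ERM oracle inequality with a non-localized deviation of order $\sqrt{\log\CN_{n,u,\delta_n}/n}+\delta_n$ is enough for consistency. The paper instead chains three comparisons: $\CR\to R$ (quadrature), then $R\to\wh R_n$ (ghost sample, Bernstein and a union bound with localized radii, following \citet{schmidt2020nonparametric}), and finally bounds $\wh R_n$ via the basic inequality and a Gaussian maximal inequality.

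\textbf{The gap in Step 2.} The genuine gap is in Step 2, exactly where you flag it, and the bypass you suggest does not close it. The supremum defining $\|G\|_{u,\infty}$ runs over the ball $\{\|x\|_{\sup}\le u\}$, which no compact $K$ covers. A network of the form \eqref{e:Gn} sees $x$ only through its sensor values $v=(x(s^{(1)}),\ldots,x(s^{(m)}))$, where $s^{(1)},\ldots,s^{(m)}$ are the grid points used by $\Pi_m$. So the constraint $\|G\|_{u,\infty}\le Bu^{\nu}$ is a condition on the network over the whole cube $[-u,u]^m$: every point of the cube is the sensor vector of the piecewise-linear $\Pi_m v$, whose sup norm is at most $u$. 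The sensor vectors of a H\"older ball need not fill that cube. Hence, after applying \eqref{e:chen_chen} on $\Pi_m(K)$, the network is uncontrolled on much of the truncated domain. The class $\CG_{n,u}$ has no clipping layer, and approximating a bounded truncation of $G_0$ does not help, because the approximant is still controlled only where you approximated. Without $G_n^*\in\CG_{n,u}$ you lose both the comparison $\wh Q_n(\wh G_{n,u})\le\wh Q_n(G_n^*)$ and any bound on the network's contribution off $K$.

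\textbf{The repair.} The missing idea is the paper's: take $K$ out of the approximation step and use compactness of the finite-dimensional cube.
\begin{itemize}
\item Apply Theorem~5 of \cite{chen1995universal} with the finite sensor set $\{s^{(1)},\ldots,s^{(m)}\}$ and the compact set $[-u,u]^m$ to the continuous map $v\mapsto G_0(\Pi_m v)$. This also places the sensors at grid points, which \eqref{e:chen_chen} on $\Pi_m(K)$ does not guarantee.
\item Since $\|\Pi_m v\|_{\sup}\le\max_j|v_j|\le u$, the target is bounded by $C_0u^{\nu}$ on the whole cube. An $\eta$-approximant $N$ therefore gives $G^*(x):=\ind{\|x\|_{\sup,S_n}\le u}N(v)$ with $\|G^*\|_{u,\infty}\le C_0u^{\nu}+\eta$. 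Take $\eta$ small if $B>C_0$, or approximate $(1-\kappa)G_0\circ\Pi_m$ if $B=C_0$.
\item Nestedness gives $G^*\in\CG_{n,u}$ for large $n$. Moreover $\CR(G^*,G_{0,n,u})\le 2\eta^2+2\E\|G_0(\Pi_mX)-G_0(X)\|_{\sup}^2$ uniformly in $n$.
\item The last term tends to zero as $m\to\infty$ by path continuity, continuity of $G_0$, and dominated convergence with \eqref{e:G_0_sup}. No H\"older ball is needed.
\end{itemize}
Here your $\Pi_m$ is better suited than the paper's kriging reconstruction $\wt x_m$. Kriging can exceed $u$ in sup norm between grid points, so the bound $\|G_{m,u,\epsilon}\|_{u,\infty}\le Bu^{\nu}$ is not automatic in the paper's construction.

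\textbf{The detail in Step 3.} $G_{0,n,u}(X_i)$ is not bounded by $C_0u^{\nu}$ on $\{\|X_i\|_{\sup,S_n}\le u<\|X_i\|_{\sup}\}$, so the squared terms are not literally bounded. Expand the square: the unbounded piece $V_{i,n,u}\|G_0(X_i)\|_L^2$ does not depend on $G$. Then truncate $\|G_0(X_i)\|_{\sup}$ in the cross term before applying Hoeffding.
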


Some remarks are in order.
\begin{remark} \label{r:main_theorem}

\biz
\im [(a)]
Note that 
\begin{align*}
\CR\left(\wh G_{n,u},G_{0}\right)&:= \E\left[\left\|\wh G_{n,u}(X)-G_{0}(X)\right\|_{\bbL^2}^2\right] \\
& \le 2\CR\left(\wh G_{n,u},G_{0,n,u}\right) + 2 \E\left[\left\|G_{0,n,u}(X)-G_{0}(X)\right\|_{\bbL^2}^2\right].
\end{align*}
Consequently, by Theorem \ref{t:rate},
\begin{align*} 
\limsup_{n\to\infty} \CR\left(\wh G_{n,u},G_0\right) \le 2 \E\left[\ind{\|X\|_{\sup} > u}\cdot \|G_0(X)\|_{\sup}^2\right],
\end{align*}
where the right-hand side is bounded by $2\Psi^2(u)$ defined in Lemma~\ref{l:G0_moment} of Supplement,
which tends to zero exponentially fast as $u\uparrow\infty$.
Thus, Theorem~\ref{t:rate} has the consequence that one may choose a sequence $u_n\uparrow\infty$ such that
$\lim_{n\to\infty}\CR\left(\wh G_{n,u_n},G_{0}\right) = 0$. 
\im[(b)] Theorem \ref{t:rate} does not provide a rate for the predictive risk for $\hat G_{n,u}$. The reason is essentially that our proof closely aligns with the universal approximation theory of \cite{chen1995universal}, which does not offer information on the necessary network complexity to approximate an operator with a given degree of accuracy (cf.\ Section \ref{s:SNO}). Developing a more sophisticated approximation theory to remedy this is beyond the scope of this paper.
\im[(c)]
The shallow sieve class $\CG_{n,u}$ was chosen to mitigate the theoretical challenges in establishing Theorem~\ref{t:rate}. The practical SNO architecture described in Sections \ref{s:implementation}-\ref{s:argo} can be viewed as an extension of this shallow class, in which the coefficient and basis components are represented by more flexible neural networks. 
Standard universal approximation results suggest that such richer architectures can approximate the shallow class well. As such, Theorem~\ref{t:rate} also provides theoretical support for the broader SNO methodology considered in this paper. However, a rigorous consistency analysis in that regard is left for future work.

\eiz
\end{remark}

\section{Implementation}\label{s:implementation}

In this section, we describe the practical implementation of the SNO for function-on-function regression with irregular input and output grids.

\subsection{Irregular observations and spline representation of inputs}

Assume that both the number of observations and the observation locations may vary across samples for the predictor function $X_i(\cdot)$ and the response function $Y_i(\cdot)$. 
Since $X_i(\cdot)$ is used as the input to the SNO, we refer to it as the input function in what follows.
This sampling scheme differs from the theoretical setting in Section~\ref{s:theory}.
Specifically, for each training sample $i=1,\dots,n$, the input function $X_i(\cdot)$ is observed on an irregular grid $\{s_{i,j}\}_{j=1}^{J_i}$, and the response function $Y_i(\cdot)$ is observed with measurement errors on another irregular grid $\{t_{i,\ell}\}_{\ell=1}^{L_i}$.
Thus, for sample $i$, the available data are given by $\{(s_{i,j}, X_i(s_{i,j}))\}_{j=1}^{J_i}$ and $\{(t_{i,\ell}, Y_{i,\ell})\}_{\ell=1}^{L_i}$, where $Y_{i,\ell} = Y_i(t_{i,\ell}) + e_{i,\ell}$, with $Y_i(t) = G_0(X_i)(t) + \varepsilon_i(t)$.

The SNO is trained using the training data observed as described above.
For the response observations, the SNO can be evaluated at their irregular observation locations, allowing the resulting predictions to be directly compared with the observations during training.
In contrast, it is not straightforward to handle the raw input observations in the coefficient mapping, since the mapping depends on the entire trajectory of the predictor function.
To that end, we project each input function onto a fixed cubic B-spline basis on $[0,1]$, which provides a common fixed-dimensional coefficient representation of the irregularly observed input functions across samples. 
Let $\{B_k\}_{k=1}^{n_B}$ denote the chosen B-spline basis. 
Specifically, we approximate
$X_i(s)\approx {\widetilde X}_i(s) = \sum_{k=1}^{n_B} c^{(x)}_{i,k}\,B_k(s)$,
where the spline coefficients are estimated by penalized least squares; see, e.g., \cite{ramsaysilverman}. 
The resulting coefficient vector
$x^{\text{feat}}_i = (c^{(x)}_{i,1},\dots,c^{(x)}_{i,n_B})$
is used as the feature representation of $X_i$ and serves as the input to the coefficient networks of the SNO. 
 
The resulting spline feature vectors may optionally be standardized coordinate-wise using the training-set mean and standard deviation, and the same transformation is then applied to the validation and test sets. 
This spline-based encoding yields a stable and compact representation of input curves while naturally accommodating subject-specific irregular grids.

\subsection{Output representation of the SNO}

Let $x^{\mathrm{feat}}$ denote the B-spline coefficient vector of the
input function $X(\cdot)$. For $t\in[0,1]$, the SNO represents the
predicted response as
$\widehat Y(t)=\sum_{j=1}^{p}
c_j(x^{\mathrm{feat}};\theta)\phi_j(t;\eta)$,
where $\{c_j(x^{\mathrm{feat}};\theta)\}_{j=1}^p$ and
$\{\phi_j(t;\eta)\}_{j=1}^p$ are the outputs of CoefficientNet and
BasisNet, respectively. The parameters $\theta$ and $\eta$ are trained
jointly. We describe the two network structures below.

\parhead{CoefficientNet}
This network maps the B-spline coefficient vector
$x^{\mathrm{feat}}\in\mathbb{R}^{n_B}$ to a $p$-dimensional coefficient vector:
\vspace{-0.75em}
\[
(c_1(x^{\mathrm{feat}};\theta),\ldots,c_p(x^{\mathrm{feat}};\theta))^\top
=
\mathrm{CoefNet}(x^{\mathrm{feat}};\theta)\in\mathbb{R}^p.
\]
In general, the CoefficientNet may be implemented as a fully connected neural network with nonlinear hidden layers and a linear output layer.
In particular, we use three ReLU hidden layers:
$h_\ell=\mathrm{ReLU}(W_\ell h_{\ell-1}+b_\ell)$,
$\ell=1,2,3$, with $h_0=x^{\mathrm{feat}}$, followed by
$\mathrm{CoefNet}(x^{\mathrm{feat}};\theta)=W_4h_3+b_4$,
where $\theta=\{(W_\ell,b_\ell)\}_{\ell=1}^4$.
The specific network configurations used in the simulation study and the Argo data analysis are given in Sections~\ref{ss:architecture} and~\ref{ss:argo_model_and_fitting}, respectively.

\parhead{BasisNet}
The BasisNet maps an evaluation point $t \in [0,1]$ to a $p$-dimensional vector:
\vspace{-0.75em}
\[
(\phi_1(t;\eta),\ldots,\phi_p(t;\eta))^\top
=
\mathrm{BasisNet}(t;\eta)
\in \mathbb{R}^p.
\]
In our implementation, Fourier features are used as an optional positional encoding for the evaluation point $t$. When this option is adopted, the evaluation point $t$ is first transformed into
\vspace{-0.75em}
\[
\gamma(t)
=
\left(
t,
\sin(2\pi t),\ldots,\sin(2\pi n_{\mathrm{freq}} t),
\cos(2\pi t),\ldots,\cos(2\pi n_{\mathrm{freq}} t)
\right)^\top,
\]
where $n_{\mathrm{freq}}$ is the number of frequencies. The map $\gamma(t)$ augments the scalar location $t$ with sine and cosine features at multiple frequencies, thereby providing the network with a richer representation of the position at which the output function is evaluated. This is  analogous in spirit to the positional encoding used in Transformer models, where location information is embedded into a higher-dimensional feature representation; see~\cite{vaswani2017attention}. Such Fourier-feature representations can help neural networks learn functions with high-frequency variation in low-dimensional inputs; see~\cite{tancik2020fourier}. When $n_{\mathrm{freq}}=0$, no Fourier features are used and the transformation reduces to the identity encoding $\gamma(t)=t$.

As with the CoefficientNet, the BasisNet may be implemented as a fully connected neural network with nonlinear hidden layers and a linear output layer.
In particular, we use two fully connected hidden layers with componentwise hyperbolic tangent activations:
$r_\ell=\tanh(V_\ell r_{\ell-1}+d_\ell)$, $\ell=1,2$, with
$r_0=\gamma(t)$, followed by
$(\phi_1(t;\eta),\ldots,\phi_p(t;\eta))^\top=V_3r_2+d_3$,
where $\eta=\{(V_\ell,d_\ell)\}_{\ell=1}^3$.
The specific network configurations used in the simulation study and the Argo data analysis are given in Sections~\ref{ss:architecture} and~\ref{ss:argo_model_and_fitting}, respectively.

\subsection{Training loss}

The SNO is trained by minimizing the empirical integrated squared loss described below. 
Since the SNO can be evaluated at arbitrary output locations, predictions can be computed directly at the observed output locations $\{t_{i,\ell}\}_{\ell=1}^{L_i}$.
For the $i$-th training function and an evaluation point $t_{i,\ell}$, the network output is
\[
\wh Y_{i,\ell} := G_{\theta,\eta}(X_i)(t_{i,\ell})
= \sum_{j=1}^{p} c_j(x^{\mathrm{feat}}_i;\theta)\,\phi_j(t_{i,\ell};\eta). 
\]
Thus, our training loss is chosen as
\begin{equation}\label{e:trainLoss}
\CL_n(\theta,\eta)
= \frac{1}{n} \sum_{i=1}^n \sum_{\ell=1}^{L_i} w_{i,\ell}\,
  \left( Y_{i,\ell} - \widehat Y_{i,\ell} \right)^2,
\end{equation}
where the quadrature weights $w_{i,\ell}$ are defined analogously to \eqref{e:w_1}, with $L$, $t_\ell$, and $w_\ell$ replaced by $L_i$, $t_{i,\ell}$, and $w_{i,\ell}$, respectively, for the $i$-th response grid. 
The parameters $(\theta,\eta)$ are estimated by minimizing $\CL_n(\theta,\eta)$ via stochastic gradient-based optimization.


\section{Simulation Studies} \label{s:simulation}

We conducted simulation studies to evaluate the prediction performance of the proposed SNO. Specifically, we considered eight function-on-function regression models and compared the SNO with the FFDNN benchmark of \cite{rao2023modern} in terms of response-function prediction accuracy.

\subsection{Data-generating models}

\parhead{Generation of predictor functions}
Each $X_i(\cdot)$ is sampled from a zero-mean Gaussian process with Mat\'ern covariance kernel
\[
C(s,s')
= \sigma^2 \, \frac{2^{1-\nu}}{\Gamma(\nu)}
\left(\frac{\sqrt{2\nu}\, \|s-s'\|}{\vartheta}\right)^{\nu}
K_{\nu}\!\left(\frac{\sqrt{2\nu}\, \|s-s'\|}{\vartheta}\right),
\]
where $\Gamma$ is the gamma function, $K_\nu$ is the modified Bessel function of the second kind, and $\vartheta$ and $\nu$ are positive covariance parameters. 
Here we set the smoothness parameter $\nu=5/2$, the range parameter $\vartheta=0.5$, and the variance $\sigma^2=1$. 

For each replication, the predictor functions $X_i(\cdot)$, $i=1,\ldots,n$, are partially observed on subject-specific grids. Specifically, for each $i$, the number of observation points $m_{x,i}$ is first drawn uniformly from $\{80,81,\ldots,120\}$. The observation locations are then sampled independently from $\mathrm{Unif}(0,1)$, yielding an irregular grid. This differs from \cite{rao2023modern}, where predictor functions are observed on an equally spaced grid. Thus, although the first six data-generating models are adapted from Rao and Reimherr, our simulation settings are not identical because we use subject-specific irregular grids. This design better reflects irregular sampling in practice and is naturally accommodated by the proposed SNO.

\vskip.2cm 
\parhead{Generation of response functions}
The response functions $Y_i(\cdot)$ are generated from the nonlinear function-on-function regression model
$Y_i(t) = G_0(X_i)(t)+\varepsilon_i(t)$ for a list of $G_0$ 
described below. For each subject $i$, the response function is observed on a subject-specific grid
$\{t_{i,\ell}\}_{\ell=1}^{m_{y,i}}\subset[0,1]$, where the number of observation points $m_{y,i}$ is drawn uniformly from $\{60,61,\ldots,90\}$. The observation locations are sampled independently from $\mathrm{Unif}(0,1)$, yielding an irregular response grid.
At each observed location $t_{i,\ell}$, we add independent Gaussian measurement error,
\begin{equation}\label{e:measurement_error}
Y_{i,\ell} = Y_i(t_{i,\ell}) +e_{i,\ell},
\qquad e_{i,\ell}\overset{\mathrm{iid}}{\sim}N(0,1).
\end{equation}
Following \cite{rao2023modern}, we set $\varepsilon_i(t)\equiv 0$ in the reported simulations, so that the response noise arises only from the measurement errors in~\eqref{e:measurement_error}.
Additional experiments with a nonzero error process of moderate magnitude led to qualitatively similar conclusions.


\vskip.2cm
\parhead{Signal-to-noise calibration}
For an unscaled operator $\widetilde G_0$, define the integrated signal variance
\[
\mathcal S(\widetilde G_0)
=
\E\!\left[
\int_0^1
\bigl\{\widetilde G_0(X)(t)
-\E[\widetilde G_0(X)(t)]\bigr\}^2\,dt
\right].
\]
Since the measurement errors are independent $N(0,1)$ variables, we set
$G_0=a\widetilde G_0$, with
$a=\{2/\mathcal S(\widetilde G_0)\}^{1/2}$,
so that the resulting SNR, defined as the integrated signal variance divided by the measurement-error variance, is approximately $2$. The resulting values of $a$ are
reported with the model definitions below.

We consider eight function-on-function regression settings, where 
the first six are adapted from \cite{rao2023modern}. 

\begin{enumerate}
\item {Linear model:}
$\widetilde G_0(X_i)(t)
=
\left\{\int_0^1 5\sin(2\pi s)X_i(s)\,ds\right\}
3\sin(3\pi t)$, with $a=0.47$.

\item {Convolution additive model (CAM):}
$\widetilde G_0(X_i)(t)
=
t\int_0^1 X_i(s)^2s\,ds$, with $a=4.26$.

\item {Single-index model:}
$\widetilde G_0(X_i)(t)
=
\left\{
3\sin(3\pi t)
\int_0^1 5\sin(2\pi s)X_i(s)\,ds
\right\}^{2}$, with $a=0.090$.

\item {Multiple-index model:}
Let $\widetilde G_0(X_i)(t)=I_1(X_i,t)^2I_2(X_i,t)^2$, where $I_1(X_i,t)=3\sin(3\pi t)\int_0^1 5\sin(2\pi s)X_i(s)\,ds$
and $I_2(X_i,t)=2\sin(3\pi t)\int_0^1 4\sin(5\pi s)X_i(s)\,ds$, with $a=0.083$.

\item {Quadratic model:} With $\beta_a(q)=5\sin(3\pi q)$,
$\beta_b(s)=5\sin(\pi s)$, and
$\beta_c(t)=5\sin(\pi t)$, define
\[
\begin{aligned}
\widetilde G_0(X_i)(t)
={}&
15\sin(3\pi t)\int_0^1\sin(2\pi s)X_i(s)\,ds\\
&+
\beta_c(t)\int_0^1\!\!\int_0^1
\beta_a(q)\beta_b(s)X_i(q)X_i(s)\,dq\,ds,
\end{aligned}
\]
and set $G_0=a\widetilde G_0$ with $a=0.122$.

\item {Complex quadratic model:}
Using the functions $\beta_a,\beta_b,\beta_c$ defined in the quadratic model, let
\[
\widetilde G_0(X_i)(t)
=
t\int_0^1 X_i(s)^2s\,ds
+
\left\{
\beta_c(t)\int_0^1\!\!\int_0^1
\beta_a(q)\beta_b(s)X_i(q)X_i(s)\,dq\,ds
\right\}^{2},
\]
and set $G_0=a\widetilde G_0$ with $a=0.00212$.

\item {Nonlinear dynamical system with analytic solution:}
For $\lambda=0.1$, let
\[
\widetilde G_0(x)(t)
=
\tanh\!\left\{
\int_0^t e^{-\lambda(t-s)}x(s)\,ds
\right\},
\qquad t\in[0,1].
\]
Equivalently, $G_0(x)=a\widetilde G_0(x)$ satisfies
\[
\frac{d}{dt}G_0(x)(t)
=
\left[
1-\left\{\frac{G_0(x)(t)}{a}\right\}^{2}
\right]
\left[
-\lambda a\,
\operatorname{arctanh}\!\left\{\frac{G_0(x)(t)}{a}\right\}
+a\,x(t)
\right],
\]
where $G_0(x)(0)=0$ and $a=3.63$.

\item {Nonlinear dynamical system without analytic solution:}
Let $\widetilde G_0(x)(t)=\widetilde y_0(t)$, where
$\widetilde y_0'(t)
=
-\widetilde y_0(t)
-\frac12\widetilde y_0(t)^3
+x(t)+0.3\sin(2\pi t)$ and $\widetilde y_0(0)=0$.
The trajectory $\widetilde y_0$ is generated numerically using an ODE solver, and we set $G_0=a\widetilde G_0$ with $a=4.00$.
\end{enumerate}

\vskip.2cm
\parhead{Sample sizes and replications}
For each of the models (i)-(viii), we generate $50$ independent replications using different random seeds. 
In each replication, $500$ training samples, $200$ validation samples, and $500$ test samples are generated.
The validation set is used to select the training epoch. 

\subsection{Model architectures}\label{ss:architecture}

We consider SNO as the proposed method and FFDNN of~\cite{rao2023modern} as the main benchmark method.
Below, we describe the specific architectures of these two models used in the simulation studies. For an additional comparison, we also consider FFBNN of~\cite{rao2023modern}; its architecture and simulation results are reported in the Supplement.

\parhead{SNO architecture}
The input function $X_i(\cdot)$ is represented by $n_B=30$ B-spline coefficients.
The CoefficientNet has three hidden layers with widths $128$, $64$, and $32$, respectively, followed by a linear output layer with output dimension $p=32$.
For the BasisNet, we use Fourier features with $n_{\mathrm{freq}}=1$, i.e., $\gamma(t) = (t,\sin(2\pi t),\cos(2\pi t))^\top$.
The BasisNet has two hidden layers with widths $64$ and $32$, respectively, followed by a linear output layer with output dimension $p=32$.

\parhead{FFDNN architecture}
The FFDNN benchmark maps a regular-grid representation of the predictor
curve to a regular-grid representation of the response curve. Because
the simulated data are observed on irregular grids, both curves are first
linearly interpolated onto regular grids, yielding
$x\in\mathbb R^{m_x}$ and $y\in\mathbb R^{m_y}$ 
with
$m_x=100$ and $m_y=75$.
The network has two hidden layers, each consisting of four parallel
channels of width $S=30$. For $k=1,\ldots,4$,
$h_1^{(k)}=\mathrm{ReLU}\!\left(m_x^{-1}W_1^{(k)}x+b_1^{(k)}\right)$,
and
$h_2^{(k)} = \mathrm{ReLU}\!\left(
S^{-1}\sum_{j=1}^4 W_2^{(j,k)}h_1^{(j)}+b_2^{(k)}
\right)$.
The output is $\widehat y
=b_3+S^{-1}\sum_{k=1}^4W_3^{(k)}h_2^{(k)}$.
Here
$W_1^{(k)}\in\mathbb R^{S\times m_x}$,
$W_2^{(j,k)}\in\mathbb R^{S\times S}$,
$W_3^{(k)}\in\mathbb R^{m_y\times S}$,
$b_1^{(k)},b_2^{(k)}\in\mathbb R^S$, and
$b_3\in\mathbb R^{m_y}$.
The model is trained by minimizing the mean squared error between
$\widehat y$ and the interpolated response vector $y$.

\parhead{Training settings}
To save space, detailed training configurations, including the maximum number of epochs and learning rate, are reported in Table~\ref{tab:simulation_training_settings} in the Supplement.

\subsection{Evaluation metric}


The training loss 
is used for model fitting, and the validation loss is used for selecting the training epoch.
For test evaluation, however, we evaluate prediction accuracy against the noiseless true response function $G_0(X_i)(\cdot)$ rather than the noisy observed response $Y_i(\cdot)$.
Specifically, for each simulation replicate, we use the fitted $\wh G$ and the test sample, $X_{\mathrm{test},i}, i=1,\ldots,n_{\rm test}=500$, to compute
\[
\left\{
\frac{1}{n_{\rm test}}
\sum_{i=1}^{n_{\rm test}}
\int_0^1 \bigl(\widehat{G}(X_{\mathrm{test},i})(t)-G_0(X_{\mathrm{test},i})(t)\bigr)^2\,dt
\right\}^{1/2},
\]
where the integral over $t$ is computed numerically.
This quantity is an estimate of the root mean integrated squared error:
\[
\mathrm{iRMSE} =
\left\{
\E_X
\int_0^1 \bigl(\widehat{G}(X)(t)-G_0(X)(t)\bigr)^2\,dt
\right\}^{1/2},
\]
where the expectation is taken with respect to an independent copy of the predictor $X$,  conditional on the fitted operator $\wh G$. 

\subsection{Results}

Table~\ref{tab:sim_irmse_comparison} compares the iRMSE of FFDNN and SNO across the eight data-generating models.
The table reports the mean and standard deviation of the iRMSE values over the 50 simulation runs.
Figures~\ref{fig:simulation_histogram_vs_ffdnn} and \ref{fig:simulation_boxplot_vs_ffdnn} display, respectively, the histograms and boxplots of the run-wise iRMSE values for the eight data-generating models.
As can be seen from these figures as well as Table~\ref{tab:sim_irmse_comparison}, except for the linear model, for which FFDNN is particularly well suited, SNO consistently yields smaller and less variable iRMSE values than FFDNN.
This suggests that SNO provides more accurate predictions than the FFDNN baseline, particularly in nonlinear settings.

Table~\ref{tab:sim_computational_cost} reports the computational costs of FFDNN and SNO for each data-generating model in our computing environment, with all experiments run using a GPU.
The reported values are averages over the 50 simulation runs.
Overall, FFDNN tends to require more training epochs than SNO and, in several settings, approaches the maximum of 500 epochs without triggering early stopping.
Consequently, FFDNN generally requires a longer total training time.
In contrast, early stopping is triggered after substantially fewer epochs for SNO in most settings, and SNO also requires less time per epoch than FFDNN in our experiments.
The computation times should be interpreted as approximate; see the caption of Table~\ref{tab:sim_computational_cost} for details.


\begin{table}[h!]
\centering
\caption{Comparison of prediction performance for FFDNN and SNO across eight data-generating models. Mean and Std denote the mean and standard deviation of the iRMSE over 50 simulation runs, respectively.}
\label{tab:sim_irmse_comparison}

{\footnotesize
\begin{tabular}{lcccc}
\toprule
\multirow{2}{*}{Data-generating model}
& \multicolumn{2}{c}{FFDNN}
& \multicolumn{2}{c}{SNO} \\
\cmidrule(lr){2-3} \cmidrule(lr){4-5}
& Mean & Std & Mean & Std \\
\midrule
Linear            & 0.0763 & 0.0123 & 0.0899 & 0.0090 \\
CAM               & 0.9612 & 0.3914 & 0.1393 & 0.0281 \\
Single-index      & 1.4218 & 0.1288 & 0.1574 & 0.0351 \\
Multiple-index    & 1.3694 & 0.3611 & 0.6164 & 0.2675 \\
Quadratic         & 1.0409 & 0.3429 & 0.1769 & 0.0380 \\
Complex quadratic & 1.1499 & 0.3652 & 0.4782 & 0.2857 \\
Dynamical 1       & 0.3115 & 0.0091 & 0.1156 & 0.0074 \\
Dynamical 2       & 0.3610 & 0.0140 & 0.0984 & 0.0052 \\
\bottomrule
\end{tabular}
}
\end{table}

\begin{table}[h!]
\centering
\caption{Comparison of computational costs for FFDNN and SNO across eight data-generating models.
The reported values are averages over 50 simulation runs.
Computation times are approximate and may vary with the computational environment and system load.
For example, although the architecture and computational procedure of FFDNN are unchanged across the data-generating models, its average time per epoch varies across models.
The timing results are therefore intended primarily as a rough comparison under the present experimental setting.}
\label{tab:sim_computational_cost}
{\footnotesize
\begin{tabular}{lcccccc}
\toprule
\multirow{2}{*}{Data-generating model}
& \multicolumn{2}{c}{Epochs}
& \multicolumn{2}{c}{Total time}
& \multicolumn{2}{c}{Time/epoch} \\
\cmidrule(lr){2-3} \cmidrule(lr){4-5} \cmidrule(lr){6-7}
& FFDNN & SNO & FFDNN & SNO & FFDNN & SNO \\
\midrule
Linear            & 495.0 & 110.3 & 60.9 & 10.3 & 0.123 & 0.094 \\
CAM               & 417.5 & 128.0 & 51.7 & 11.9 & 0.124 & 0.093 \\
Single-index      & 152.4 & 150.2 & 19.1 & 13.8 & 0.126 & 0.092 \\
Multiple-index    & 233.1 & 188.1 & 30.8 & 17.2 & 0.136 & 0.092 \\
Quadratic         & 400.9 & 170.1 & 67.9 & 15.7 & 0.170 & 0.092 \\
Complex quadratic & 350.1 & 140.5 & 59.6 & 13.0 & 0.171 & 0.093 \\
Dynamical 1       & 441.1 & 112.4 & 61.3 & 10.4 & 0.139 & 0.093 \\
Dynamical 2       & 497.0 & 97.8  & 61.2 & 9.2  & 0.123 & 0.094 \\
\bottomrule
\end{tabular}
}
\end{table}

\begin{figure}[!t]
\centering
\includegraphics[width=0.95\textwidth]{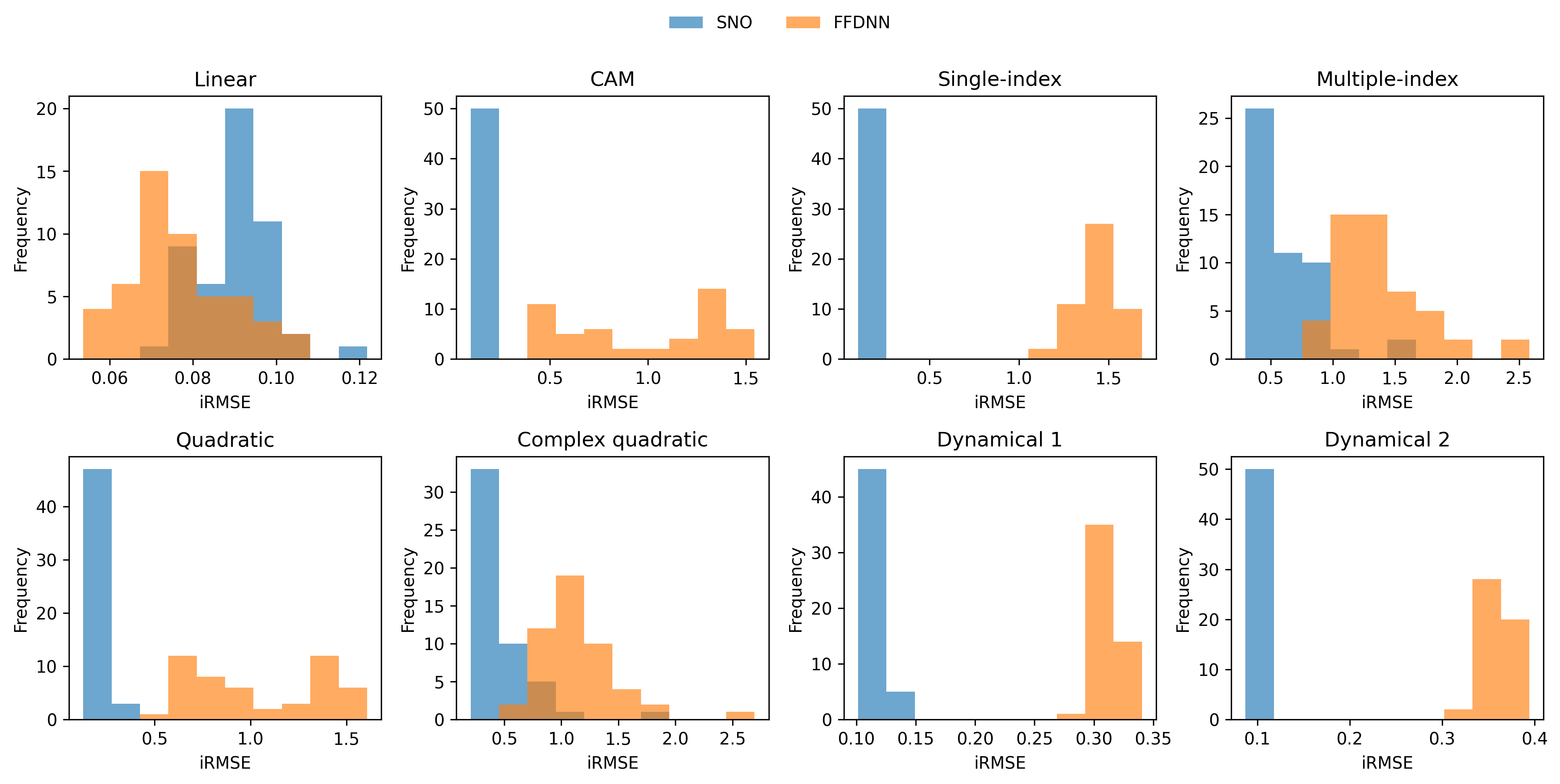}
\caption{
Histograms comparing the run-wise iRMSE values of SNO and FFDNN over the 50 replicate runs.
}
\label{fig:simulation_histogram_vs_ffdnn}
\end{figure}

\begin{figure}[!t]
\centering
\includegraphics[width=0.95\textwidth]{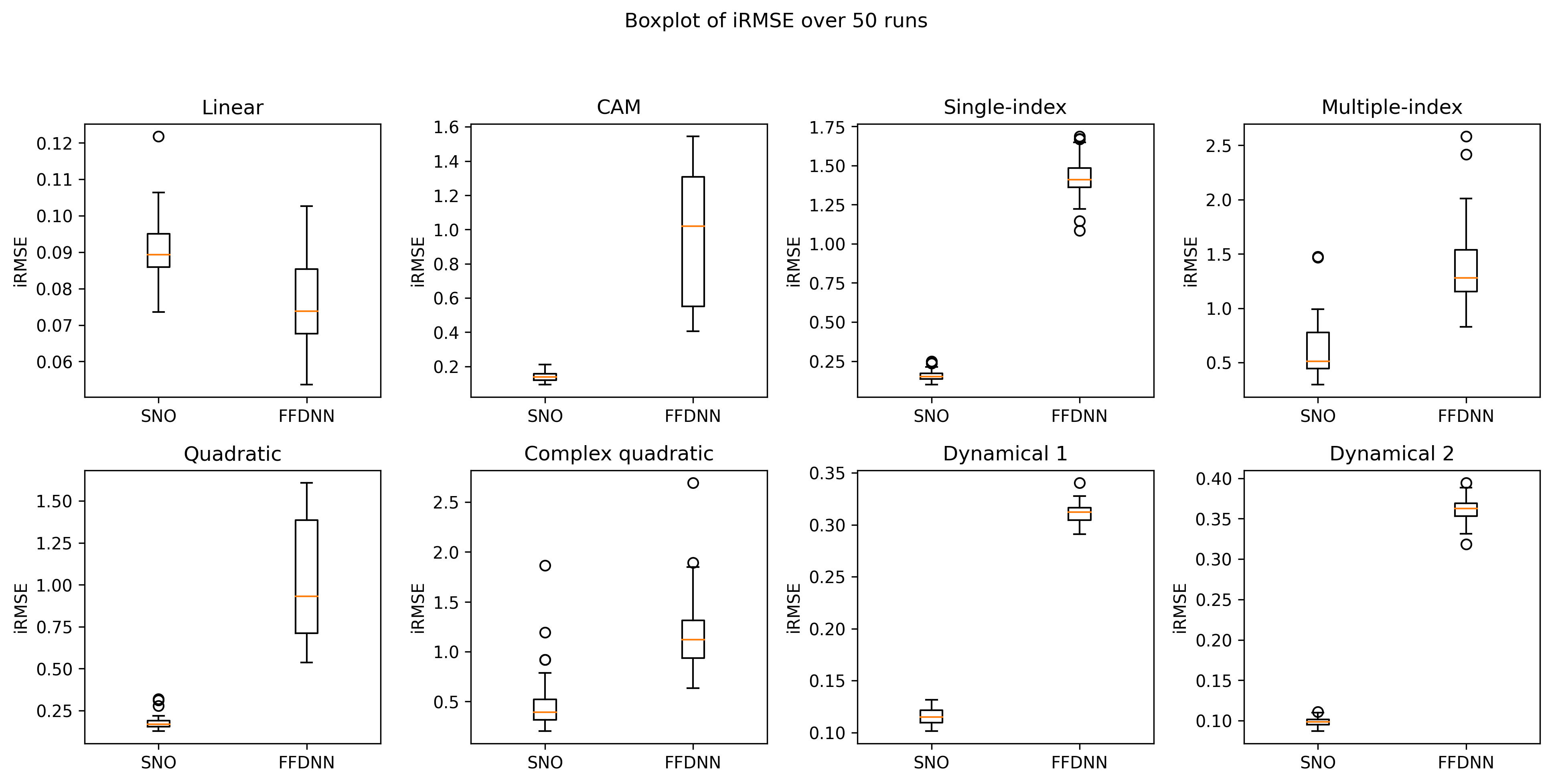}
\caption{
Boxplots comparing the run-wise iRMSE values of SNO and FFDNN over the 50 replicate runs.
}
\label{fig:simulation_boxplot_vs_ffdnn}
\end{figure}


\section{Analysis of the Argo data} \label{s:argo}



In this section, we present a real-data application of SNO using BGC-Argo float data. 
Argo is an international program \citep[cf.][]{argo2020} of autonomous robotic floats that drift at depths of the ocean to monitor ocean conditions. 
Each float periodically ascends from 2000 meters, recording measurements as a function of depth or pressure, producing what is called a \emph{profile}. 
Core Argo floats collect temperature and salinity profiles, whereas the more advanced BioGeoChemical Argo (BGC-Argo) floats additionally measure biogeochemical variables, including dissolved oxygen, pH, and nitrate concentration levels. 
Since BGC-Argo floats are considerably more expensive, they remain substantially outnumbered by Core floats.
This disparity motivates our application.
We use the SNO regression framework to learn the relationship between the commonly available variables, temperature and salinity, and the BGC-specific variable dissolved oxygen. Such a model provides a framework for potential imputation of dissolved oxygen profiles for the much larger Core Argo fleet. Specifically, once trained on BGC-Argo data, the model could be used to predict a dissolved oxygen profile from the temperature and salinity profiles of a Core float together with auxiliary spatio-temporal information (location, time). 
In the present analysis, however, both model fitting and out-of-sample evaluation are conducted using BGC-Argo profiles.
We refer the reader to \cite{giglio2018estimating} and \cite{korte2025functional} and the references therein for further background on imputation of BGC variables for Core Argo profiles.

\subsection{Data description and preprocessing}

We analyze BGC-Argo profiles observed during 2016--2024 from the SEANOE \texttt{BGC Sprof} NetCDF database using the snapshot dated April 8, 2026.
Each profile contains temperature, salinity, and dissolved oxygen curves observed at irregular pressure levels. Within a profile, the three variables share the same pressure grid, whereas the grids vary across profiles. 
Each profile is also accompanied by metadata, including observation date and sampling location.

\parhead{Preprocessing and data filtering}

We focus on observations in the pressure range
$[100,300]$ dbar and remove any pressure level at which temperature,
salinity, or dissolved oxygen is non-finite. Because the three variables
share the same within-profile pressure grid, observations at a removed
pressure level are deleted from all three curves, while the remaining
profile is retained.
We further exclude profiles that fail any of the following criteria:
(i) at least one valid observation below $100$ dbar and one above $300$ dbar, so that boundary information is retained when the profiles are restricted to $[100,300]$ dbar;
(ii) all dissolved oxygen values over the extended interval, from the nearest valid observation below $100$ dbar to the nearest valid observation above $300$ dbar, lie within $[0,600]$;
(iii) the difference between the maximum and minimum dissolved oxygen values over the extended interval lies within $[5,300]$; 
and (iv) at least $30$ valid observations are in the range
$[100,300]$ dbar. These criteria ensure adequate boundary coverage,
remove physically implausible or nearly flat profiles, and provide
sufficient within-profile resolution for model fitting.
Because date and location are used as covariates, profiles with missing
or invalid metadata are also removed. The Argo Julian day, latitude,
and longitude are used to construct year, month, day-of-year, and
geographical covariates.

\parhead{Training and test periods}

For each prediction target year from 2021 to 2024, profiles from the preceding five years are randomly divided into training and validation sets, with the latter used for early stopping, while profiles from the target year are used only for out-of-sample prediction and evaluation. 
For example, the predictions for 2021 use data from 2016--2020 for training and validation.

\subsection{Prediction model and fitting}\label{ss:argo_model_and_fitting}

Recall that our goal is to predict the dissolved oxygen curve $X^{\rm doxy}_i(s)$ from the temperature and salinity curves $X^{\rm temp}_i(s)$ and $X^{\rm psal}_i(s)$, together with auxiliary metadata, namely the geographical sampling location and observation date.
Thus, $X^{\rm temp}_i$ and $X^{\rm psal}_i$ are used as functional predictors, whereas $X^{\rm doxy}_i$ is used as the functional response.
In this subsection, we describe how these variables are encoded and combined to form the input to the SNO model, following the implementation details described in Section~\ref{s:implementation}.

\parhead{Encoding temperature and salinity curves by B-spline coefficients}
First, the functional predictors are represented by B-spline coefficient vectors.
The pressure values in $[100,300]$ dbar are first rescaled to $[0,1]$.
For each profile, we then separately fit B-spline expansions to the irregularly observed temperature and salinity curves on the rescaled pressure domain.
The resulting B-spline coefficient vectors for $X^{\rm temp}_i(\cdot)$ and $X^{\rm psal}_i(\cdot)$ are denoted by
$c^{\rm temp}_i \in \mathbb{R}^{K}$ and
$c^{\rm psal}_i \in \mathbb{R}^{K}$, respectively.
In our implementation, we use $K=32$ basis functions for each variable.
The B-spline coefficients are standardized using the mean and standard deviation computed from the training set.

\parhead{Encoding spatial and seasonal information}
In addition to the two B-spline coefficient vectors, we incorporate the observation date and geographical sampling location as auxiliary predictor covariates.
The sampling location is represented by latitude and longitude.
To avoid the discontinuity in longitude, we map latitude and longitude to three-dimensional Cartesian coordinates on the unit sphere as
$z_{i,1}=\cos(\mathrm{lat}_i)\cos(\mathrm{lon}_i)$,
$z_{i,2}=\cos(\mathrm{lat}_i)\sin(\mathrm{lon}_i)$, and
$z_{i,3}=\sin(\mathrm{lat}_i)$,
where latitude and longitude are measured in radians.
The observation date is encoded by the day of year $d_i$ as
$z_{i,4}=\sin(2\pi d_i/365.25)$ and $z_{i,5}=\cos(2\pi d_i/365.25)$.
Thus, the metadata vector is defined as
$z_i=(z_{i,1},z_{i,2},z_{i,3},z_{i,4},z_{i,5})^\top\in\mathbb{R}^{5}$.
These auxiliary covariates are bounded and are therefore used without standardization.

\parhead{Prediction model}
Combining the two B-spline coefficient vectors with the auxiliary metadata vector, we define $x_i=\bigl((c^{\rm temp}_i)^\top,(c^{\rm psal}_i)^\top,z_i^\top\bigr)^\top\in\mathbb{R}^{2K+5}$ with $K=32$.
The CoefficientNet takes $x_i$ as input and has three hidden layers with widths $256$, $128$, and $64$, respectively, followed by a linear output layer with output dimension $p=32$.
For the BasisNet, we use Fourier features with $n_{\rm freq}=1$, i.e.,
$\gamma(s)=(s,\sin(2\pi s),\cos(2\pi s))^\top$, where $s\in[0,1]$ denotes the rescaled pressure.
The BasisNet has two hidden layers with widths $64$ and $32$, respectively, followed by a linear output layer with output dimension $p=32$. 

\parhead{Model fitting}
The model is trained by minimizing the weighted integrated mean squared error over the observed response locations, as described in Section~\ref{s:implementation}.
For early stopping, $20\%$ of the profiles in each training period are randomly held out as a validation set, and training is terminated if the validation loss does not improve for $30$ consecutive epochs.
Further details on the training settings are provided in Table~\ref{tab:argo_training_settings} in the Supplement.

\subsection{Results}

For each observed dissolved oxygen profile in 2021-2024, we produce a predicted profile based on the data in the preceding five years. We then compare the observed and predicted profiles in two ways.

First, for each prediction year and hemisphere--season combination among the four summer/winter and north/south combinations, we compute the pointwise means of the observed and predicted dissolved oxygen profiles on a common pressure grid and compare the resulting mean curves. Thus, the spatial information other than hemisphere is not taken into account.
Figure~\ref{fig:argo_mean_curves_hemi_season_2021_2024} presents the observed and predicted mean curves for the four hemisphere--season groups in each year from 2021 to 2024.
Overall, the predicted mean curves and the interquartile ranges follow closely those of the observed ones.
The noticeable discrepancies occur in 2021, particularly during June--August, corresponding to Northern Hemisphere (NH) summer and Southern Hemisphere (SH) winter, when the predicted mean dissolved oxygen levels are slightly higher than the observed means in both hemispheres.
This may be due to the fact that 2021–2022 were truly anomalous years in the southern hemisphere, driven by an interacting group of factors (triple-dip La Ni\~{n}a, record Southern Ocean heat content, unprecedented sea ice loss, extreme atmospheric events). Any of these, or their combination, could plausibly shift the temperature–doxy relationship from what we expect based on previous years. For the remaining groups and prediction years, the discrepancies are generally small, indicating that SNO captures the main seasonal and hemispheric patterns
in the oxygen profiles.

\begin{figure}[htbp]
\centering
\begin{minipage}[t]{0.48\textwidth}
\centering
\includegraphics[width=\textwidth]{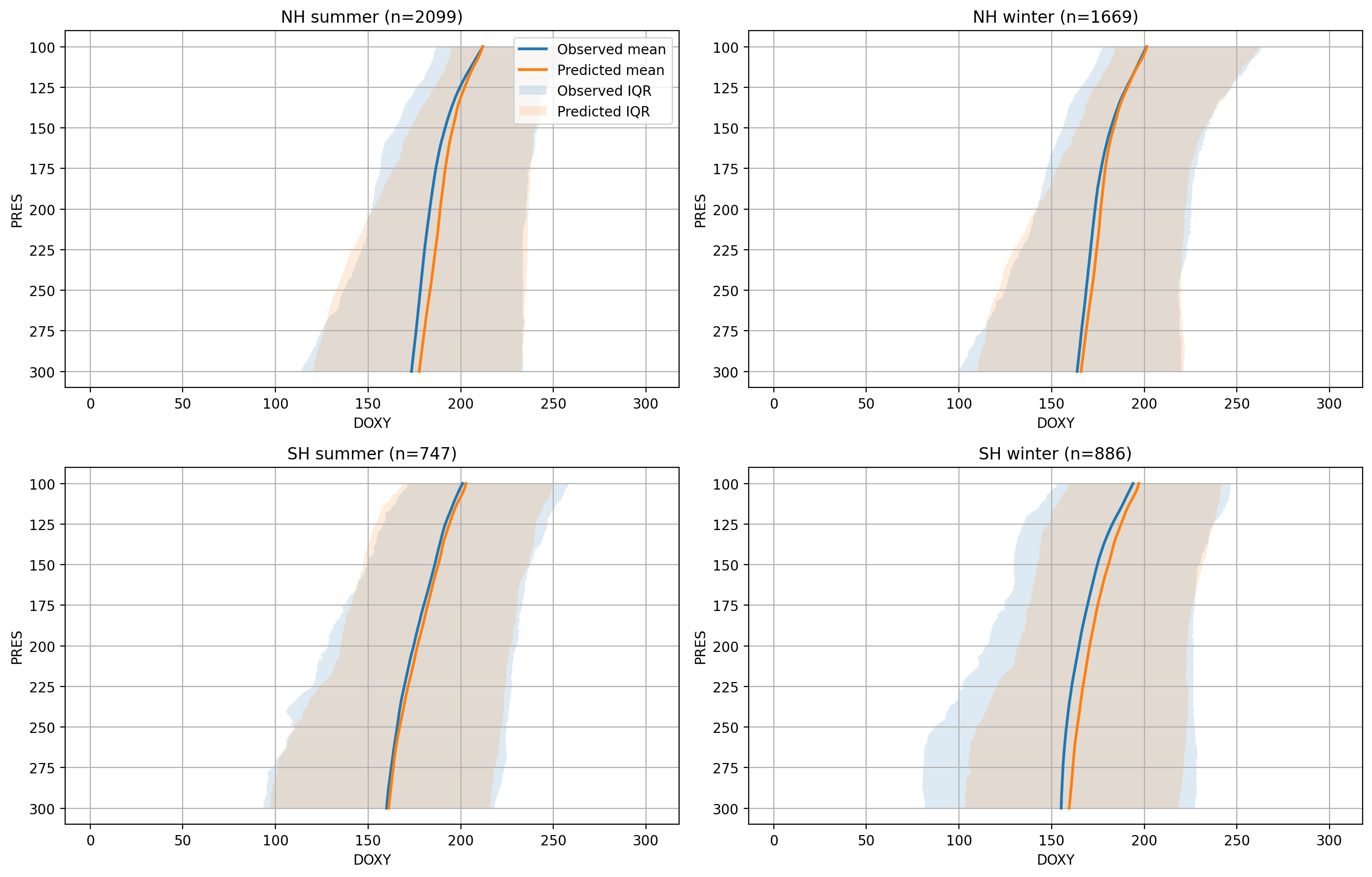}
\footnotesize Observed and predicted mean profiles for 2021
\end{minipage}
\hfill
\begin{minipage}[t]{0.48\textwidth}
\centering
\includegraphics[width=\textwidth]{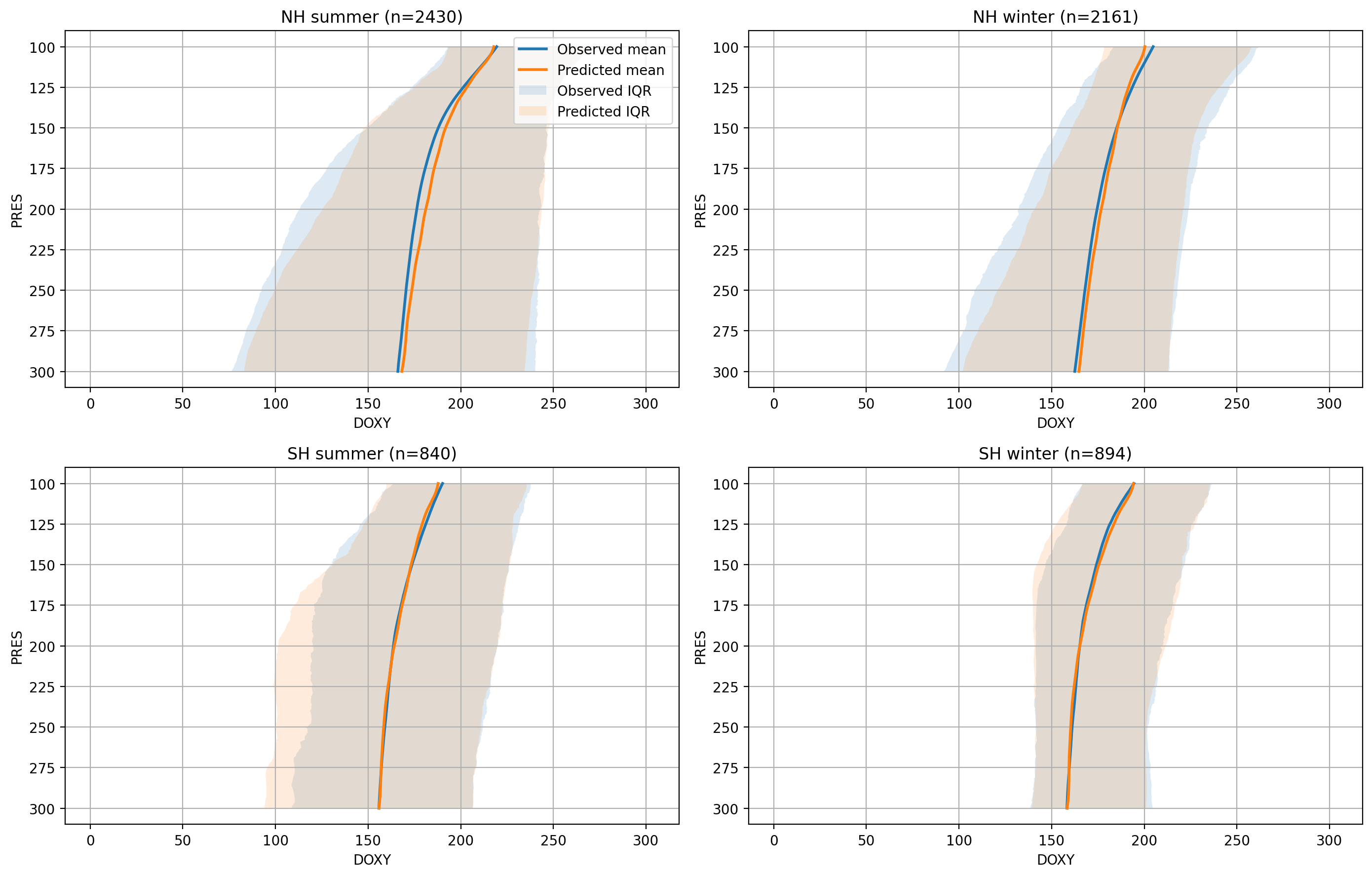}
\footnotesize Observed and predicted mean profiles for 2022
\end{minipage}

\vspace{0.5em}

\begin{minipage}[t]{0.48\textwidth}
\centering
\includegraphics[width=\textwidth]{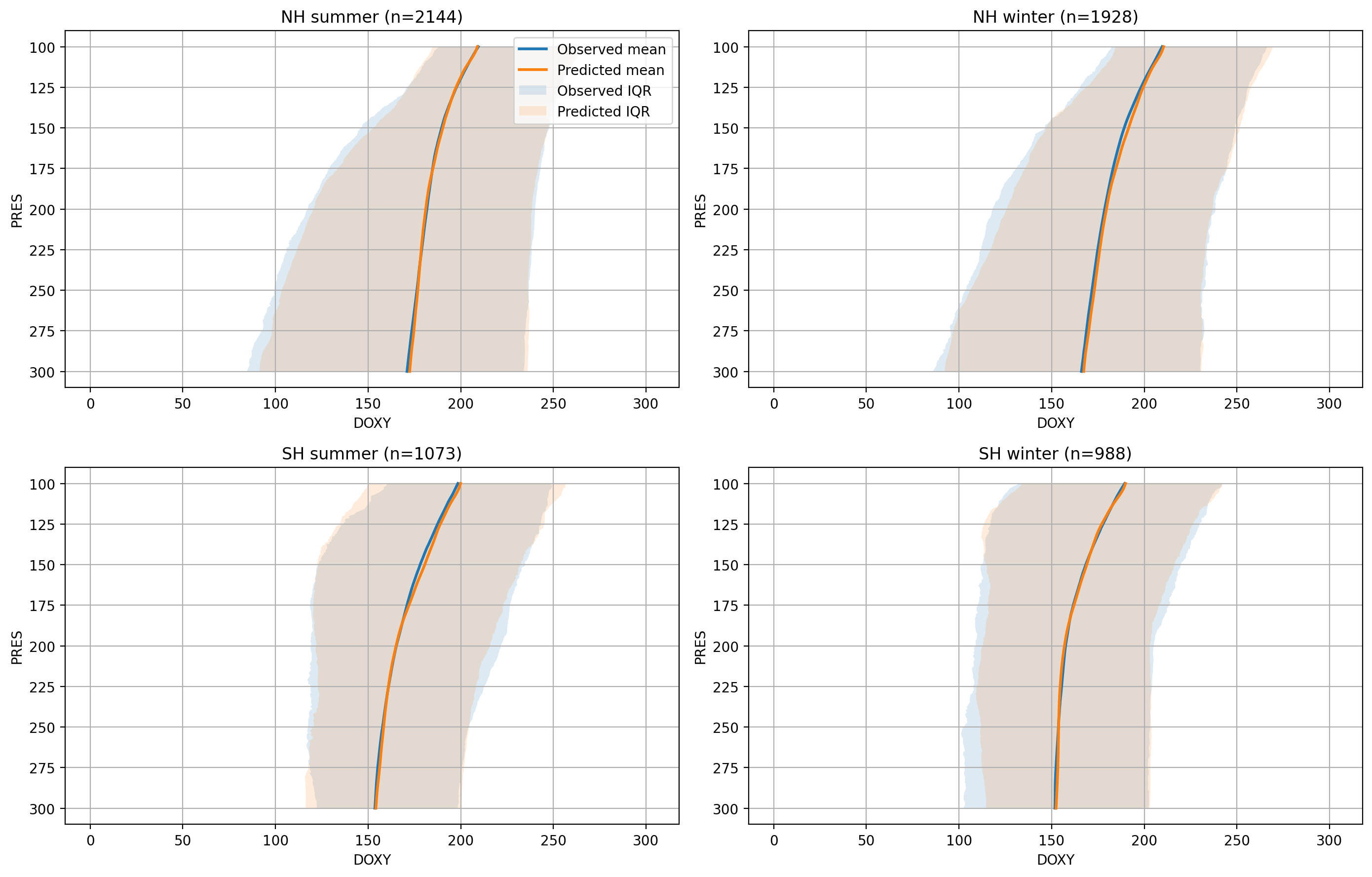}
\footnotesize Observed and predicted mean profiles for 2023
\end{minipage}
\hfill
\begin{minipage}[t]{0.48\textwidth}
\centering
\includegraphics[width=\textwidth]{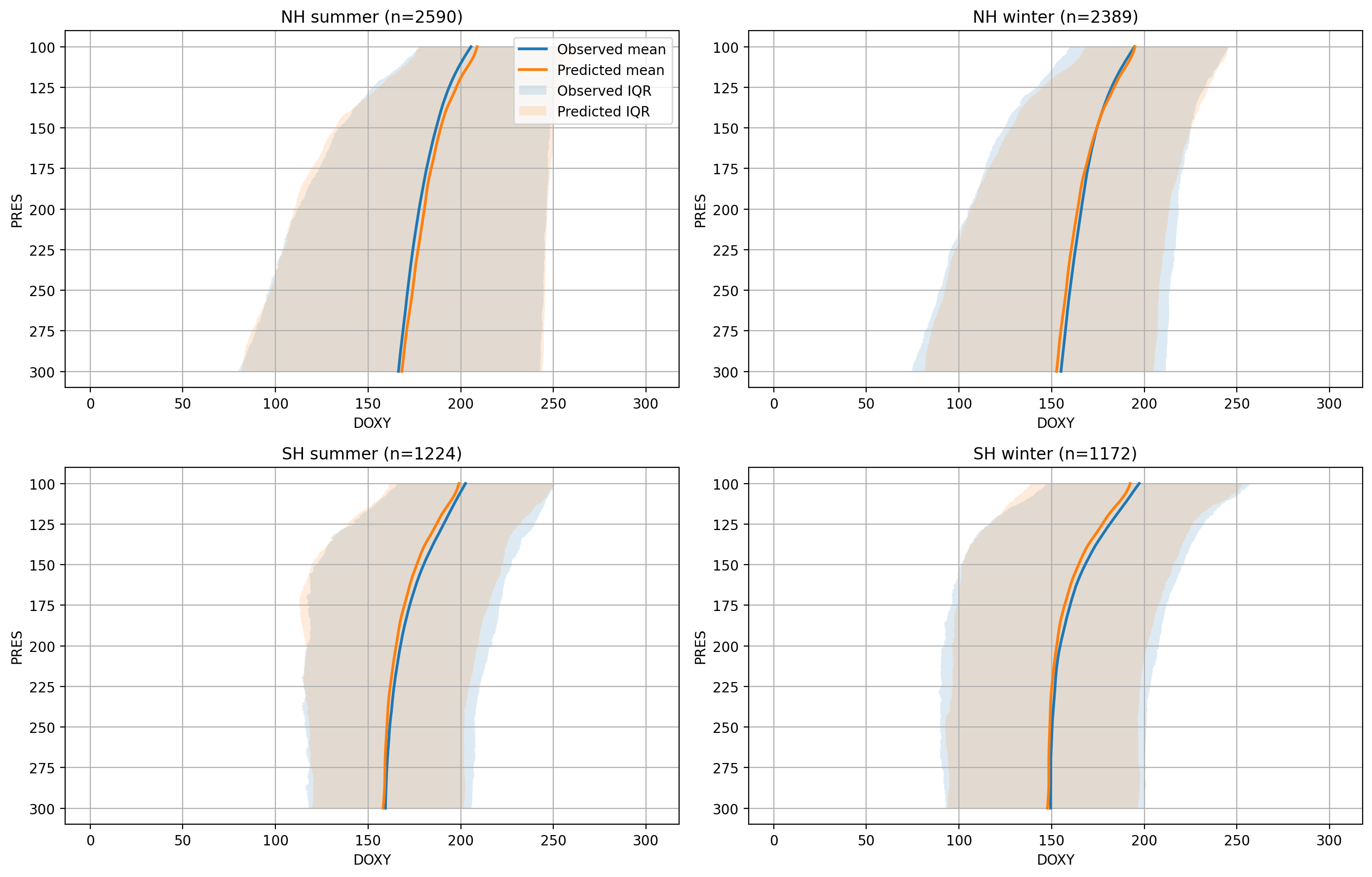}
\footnotesize Observed and predicted mean profiles for 2024 
\end{minipage}
\vskip.3cm
\caption{Observed and predicted mean dissolved oxygen profiles in 2021-2024 by hemisphere and season for the one-year-ahead predictions.
The horizontal and vertical axes represent pressure (dbar) and dissolved oxygen concentration ($\mu$mol/kg), respectively.
The solid lines represent the observed and predicted means, and the shaded regions represent the corresponding interquartile ranges.
Summer and winter correspond to June--August and December--February, respectively, in the Northern Hemisphere, with the seasonal labels reversed in the Southern Hemisphere.
The observed dissolved oxygen profiles are obtained by linear interpolation onto the common pressure grid.}
\label{fig:argo_mean_curves_hemi_season_2021_2024}
\end{figure}

In addition to the pressure-wise comparison based on the mean curves, we compare the observed and predicted spatial distributions of dissolved oxygen level at $200$ dbar, chosen as a representative pressure level in the range $[100,300]$ dbar.
Figure~\ref{fig:argo_spatial_comparison_200dbar}   displays the observed and predicted dissolved oxygen concentrations at $200$ dbar for the four hemisphere--season groups, with color representing the oxygen concentration; see the caption for computational details.
The SNO predictions broadly capture the observed large-scale spatial patterns of high- and low-oxygen regions in both hemispheres and in both seasons.
Although some local discrepancies remain, the overall spatial structures of the observed and predicted distributions are similar.

\begin{figure}[t]
\centering
\begin{minipage}[t]{0.49\textwidth}
\centering
\includegraphics[width=\textwidth]{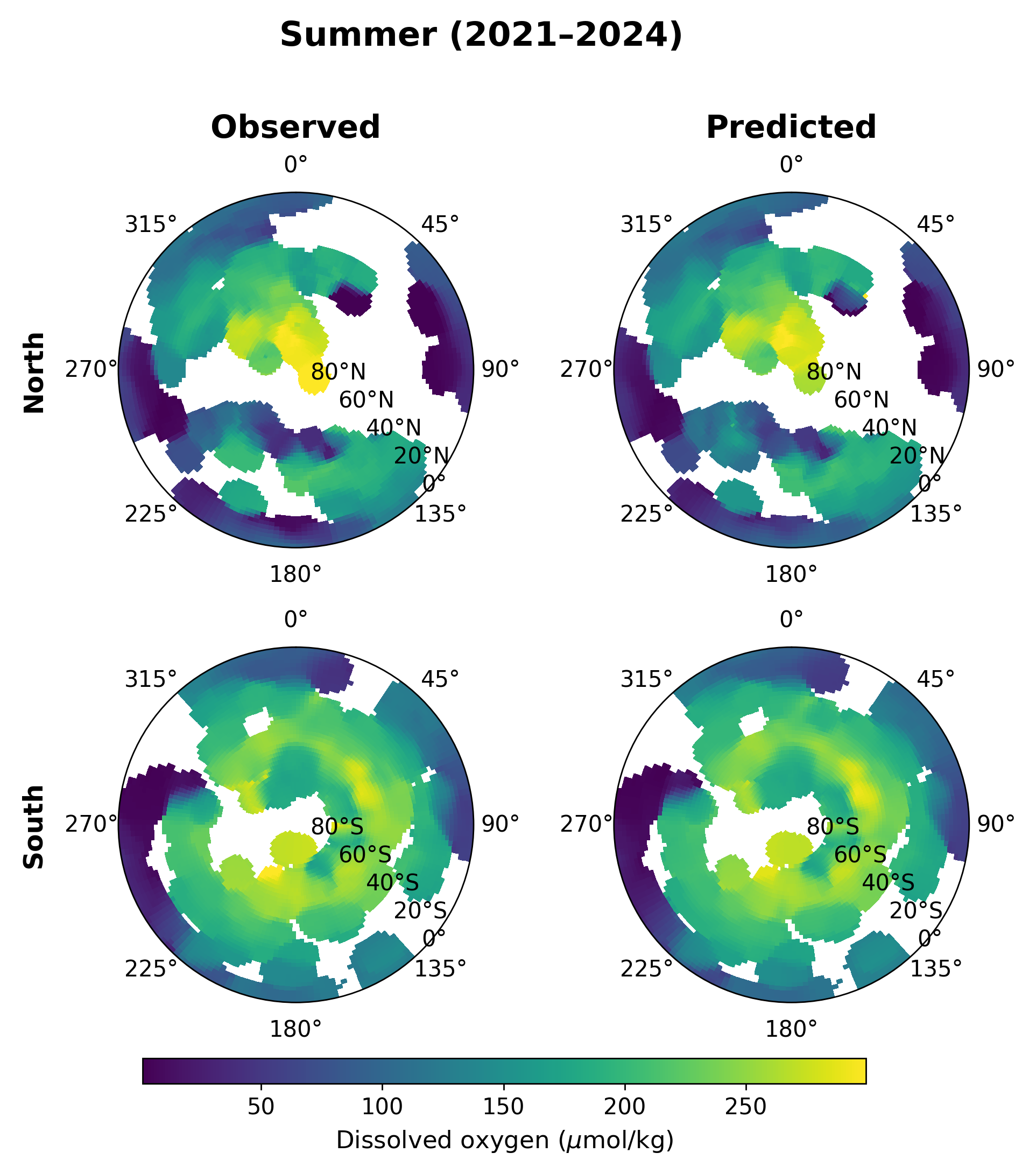}
\end{minipage}
\hfill
\begin{minipage}[t]{0.49\textwidth}
\centering
\includegraphics[width=\textwidth]{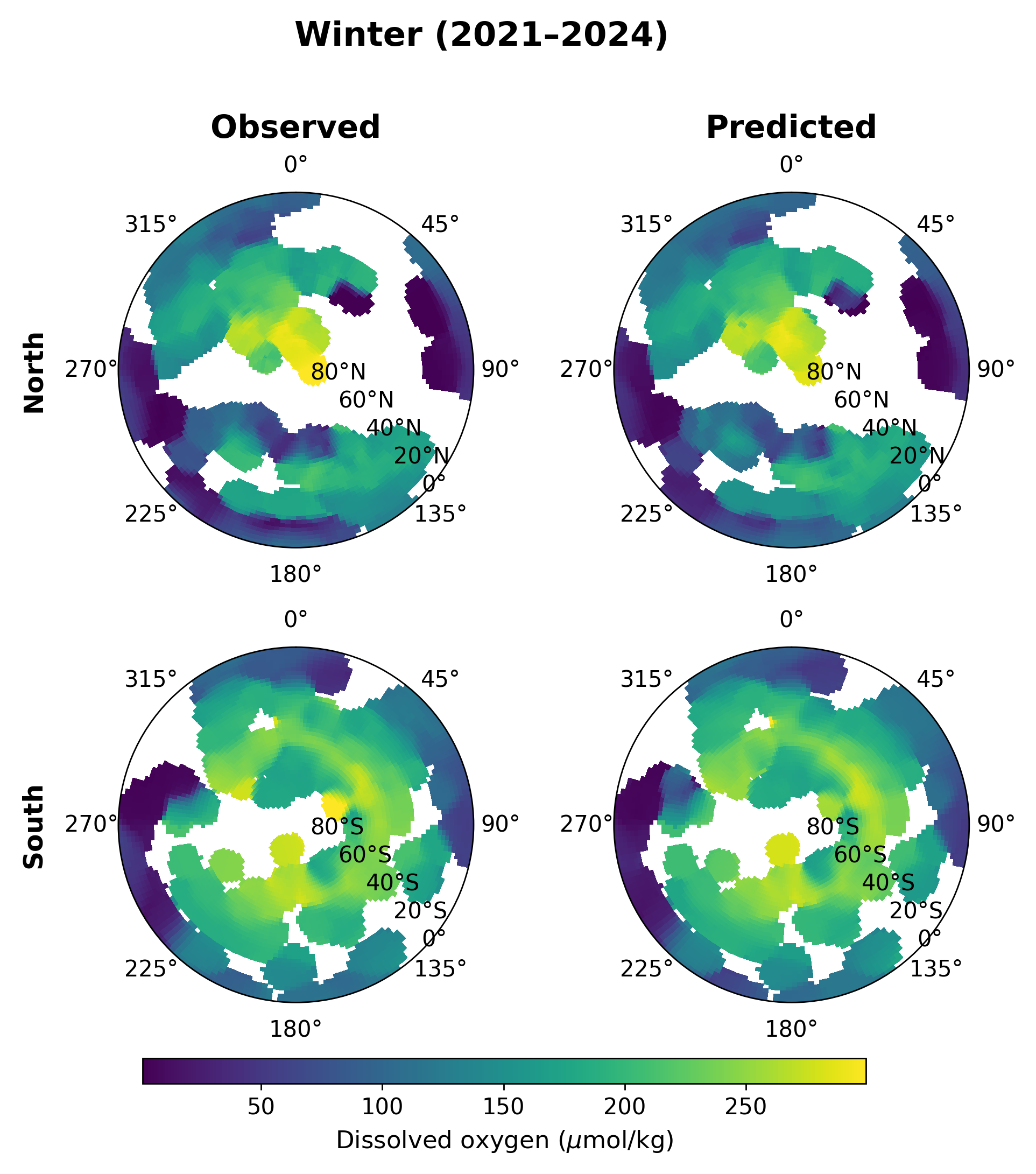}
\end{minipage}
\caption{Spatial comparison of observed and predicted dissolved oxygen concentrations at $200$ dbar in summer and winter.
Colors represent dissolved oxygen concentration.
Evaluation points are placed at $2^\circ$ intervals in latitude and longitude.
At each evaluation point, a Gaussian-kernel weighted average is computed from up to the $60$ nearest profiles within an angular distance of $8^\circ$, using spherical distance and bandwidth $4^\circ$.
The same profile locations, neighboring profiles, and weights are shared between the observed and predicted maps.
No estimate is shown when fewer than three profiles are available within $8^\circ$ of an evaluation point; the corresponding regions are shown in white.
Each estimate is displayed in the cell centered at the corresponding evaluation point, so the adjacent colored cells do not represent interpolation between evaluation points.}
\label{fig:argo_spatial_comparison_200dbar}
\end{figure}

Similar prediction results were obtained in \citet{giglio2018estimating}  and \citet{korte2025functional} based on different approaches. Direct comparisons with their results are not feasible since they considered different (training and testing) data. We are currently planning a study that will systematically compare all the approaches that deal with this problem.

\clearpage

\begingroup
\fontsize{10.2}{12.75}\selectfont  
\let\suppoldtabular\tabular
\renewcommand\tabular{\fontsize{10pt}{12.5pt}\selectfont\suppoldtabular}

\renewcommand{\thethm}{S.\arabic{thm}}
\renewcommand{\thelemma}{S.\arabic{lemma}}
\renewcommand{\theprop}{S.\arabic{proposition}}
\renewcommand{\theassumption}{S.\arabic{assumption}}
\renewcommand{\thedefn}{S.\arabic{defn}}
\renewcommand{\theexample}{S.\arabic{example}}
\renewcommand{\theremark}{S.\arabic{remark}}
\renewcommand{\thetable}{S.\arabic{table}}
\renewcommand{\thefigure}{S.\arabic{figure}} %
\renewcommand{\thesection}{S.\arabic{section}}
\renewcommand{\theequation}{S.\arabic{equation}}

\setcounter{thm}{0}
\setcounter{assumption}{0}
\setcounter{defn}{0}
\setcounter{example}{0}
\setcounter{remark}{0}
\setcounter{table}{0}
\setcounter{figure}{0}
\setcounter{section}{0}
\setcounter{equation}{0}

\noindent
\begin{center}\large\bfseries Supplementary Material for Function-On-Function Regression Through Separable Neural Operators
\end{center}
\vskip.2cm
This Supplement contains the proof of Theorem~\ref{t:rate} as well as certain technical and computational details for the previous sections. For clarity, all the labels and equation numbers in the Supplement will be prefixed by S. A notation summary is provided in Section \ref{s:notation} for easy reference.

\section{Proof of Theorem \ref{t:rate}}

The proof of Theorem~\ref{t:rate} relies on Propositions~\ref{p:rate_3}--\ref{p:hatRn} below. The auxiliary lemmas needed for these propositions are collected in Section \ref{s:auxiliary}, and the propositions themselves are proved in Sections \ref{ss:p:rate_3}-\ref{ss:p:hatRn}. These propositions compare the predictive risk defined by \eqref{e:pred_risk}:
$$
\CR\left(\wh G_{n,u},G_{0,n,u}\right) = \E\left[\left\|\wh G_{n,u}(X)-G_{0,n,u}(X)\right\|_{\bbL^2}^2\right]
$$
with two intermediate quantities. To describe those quantities, 
we first define the following empirical norms:
\begin{itemize}
\item 
The vector norm of $G(x)$ based on the output grid $\{t_\ell\}$: $\|G(x)\|_L = \left(\sum_{\ell=1}^{L} w_\ell \left(G(x)(t_\ell)\right)^2\right)^{1/2}$.
\item 
The empirical norm of $G$ over the training sample $\{X_i\}_{i=1}^n$ and output grid $\{t_\ell\}_{\ell=1}^L$: \[\|G\|_n = \left(\frac1n\sum_{i=1}^n \sum_{\ell=1}^{L} w_\ell \big(G(X_i)(t_{\ell})\big)^2\right)^{1/2}
= \left(\frac1n\sum_{i=1}^n \|G(X_i)\|_L^2
\right)^{1/2}.\]
\end{itemize}
Here $\{w_\ell\}_{\ell=1}^L$ denotes the quadrature weights associated with
the common output grid $\{t_\ell\}_{\ell=1}^L$, as defined in~\eqref{e:w_1}.
For a dense grid $\{t_\ell\}$, $\|G(x)\|_L$ approximates $\|G(x)\|_{\bbL^2}$, and if, additionally, $n$ is large, then
$\|G\|_n$ approximates $\left(\E\|G(X)\|_{\bbL^2}^2\right)^{1/2}$.

The first intermediate quantity in the analysis of $\CR\left(\wh G_{n,u},G_{0,n,u}\right)$ is based on the $\|\cdot\|_n$-norm :
\begin{align} \label{e:hat_R_n}
\wh R_n(\wh G_{n,u},G_{0,n,u}) := \E\left[ \left\|\wh G_{n,u}-G_{0,n,u}\right\|_n^2\right],
\end{align}
and the second is the corresponding population quantity based on the discrete $\|\cdot\|_L$-norm :
\begin{align} \label{e:hat_R_L}
R\left(\wh G_{n,u},G_{0,n,u}\right) := \E\left[\left\|\wh G_{n,u}(X)-G_{0,n,u}(X)\right\|_L^2\right],
\end{align}
where $X$ is an independent copy of the predictor. In~\eqref{e:hat_R_n}, the expectation is taken with respect to the training sample, while in~\eqref{e:hat_R_L} it is taken with respect to both the training sample and the independent copy $X$.

The proof proceeds by comparing
$\CR(\wh G_{n,u},G_{0,n,u})$ with $R(\wh G_{n,u},G_{0,n,u})$ (Proposition \ref{p:rate_3}),
then comparing $R(\wh G_{n,u},G_{0,n,u})$ with
$\widehat R_n(\wh G_{n,u},G_{0,n,u})$ (Proposition \ref{p:rate_2}), and finally bounding
$\widehat R_n(\wh G_{n,u},G_{0,n,u})$ (Proposition \ref{p:hatRn}).
To state the propositions, define the following quantities. Recall that $\|G\|_{u,\infty}\le Bu^{\nu}$ for
$G\in\CG_{n,u}$, while Assumption~\ref{a:G_0}(i) gives the growth bound
$\|G_0\|_{u,\infty}\le C_0u^{\nu}$ for $u\ge1$. Set
\begin{align} \label{e:Delta}
\begin{split}
B_u &:= 2 (B+C_0) u^{\nu}, \\
\Delta_{n,u,\epsilon}
&:= 4 C_0^2\,(u+\epsilon)^{2\nu}\epsilon^{2\alpha} + 4 \E\left[\ind{\|X\|_{\sup,S_{m_n}} \le u < \|X\|_{\sup}} \|G_0(X)\|_{\sup}^2\right] \\
&\quad + 4(B+2C_0)^2 u^{2\nu}
\exp\left(-\frac{\epsilon^2}{8\lambda^2 \rho^{2\beta}_{m_n}} \right), \\
\Omega_{n,u,\delta,\epsilon} &:= \sqrt{n^{-1} \log \CN_{n,u,\delta}} \vee  \sqrt{\Delta_{n,u,\epsilon}}.
\end{split}
\end{align}
Here $C_0$ and $B$ are the constants defined in Assumption~\ref{a:G_0} and~\eqref{e:Gn}, respectively, $\CN_{n,u,\delta}$ is the $\delta$-covering number from Remark~\ref{r:cover_number} of Section \ref{s:theory}, and $m_n$ is the sequence of positive integers, with $m_n\uparrow\infty$, defined in~\eqref{e:m_n_1} below. 

\begin{prop} \label{p:rate_3}
Under Assumptions \ref{a:process}--\ref{a:network}, for each fixed $u\ge 1$, there exists a constant $C_u<\infty$, independent of $n$, such that for all $n$,
\begin{align*}
& \Big| \CR\left(\wh G_{n,u},G_{0,n,u}\right) - R\left(\wh G_{n,u},G_{0,n,u}\right) \Big|
\le C_u\, p_nq_nb_n^2\gamma_n\sigma_{b_n( r_n u +1)},
\end{align*}
where $\sigma_s := \sup_{|t|\le s}|\sigma(t)|$. 
\end{prop}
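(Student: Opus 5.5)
The plan is to prove the bound pointwise in the input function, conditional on the training sample, and then take expectations. Fix the training sample, so that $\wh G_{n,u}$ is a fixed element of $\CG_{n,u}$. Let $X$ be the independent copy and set $D(t):=\wh G_{n,u}(X)(t)-G_{0,n,u}(X)(t)$ and $f:=D^2$. Then
\[
\CR-R=\E\Big[\int_0^1 f(t)\,dt-\sum_{\ell=1}^L w_\ell f(t_\ell)\Big],
\]
so the task reduces to a deterministic quadrature-error bound for the cell-length rule \eqref{e:w_1}, integrated over $X$ and the sample.

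\textbf{Step 1 (quadrature error).} The weights $w_\ell$ are the lengths of cells $I_\ell$, bounded by midpoints, that partition $[0,1]$. Each cell contains $t_\ell$ and has length at most $\gamma_n$; this includes the boundary cells $[0,(t_1+t_2)/2]$ and $[(t_{L-1}+t_L)/2,1]$. Hence, for absolutely continuous $f$,
\[
\Big|\int_0^1 f-\sum_\ell w_\ell f(t_\ell)\Big|\le \sum_\ell \int_{I_\ell}|f(t)-f(t_\ell)|\,dt\le \gamma_n\int_0^1|f'|.
\]
With $f'=2DD'$ this gives a bound of $2\gamma_n\|D\|_{\sup}\,\esssup|D'|$.

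\textbf{Step 2 (derivative of the network).} If $\|X\|_{\sup,S_n}>u$, then both $\wh G_{n,u}(X)$ and $G_{0,n,u}(X)$ vanish and there is nothing to bound. On the event $\|X\|_{\sup,S_n}\le u$, use the sparsity constraint: at most $r_n$ of the $\xi_{ij}^k$ are nonzero, and each is bounded by $b_n$. Therefore
\[
\Big|\sum_j\xi_{ij}^kX(s_{n,j})+\theta_i^k\Big|\le b_n(r_nu+1),
\]
so each coefficient satisfies $|c_k(X)|\le q_nb_n\sigma_{b_n(r_nu+1)}$. Since $\sigma$ is Lipschitz with constant $C_\sigma$, the map $t\mapsto\sigma(w_kt+\zeta_k)$ is absolutely continuous with derivative bounded by $C_\sigma b_n$. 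It follows that
\[
\esssup_t|\wh G_{n,u}(X)'(t)|\le C_\sigma p_nq_nb_n^2\sigma_{b_n(r_nu+1)},
\]
uniformly over $\CG_{n,u}$. For the other term, $|G_{0,n,u}(X)'|\le \esssup|G_0(X)'|$, which is square-integrable by Assumption~\ref{a:G_0}(ii).

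\textbf{Step 3 (sup-norm of $D$ and assembly).} Bound $\|\wh G_{n,u}(X)\|_{\sup}\le Bu^\nu$ using $\|G\|_{u,\infty}\le Bu^{\nu}$. A subtlety here is that the indicator only controls $X$ on the grid $S_n$, not on all of $[0,1]$. I would handle this by evaluating $\wh G_{n,u}$ at a continuous function that agrees with $X$ on $S_n$ and has sup-norm at most $u$, such as the linear interpolant of the grid values clipped to $[-u,u]$; membership in $\CG_{n,u}$ only sees grid values, so this gives the same output. For $G_{0,n,u}$, use the growth bound \eqref{e:growth_G0}: $\|G_{0,n,u}(X)\|_{\sup}\le C_0(1\vee\|X\|_{\sup}^\nu)$, which has finite moments of all orders by Remark~\ref{r:process}(c).

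Combining Steps 1 and 2 with Cauchy--Schwarz gives
\[
|\CR-R|\le 2\gamma_n\,\E^{1/2}\|D\|_{\sup}^2\Big(C_\sigma p_nq_nb_n^2\sigma_{b_n(r_nu+1)}+\E^{1/2}\big[\esssup|G_0(X)'|^2\big]\Big).
\]
Finally, absorb the second summand into the first. Since $\sigma$ is non-constant, $\sigma_s\ge\sigma_1>0$ for $s\ge1$, and $b_n(r_nu+1)\ge1$ and $p_n,q_n,b_n\ge1$, so the second summand is at most a constant times $p_nq_nb_n^2\sigma_{b_n(r_nu+1)}$. This yields the claimed constant $C_u$.

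The main obstacle I expect is the one flagged in Step 3: the uniform sup bound on the network and the behaviour of $D$ off the grid when only grid values are controlled. Everything else is elementary quadrature and Lipschitz bookkeeping.
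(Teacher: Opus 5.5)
Your proposal is correct and follows essentially the same route as the paper. You bound the cell-length quadrature error by $\gamma_n$ times the derivative of $D^2$, and control $\esssup|\wh G_{n,u}(X)'|$ uniformly over $\CG_{n,u}$ via sparsity and the Lipschitz property of $\sigma$. You then use $\|G\|_{u,\infty}\le Bu^{\nu}$ together with the moment bounds on $G_0(X)$ and $G_0(X)'$ and Cauchy--Schwarz, and absorb the constant term. Your interpolation argument for the off-grid subtlety makes rigorous what the paper only asserts in words. Two cosmetic points: the boundary cell $[0,(t_1+t_2)/2]$ can have length up to $3\gamma_n/2$, which only changes the constant; and $\sigma_1>0$ does not follow from non-constancy alone. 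Still, $\sigma\not\equiv 0$ and $b_n\uparrow\infty$ keep $\sigma_{b_n(r_nu+1)}$ bounded away from zero for large $n$, which is all the absorption step needs (the paper's absorption step relies on this too).
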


\begin{prop} \label{p:rate_2}
Under Assumptions~\ref{a:process}--\ref{a:network}, for each fixed $u\ge1$
and for any fixed $0<\epsilon\le 1$ and $\delta>0$, the following bound holds for
all sufficiently large $n$:
\begin{align} \label{e:R-Rhat_1}
\begin{split}
R\left(\wh G_{n,u}, G_{0,n,u}\right) & \le 2 \widehat{R}_n\left(\wh G_{n,u}, G_{0,n,u}\right) + 2 C_1B_u^2\Omega_{n,u,\delta,\epsilon}\left(\sqrt{\frac{\log \CN_{n,u,\delta}}{n}}+\delta\right)  \\
& \quad + C_2B_u^4 \left(\Omega_{n,u,\delta,\epsilon}^2 +n^{-1}\right) + 4(B_u\delta + \delta^2)
\end{split}
\end{align}
for some universal constants $C_1,C_2 < \infty$, where $B_u$ and $\Omega_{n,u,\delta,\epsilon}$ are as defined in \eqref{e:Delta}.
\end{prop}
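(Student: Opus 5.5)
The plan is to follow the standard route for comparing population and empirical risks of a least-squares sieve estimator over a uniformly bounded class, as in Györfi et al.\ or Lemma~4 of \cite{schmidt2020nonparametric}. There is one extra ingredient: the target $G_{0,n,u}$ is \emph{not} uniformly bounded by $C_0u^\nu$. The truncation in \eqref{e:G0u} only controls $\|x\|_{\sup,S_n}$, not $\|x\|_{\sup}$, so $G_0(X)$ can be large when $X$ exceeds $u$ between grid points.

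\textbf{Step 1: bounded surrogate for the target.} I would construct an operator $\widetilde G_{0}$ with $\|\widetilde G_0\|_{u,\infty}$-type bound $C_0(u+\epsilon)^\nu\le B_u/2$ on the truncated domain, and with
\[
\E\bigl[\|G_{0,n,u}(X)-\widetilde G_0(X)\|_L^2\bigr]\le \Delta_{n,u,\epsilon}.
\]
The natural candidate is $G_0$ evaluated at a kriged (or clipped) reconstruction of $X$ from the coarser nested grid $S_{m_n}\subset S_n$, clipped to sup norm at most $u+\epsilon$. The error then splits according to three events, matching the three terms of $\Delta_{n,u,\epsilon}$:
\begin{itemize}
\item[(a)] On the event that the reconstruction residual is at most $\epsilon$, the Hölder bound \eqref{e:smooth_G0} at level $u+\epsilon$ gives the term $C_0^2(u+\epsilon)^{2\nu}\epsilon^{2\alpha}$.
\item[(b)] On the event $\{\|X\|_{\sup,S_{m_n}}\le u<\|X\|_{\sup}\}$, the growth bound leaves the term $\E[\ind{\cdot}\|G_0(X)\|_{\sup}^2]$.
\item[(c)] On the event that the residual exceeds $\epsilon$, Lemma~\ref{l:GaussianKriging} together with \eqref{e:rho_1} gives probability at most $\exp(-\epsilon^2/(8\lambda^2\rho_{m_n}^{2\beta}))$, multiplied by the squared bound $(B+2C_0)^2u^{2\nu}$.
\end{itemize}
The same comparison applies to the empirical norm, with the same bound in expectation. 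The factor $4$ in $\Delta_{n,u,\epsilon}$ comes from $(a+b)^2\le 2a^2+2b^2$ applied twice.

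\textbf{Step 2: ratio-type uniform deviation over a cover.} Take a $\delta$-cover $\{G^{(1)},\dots,G^{(N)}\}$ of $\CG_{n,u}$ in $\|\cdot\|_{u,\infty}$, with $N=\CN_{n,u,\delta}$. For each $j$, set $g_j(x):=V(x)\|G^{(j)}(x)-\widetilde G_0(x)\|_L^2$, where $V(x)=\ind{\|x\|_{\sup,S_n}\le u}$. Then $0\le g_j\le B_u^2$ and $\var(g_j)\le B_u^2\E g_j$.

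Bernstein's inequality and a union bound give, with probability at least $1-e^{-t}$, simultaneously for all $j$,
\[
\E g_j-\frac1n\sum_i g_j(X_i)\le \sqrt{2B_u^2\E g_j\,(\log N+t)/n}+B_u^2(\log N+t)/(3n).
\]
Using $\sqrt{ab}\le a/4+b$ turns this into $\E g_j\le 2\cdot\frac1n\sum_i g_j(X_i)+CB_u^2(\log N+t)/n$. Integrating over $t$ yields the $n^{-1}$ term. Since $\hat G_{n,u}$ depends on the sample, I would apply this at the random index $j^*$ whose cover element is nearest to $\hat G_{n,u}$. Here the sample $X_i$ is independent of the measurement and process errors in the $X$-part, so only the $X_i$ enter these functions.

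\textbf{Step 3: reassembly.} Pass from $G^{(j^*)}$ to $\hat G_{n,u}$ using $|\,\|a\|^2-\|b\|^2|\le\|a-b\|(\|a\|+\|b\|)$ with $\|a-b\|\le\delta$. Since $\sum_\ell w_\ell=1$, this gives the $4(B_u\delta+\delta^2)$ term. Then pass from $\widetilde G_0$ back to $G_{0,n,u}$ in both the population and empirical norms by the same inequality. The cross terms take the form $\sqrt{\Delta}\times(\text{risk})^{1/2}$.

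Those cross terms are where $\Omega_{n,u,\delta,\epsilon}$ enters. I would bound the empirical risk term $(\text{risk})^{1/2}$ itself by $B_u\bigl(\sqrt{\log N/n}+\delta\bigr)$-type quantities, or absorb it via $2xy\le x^2+y^2$. That produces the products $B_u^2\,\Omega\,(\sqrt{\log N/n}+\delta)$ and the square $B_u^4\Omega^2$.

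\textbf{Main obstacle.} I expect the main obstacle to be Step 1: choosing the surrogate so that it lies within the $B_u$ envelope while its discrepancy is controlled by exactly the kriging-concentration terms. This needs the nested-grid device through $m_n$, and some care that conditioning on the grid values $S_{m_n}$ does not interfere with the truncation event $V$ defined on $S_n$. The phrase ``for all sufficiently large $n$'' is needed so that $m_n$ is large enough for Lemma~\ref{l:GaussianKriging} to apply and for $\rho_{m_n}$ to be small.
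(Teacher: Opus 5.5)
Your route is genuinely different from the paper's, and Step~1 is in fact more careful than the paper. Step~3, however, needs a repair before the argument gives the stated bound.

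\textbf{Comparison with the paper.} The paper follows step (I) of Lemma~4 in \citet{schmidt2020nonparametric}:
it introduces a ghost sample $X_i'$ and normalizes the differences $g_j(X_i,X_i')$ by local radii $r_j$. It puts $G_{m_n,u,\epsilon}$ into the cover so that $r_\star\le\Omega_{n,u,\delta,\epsilon}$, and this is the only place $\Omega_{n,u,\delta,\epsilon}$ enters. It then bounds $\E T$ and $\E T^2$ by a two-sided Bernstein inequality plus a union bound, and handles $\E[TU]$ by Cauchy--Schwarz with $\E U^2=R(\wh G_{n,u},G_{0,n,u})$. Finally it solves $a\le b+2\sqrt a\,c+d$, which is where the factor $2$ comes from.

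Your one-sided Bernstein bound for nonnegative losses bounded by $B_u^2$ gives the multiplicative comparison directly, with no ghost sample, no local radii and no quadratic step. Once Step~3 is repaired as below, the remainder is $C[\Delta_{n,u,\epsilon}+B_u^2(\log\CN_{n,u,\delta}+1)/n+B_u\delta+\delta^2]$. Since $\Delta_{n,u,\epsilon}\le\Omega_{n,u,\delta,\epsilon}^2$ and $n^{-1}\log\CN_{n,u,\delta}\le\Omega_{n,u,\delta,\epsilon}\sqrt{n^{-1}\log\CN_{n,u,\delta}}$, this implies \eqref{e:R-Rhat_1} up to the values of the constants. That is all the proof of Theorem~\ref{t:rate} uses.

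\textbf{Step~1.} You correctly spot something the paper passes over. In \eqref{e:RRhat_diff_2}, \eqref{e:T_max} and \eqref{e:T_var}, the paper bounds $\|G_j(X_i)-G_{0,n,u}(X_i)\|_L$ by $B_u/2$ for every realization, using $\|G_{0,n,u}\|_{u,\infty}\le C_0u^\nu$. That norm only sees inputs with $\|x\|_{\sup}\le u$. Yet $G_{0,n,u}(X_i)=G_0(X_i)$ on $\{\|X_i\|_{\sup,S_n}\le u<\|X_i\|_{\sup}\}$, which in general has positive probability.

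A deterministic bounded surrogate is the right repair, but you need not build one.
\begin{itemize}
\item $G_{m_n,u,\epsilon}\in\CG_{n,u}$ depends on $x$ only through grid values, so it is bounded by $Bu^\nu$ on all of $\{\|x\|_{\sup,S_n}\le u\}$.
\item \eqref{e:ESn1} already controls $\E\|G_{m_n,u,\epsilon}(X)-G_{0,n,u}(X)\|_{\sup}^2$, so no new kriging or nested-grid argument is needed.
\item If you keep your own construction, clip pointwise at level $u$, not $u+\epsilon$. Projection onto $[-u,u]$ does not increase the distance to any $x$ with $\|x\|_{\sup}\le u$, and the envelope becomes $C_0u^\nu\le B_u/2$. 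By contrast, $C_0(u+\epsilon)^\nu\le B_u/2$ fails, e.g., for $B=C_0$, $u=\epsilon=1$, $\nu>1$.
\item With either surrogate, the event $\{\|X\|_{\sup,S_{m_n}}\le u<\|X\|_{\sup}\}$, where the surrogate need not vanish, contributes an extra term of order $u^{2\nu}\pr(\|X\|_{\sup,S_{m_n}}\le u<\|X\|_{\sup})$. This is not literally part of $\Delta_{n,u,\epsilon}$, but it tends to zero. The paper's derivation of \eqref{e:ESn1} drops an analogous term.
\end{itemize}

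\textbf{Step~3 needs repair.} Your first option is not available: you cannot bound $\widehat R_n(\wh G_{n,u},G_{0,n,u})^{1/2}$ by $B_u(\sqrt{n^{-1}\log\CN_{n,u,\delta}}+\delta)$-type quantities. That risk contains the approximation error and is not of that order.

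So the cross terms $\sqrt{\Delta_{n,u,\epsilon}\cdot\mathrm{risk}}$ must be absorbed. But Step~2 as written already spends the whole factor $2$, so absorbing them would push the coefficient of $\widehat R_n$ above $2$. Build in slack as follows.
\begin{itemize}
\item With $s:=(\log\CN_{n,u,\delta}+t)/n$, use $\sqrt{2B_u^2s\,\E g_j}\le\theta\,\E g_j+B_u^2s/(2\theta)$. This gives $\E g_j\le(1-\theta)^{-1}n^{-1}\sum_i g_j(X_i)+C_\theta B_u^2 s$.
\item In the two surrogate conversions, use $\|a+b\|_L^2\le(1+\eta)\|a\|_L^2+(1+\eta^{-1})\|b\|_L^2$.
\item Choose $\theta,\eta$ with $(1+\eta)^2/(1-\theta)\le 2$.
\end{itemize}
This yields the remainder stated above. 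You do not need to reproduce the $\Omega_{n,u,\delta,\epsilon}$-products; they are artifacts of the paper's route through $r_\star\le\Omega_{n,u,\delta,\epsilon}$.
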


\begin{prop} \label{p:hatRn}
Under Assumptions~\ref{a:process}--\ref{a:network}, for each fixed $u\ge1$
and for any fixed $0<\epsilon\le 1$ and $\delta>0$, the following bound holds for
all sufficiently large $n$:
\begin{align} \label{e:step_1_bound_1}
\begin{split}
&\widehat{R}_n\left(\wh G_{n,u}, G_{0,n,u}\right) \\
& \le 2 \Delta_{n,u,\epsilon}+ 4\delta \sigma_{\varepsilon,e} + 4 B_{\varepsilon,e} \delta \sqrt{\frac{3 \log \CN_{n,u,\delta}+1}{n}} + \frac{4 B_{\varepsilon,e}^2 (3 \log \CN_{n,u,\delta}+1)}{n},
\end{split}
\end{align}
where $\Delta_{n,u,\epsilon}$ is defined in~\eqref{e:Delta}, 
\begin{align} \label{e:B_varepsilon_e}
\sigma_{\varepsilon,e} := \sqrt{\sup_{t\in [0,1]} C_\varepsilon(t,t)+\sigma_e^2}, \quad
\mbox{and} \quad B_{\varepsilon,e}:=\sqrt{B_{\varepsilon}^2+\sigma_e^2},
\end{align}
with $B_{\varepsilon}$ and $\sigma_e^2$ defined in Lemma \ref{l:eigen} and Assumption \ref{a:sampling}, respectively.
\end{prop}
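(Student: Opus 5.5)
The plan is to run the standard least-squares ``basic inequality'' argument. We compare $\wh G_{n,u}$ with a well-chosen deterministic approximant $G^*\in\CG_{n,u}$ and control the resulting noise cross term by a covering argument over $\CG_{n,u}$. Write $\eta_{i,\ell}:=\varepsilon_i(t_\ell)+e_{i,\ell}$, so that $Y_{i,\ell}=G_0(X_i)(t_\ell)+\eta_{i,\ell}$. Every $G\in\CG_{n,u}$ satisfies $G(X_i)=V_{i,n,u}G(X_i)$, and $G_{0,n,u}(X_i)=V_{i,n,u}G_0(X_i)$. Hence the objective in \eqref{e:G_hat_G_tilde} equals $\|G-G_{0,n,u}\|_n^2$ minus twice the weighted noise inner product, up to terms not depending on $G$. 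Optimality of $\wh G_{n,u}$ against $G^*$ then gives
\[
\|\wh G_{n,u}-G_{0,n,u}\|_n^2\le \|G^*-G_{0,n,u}\|_n^2+\frac2n\sum_{i=1}^n V_{i,n,u}\sum_{\ell=1}^L w_\ell\,\eta_{i,\ell}\,\bigl(\wh G_{n,u}-G^*\bigr)(X_i)(t_\ell).
\]

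\textbf{Approximation term.} I would construct $G^*$ so that $\E\|G^*-G_{0,n,u}\|_n^2\le\Delta_{n,u,\epsilon}$ (up to the constant absorbed in the factor $2$).
\begin{enumerate}
\item Fix the coarse nested grid $S_{m_n}\subset S_n$, with $m_n\uparrow\infty$ slowly as in \eqref{e:m_n_1}. Let $\Pi x$ denote the reconstruction (piecewise linear interpolant or kriging predictor) of $x$ from its values on $S_{m_n}$.
\item On the finite-dimensional compact set $\{(x(s))_{s\in S_{m_n}}:|x(s)|\le u\}$, the map $x\mapsto \ind{\|x\|_{\sup,S_{m_n}}\le u}\,G_0(\Pi x)$ is continuous by \eqref{e:smooth_G0}. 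Theorem~5 of \cite{chen1995universal} therefore provides a shallow separable network approximating it uniformly within $\epsilon$. Its parameters are fixed, and its sparsity is at most $|S_{m_n}|$.
\item Because $b_n,p_n,q_n,r_n\uparrow\infty$ and $m_n$ grows slowly enough, this network belongs to $\CG_{n,u}$ for large $n$; the sup bound holds since $B\ge C_0$.
\item The error then splits into three parts. The Hölder modulus \eqref{e:smooth_G0} applied on the event $\|X-\Pi X\|_{\sup}\le\epsilon$ gives the $C_0^2(u+\epsilon)^{2\nu}\epsilon^{2\alpha}$ term. The truncation mismatch gives the indicator term. The complementary event has probability bounded by the kriging concentration Lemma~\ref{l:GaussianKriging}, which yields the $\exp(-\epsilon^2/(8\lambda^2\rho_{m_n}^{2\beta}))$ term multiplied by $(B+2C_0)^2u^{2\nu}$.
\end{enumerate}

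\textbf{Noise term.} Let $\{G_1,\dots,G_N\}$, $N=\CN_{n,u,\delta}$, be a $\delta$-net of $\CG_{n,u}$ in $\|\cdot\|_{u,\infty}$, and let $G_J$ be the element nearest to $\wh G_{n,u}$.
\begin{enumerate}
\item Replacing $\wh G_{n,u}$ by $G_J$ costs at most $2\delta\,\E\bigl[n^{-1}\sum_i\sum_\ell w_\ell|\eta_{i,\ell}|\bigr]\le 2\delta\sigma_{\varepsilon,e}$, by Cauchy--Schwarz and $\sum_\ell w_\ell=1$.
\item For each fixed $j$, conditionally on the $X_i$, the normalized variable
\[
Z_j:=\frac{n^{-1}\sum_i V_{i,n,u}\sum_\ell w_\ell\,\eta_{i,\ell}\,(G_j-G^*)(X_i)(t_\ell)}{\|G_j-G^*\|_n}
\]
is centered Gaussian with variance at most $B_{\varepsilon,e}^2/n$. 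This uses the independence of $\varepsilon_i$, $e_{i,\ell}$ from the $X_i$, the eigenvalue bound $B_\varepsilon$ of Lemma~\ref{l:eigen} for the quadratic form $\sum_{\ell,\ell'}w_\ell w_{\ell'}a_\ell a_{\ell'}C_\varepsilon(t_\ell,t_{\ell'})$, and $\mathrm{Var}(e_{i,\ell})\le\sigma_e^2$.
\item A Gaussian maximal/tail bound gives $\max_j Z_j^2\lesssim B_{\varepsilon,e}^2(3\log N+1)/n$ in expectation.
\item Bound $\|G_J-G^*\|_n\le\|\wh G_{n,u}-G_{0,n,u}\|_n+\|G^*-G_{0,n,u}\|_n+\delta$, and apply $2ab\le a^2/2+2b^2$ to absorb half of $\|\wh G_{n,u}-G_{0,n,u}\|_n^2$ into the left side.
\end{enumerate}
Taking expectations and multiplying by $2$ then produces exactly the terms $2\Delta_{n,u,\epsilon}$, $4\delta\sigma_{\varepsilon,e}$, $4B_{\varepsilon,e}\delta\sqrt{(3\log N+1)/n}$ and $4B_{\varepsilon,e}^2(3\log N+1)/n$.

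\textbf{Main obstacle.} I expect the approximation step to be the hardest part. The network must depend only on the grid values, and its truncation uses $\|x\|_{\sup,S_n}$, whereas the Hölder control of $G_0$ is stated in $\|\cdot\|_{\sup}$. One therefore has to match the indicator of $\CG_{n,u}$ with that in $G_{0,n,u}$ and handle the region where $\|X\|_{\sup}>u$. I would first try defining $G^*$ through the interpolant on $S_{m_n}$ and splitting on the events $\{\|X\|_{\sup}\le u\}$ and $\{\|X-\Pi X\|_{\sup}\le\epsilon\}$, invoking Lemma~\ref{l:GaussianKriging} for the remaining event.
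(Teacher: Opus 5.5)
Your overall architecture is the paper's. You use the basic inequality against the fixed Chen--Chen approximant (the paper's $G_{m_n,u,\epsilon}$, built from the reconstruction on the nested coarse grid $S_{m_n}$), a $\delta$-net of $\CG_{n,u}$, the conditional-Gaussian variance bound via Lemma~\ref{l:eigen}, and the maximal inequality $\E\max_j(\cdot)^2\le B_{\varepsilon,e}^2(3\log\CN_{n,u,\delta}+1)/n$. The approximation part is essentially the paper's. Take the Chen--Chen accuracy to be $C_0(u+\epsilon)^\nu\epsilon^\alpha$ rather than $\epsilon$, and use the kriging predictor so that Lemma~\ref{l:GaussianKriging} applies.

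The problem is the last step of the noise term, which does not produce ``exactly'' the four stated terms. You normalize by $\|G_j-G^*\|_n$, so the triangle inequality brings in $\|G^*-G_{0,n,u}\|_n$. After absorbing and multiplying by $2$, you are left with the extra term
\[
4\,\E\bigl[|Z_J|\,\|G^*-G_{0,n,u}\|_n\bigr]\le 4B_{\varepsilon,e}\sqrt{\Delta_{n,u,\epsilon}}\sqrt{\frac{3\log\CN_{n,u,\delta}+1}{n}},
\]
which does not appear in \eqref{e:step_1_bound_1}. Absorbing it by AM--GM instead inflates the constants, e.g.\ to $4\Delta_{n,u,\epsilon}$ and $6B_{\varepsilon,e}^2(3\log\CN_{n,u,\delta}+1)/n$. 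That weaker bound would still suffice for Theorem~\ref{t:rate}, but it is not the proposition as stated.

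The missing idea is the one the paper uses in Lemma~\ref{l:rate_ESn}: remove $G^*$ from the cross term \emph{in expectation} before discretizing. Since $G^*$ and $G_{0,n,u}$ are deterministic and $\eta_{i,\ell}$ is centered and independent of $X_i$, you have $\E[(G^*-G_{0,n,u})(X_i)(t_\ell)\,\eta_{i,\ell}]=0$. Hence the expected cross term equals $\frac{2}{n}\sum_i\sum_\ell w_\ell\,\E[(\wh G_{n,u}-G_{0,n,u})(X_i)(t_\ell)\,\eta_{i,\ell}]$. Now normalize by $\|G_j-G_{0,n,u}\|_n$, which needs only $\|G_J-G_{0,n,u}\|_n\le\|\wh G_{n,u}-G_{0,n,u}\|_n+\delta$.

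This replacement holds only in expectation. So take expectations of the basic inequality first, then carry out your pathwise bounds and the absorption $2ab\le a^2/2+2b^2$ inside $\E$, moving $\frac{1}{2}\widehat R_n$ to the left. This gives exactly
\[
2\Delta_{n,u,\epsilon}+4\delta\sigma_{\varepsilon,e}+4B_{\varepsilon,e}\delta\sqrt{(3\log\CN_{n,u,\delta}+1)/n}+4B_{\varepsilon,e}^2(3\log\CN_{n,u,\delta}+1)/n.
\]
The paper reaches the same constants by Cauchy--Schwarz and then solving $a^2\le b+(a+c)d$ for $a=\widehat R_n^{1/2}$.
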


The proofs of Propositions~\ref{p:rate_3}--\ref{p:hatRn} are deferred to Sections~\ref{ss:p:rate_3}--\ref{ss:p:hatRn}. 
We now apply these propositions to prove Theorem~\ref{t:rate}.

\begin{proof}[Proof of Theorem~\ref{t:rate}]
Fix $u\ge 1$ and let $0<\epsilon\le 1$ be fixed but arbitrary. Let $\delta=\delta_n\downarrow 0$ be a sequence such that
$n^{-1}\log \CN_{n,u,\delta}\to 0$,
whose existence is guaranteed by \eqref{e:metric_entropy} and Assumption~\ref{a:network}. Since
\begin{align*}
\CR\!\left(\wh G_{n,u},G_{0,n,u}\right)
&\le
\Big|
\CR\!\left(\wh G_{n,u},G_{0,n,u}\right)
-
R\!\left(\wh G_{n,u},G_{0,n,u}\right)
\Big|
+
R\!\left(\wh G_{n,u},G_{0,n,u}\right),
\end{align*}
Propositions~\ref{p:rate_3}--\ref{p:hatRn} yield
\begin{align*}
\CR\!\left(\wh G_{n,u},G_{0,n,u}\right)
&\le
4\!\left(\Delta_{n,u,\epsilon}+2\delta\,
\sigma_{\varepsilon,e}\right)
+ 8 B_{\varepsilon,e} \delta 
  \sqrt{\frac{3\log \CN_{n,u,\delta}+1}{n}}
+ \frac{8 B_{\varepsilon,e}^2\big(3\log \CN_{n,u,\delta}+1\big)}{n}   \\
&\quad
+ 2 C_1 B_u^{2}\,
   \Omega_{n,u,\delta,\epsilon}
   \!\left(
      \sqrt{\frac{\log \CN_{n,u,\delta}}{n}}
      +\delta
   \right)
+ C_2 B_u^{4}\!\left(
     \Omega_{n,u,\delta,\epsilon}^{2}
     + n^{-1}
   \right) \\
&\quad + 4(B_u\delta+\delta^{2})                                  
+ C_u \, p_nq_nb_n^2\gamma_n\sigma_{b_n( r_n u +1)}
\end{align*}
for all $n$, $0<\epsilon\le1$, and $\delta>0$. Absorbing constants independent of $n,u,\epsilon,\delta$ into a generic constant $C<\infty$, we obtain
\begin{align*} 
\CR\!\left(\wh G_{n,u},G_{0,n,u}\right)
\le C \Bigg[
& \Delta_{n,u,\epsilon}+\delta + \delta \sqrt{\log \CN_{n,u,\delta}\over n} + {\log \CN_{n,u,\delta}\over n} + B_u^2\Omega_{n,u,\delta,\epsilon}\left(\sqrt{\frac{\log \CN_{n,u,\delta}}{n}}+\delta\right)  \\
& \quad + n^{-1} + B_u^4 \left(\Omega_{n,u,\delta,\epsilon}^2 +n^{-1}\right) + (B_u\delta + \delta^2)\Bigg] + C_u \, p_nq_nb_n^2\gamma_n\sigma_{b_n( r_n u +1)}.
\end{align*}
By the definition of $\Delta_{n,u,\epsilon}$ in \eqref{e:Delta} and the fact that $\frac{\log \CN_{n,u,\delta}}{n}\to 0$ (see part (c) of Remark \ref{r:cover_number}),
we further obtain
\begin{align}\label{e:rate_bound}
\CR\!\left(\wh G_{n,u},G_{0,n,u}\right)
\le D_u\left[ \Delta_{n,u,\epsilon} +\delta
+\frac{\log \CN_{n,u,\delta}}{n} +n^{-1}
+ p_nq_nb_n^2\gamma_n\sigma_{b_n(r_nu+1)} \right],
\end{align}
where $D_u$ is a generic constant independent of $n,\delta,\epsilon$ and absorbs constants involving $B_u$ and $C_u$.
It remains to analyze $\Delta_{n,u,\epsilon}$.
Since $X$ has 
continuous sample paths (cf.~Remark \ref{r:process}), $\ind{\|X\|_{\sup,S_{m_n}} \le u < \|X\|_{\sup}}$ converges to $0$ a.s.\ as the grid $S_{m_n}$ becomes dense. 
By~\eqref{e:G_0_sup} and the Lebesgue Dominated Convergence Theorem,
$$
\lim_{n\to\infty}\E\left[\ind{\|X\|_{\sup,S_{m_n}} \le u < \|X\|_{\sup}} \|G_0(X)\|_{\sup}^2\right] = 0,
$$
which, along with $\rho^{2\beta}_{m_n}\to 0$, implies that
$$
\limsup_{n\to\infty} \Delta_{n,u,\epsilon} \le  4\, C_0^2\, (u+\epsilon)^{2\nu}\epsilon^{2\alpha} \quad \mbox{for every fixed $u\ge 1$ and $0<\epsilon\le 1$}.
$$
Combining this bound with \eqref{e:rate_bound}, and using
$\delta=\delta_n\downarrow0$,
$n^{-1}\log\CN_{n,u,\delta_n}\to0$, and
\[
p_nq_nb_n^2\gamma_n\sigma_{b_n( r_n u +1)}\to0,
\]
we obtain
$$
\limsup_{n\to\infty} \CR\left(\wh G_{n,u},G_{0,n,u}\right)
\le D_u \epsilon^{2\alpha}.
$$
Letting $\epsilon\downarrow 0$ now yields the desired result in Theorem~\ref{t:rate}.
\end{proof}

\section{Auxiliary lemmas} \label{s:auxiliary}

In this subsection, we collect a few results that facilitate the proofs of Propositions~\ref{p:rate_3}--\ref{p:hatRn}. The first result below gives a bound on the covering number of the network class $\CG_{n,u}$, which plays a critical role in the proofs. Recall that $\CN_{n,u,\delta}$ denotes the $\delta$-covering number of $\CG_{n,u}$ under the metric $\|\cdot\|_{u,\infty}$.

\begin{lemma}[Covering number of $\CG_{n,u}$] \label{l:covering}
Let $\|\cdot\|_{u,\infty}$ be defined by \eqref{e:infinity_norm}, and, for simplicity, we suppress $n$ in $p_n,q_n, r_n, b_n, J_n$.
Then
\begin{equation*}   
\CN_{n,u,\delta}
  \leq
  \left(\frac{C_1 bpq\sigma_{b(ru+1)}^2}{\delta}\right)^{pq}
  \cdot
  \left[\binom{J}{r}
    \left(\frac{C_2 b^2 pq (2ru+1) \sigma_{b(ru+1)}}{\delta}\right)^{r+1}
  \right]^{pq}
  \cdot
  \left(\frac{C_3 b^2 pq\sigma_{b(ru+1)}}{\delta}\right)^{2p},
\end{equation*}
where $C_1$, $C_2$, $C_3$ are constants independent of $n,u,\delta$, and where $\sigma_s = \sup_{|t|\le s}|\sigma(t)|$. Taking logarithms, the metric
entropy satisfies
\begin{align}   \label{e:metricentropy}
\begin{split}
\log \CN_{n,u,\delta}
  &\leq pq\log\frac{C_1 bpq\sigma_{b(ru+1)}^2}{\delta}
  + pq(r+1)\log\frac{C_2 b^2 pq(2ru+1)\sigma_{b(ru+1)}}{\delta} \\
  &\quad + pq\log\binom{J}{r}
  + 2p\log\frac{C_3 b^2 pq\sigma_{b(ru+1)}}{\delta}.
  \end{split}
\end{align}
\end{lemma}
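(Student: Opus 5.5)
The plan is a standard parameter-discretization argument. First choose a sparsity pattern, then grid each parameter block on $[-b,b]$, and finally show that the map from parameters to operators is Lipschitz in $\|\cdot\|_{u,\infty}$ with explicit constants. Write a generic element as $G(x)(t)=\ind{\|x\|_{\sup,S_n}\le u}\sum_{k=1}^p\sum_{i=1}^q c_i^k\,\sigma(a_i^k(x))\,\sigma(w_kt+\zeta_k)$, where $a_i^k(x)=\sum_j\xi_{ij}^k x(s_j)+\theta_i^k$. On the truncated domain $\|x\|_{\sup}\le u$ (which implies $\|x\|_{\sup,S_n}\le u$) and with at most $r$ nonzero $\xi_{ij}^k$, we have $|a_i^k(x)|\le b(ru+1)$. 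Also $|w_kt+\zeta_k|\le 2b$, which is at most $b(ru+1)$ once $ru\ge1$. Hence every factor $\sigma(\cdot)$ is bounded by $\sigma_{b(ru+1)}$. Since the indicator is common to all $G$, it plays no role in the differences.

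The key step is a telescoping perturbation bound. Take two parameter sets with the same sparsity pattern and with entrywise differences at most $\eta_c,\eta_\xi,\eta_w$ in the respective blocks. Using $|\sigma(a)-\sigma(a')|\le L_\sigma|a-a'|$, $\|\cdot\|_{u,\infty}$ of the difference is bounded by three terms:
\[
pq\,\eta_c\,\sigma_{b(ru+1)}^2,\qquad
pq\,b\,L_\sigma (ru+1)\,\eta_\xi\,\sigma_{b(ru+1)},\qquad
pq\,b\,L_\sigma\,2\eta_w\,\sigma_{b(ru+1)}.
\]
The first comes from perturbing $c_i^k$. The second comes from perturbing the $r+1$ active inner weights and the bias, since the change in $a_i^k$ is at most $(ru+1)\eta_\xi$, multiplied by $|c_i^k|\le b$ and the outer $\sigma$ bound. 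The third comes from perturbing $(w_k,\zeta_k)$, multiplied by $\sum_i|c_i^k\sigma(\cdot)|\le qb\sigma_{b(ru+1)}$, summed over $p$ indices $k$. To make each term at most $\delta/3$, choose $\eta_c\asymp \delta/(pq\sigma^2_{b(ru+1)})$, $\eta_\xi\asymp\delta/(pqb(ru+1)\sigma_{b(ru+1)})$ and $\eta_w\asymp\delta/(pqb\,\sigma_{b(ru+1)})$. A grid of spacing $\eta$ on $[-b,b]$ has about $2b/\eta$ points. This produces the bases $C_1bpq\sigma^2/\delta$ with exponent $pq$, $C_2b^2pq(2ru+1)\sigma/\delta$ with exponent $(r+1)pq$, and $C_3b^2pq\sigma/\delta$ with exponent $2p$, respectively. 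The sparsity patterns contribute the factor $\binom{J}{r}^{pq}$: I would use patterns of exact size $r$ (padding with zero coefficients), absorbing smaller supports by allowing zeros in the grid. Multiplying these counts gives the bound, and taking logarithms gives \eqref{e:metricentropy}.

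The main subtlety is that $\CG_{n,u}$ also imposes $\|G\|_{u,\infty}\le Bu^\nu$, so the grid points need not lie in the class. To handle this, I would first build a $\delta/2$-cover of the unconstrained parametric class, which contains $\CG_{n,u}$. I would then replace each center whose $\delta/2$-ball meets $\CG_{n,u}$ by a member of the class, which gives a $\delta$-cover made of class elements. This changes only the constants, which are absorbed into $C_1,C_2,C_3$. A secondary check is that the constants depend on $L_\sigma$ only, and not on $n,u,\delta$. This holds because every $u$- and $b$-dependence has been kept explicit above, and $2ru+1\ge ru+1$ dominates the factor coming from the $\xi$-perturbation.
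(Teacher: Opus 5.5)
Your proposal is correct and follows essentially the same route as the paper: a Lipschitz perturbation bound via the three-term telescoping decomposition, an equal split of the $\delta$-budget across the $c$, $(\xi,\theta)$ and $(w,\zeta)$ blocks, and a $\binom{J}{r}$ count of active sets combined with grids on $[-b,b]$. The differences are only refinements. First, you compare parameter sets that share the same active set, which gives the factor $ru+1$; the paper compares arbitrary supports whose union has size at most $2r$, which gives $2ru+1$. Second, you explicitly turn the external parameter-grid cover into one whose centers lie in $\CG_{n,u}$. This step is needed because of the constraint $\|G\|_{u,\infty}\le Bu^{\nu}$, and the paper's later arguments implicitly rely on it when they take $\{G_j\}\subset\CG_{n,u}$, although the paper's proof of this lemma leaves the point unaddressed.
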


If $\sigma$ is bounded, for example when $\sigma$ equals sigmoid or tanh, then $\sigma_{b(ru+1)}=O(1)$.
If $\sigma$ is ReLU, then $\sigma_{b(ru+1)}=O\{b(ru+1)\}$, and this growth is accounted for in the
corresponding entropy bound and the sieve complexity conditions.

\begin{proof}
Let $G\in\CG_{n,u}$. On the domain $\{\|x\|_{\sup}\le u\}$, the truncation
indicator $\ind{\|x\|_{\sup,S_n}\le u}$ is equal to one, and hence $G$ can be written as
$$
G(x)(t) = \sum_{k=1}^p\sum_{i=1}^q c_i^k\, f_i^k(x)\, h_k(t),
$$
where, for convenience, $x_j:=x(s_j)$, and
\begin{align*}
  f_i^k(x):= \sigma\!\left(\sum_{j=1}^{J}\xi_{ij}^k x_j + \theta_i^k\right), \qquad
  h_k(t):= \sigma(w_k t + \zeta_k).
\end{align*}
Let $G$ and $\tilde G$ be defined by the two sets of parameters $(c_i^k, \xi_{ij}^k, \theta_i^k, w_k, \zeta_k)$ and 
$(\tilde c_i^k, \tilde\xi_{ij}^k, \tilde\theta_i^k, \tilde w_k, \tilde\zeta_k)$, respectively, 
both satisfying the sparsity constraint 
\begin{equation}
  \sum_{j=1}^{J}\ind{\xi_{ij}^k\neq 0} \leq r,
  \quad \sum_{j=1}^{J}\ind{\tilde\xi_{ij}^k\neq 0} \leq r
  \quad\text{for all } i=1,\ldots,q,\; k=1,\ldots,p.
  \label{eq:sparsity}
\end{equation}
Fix $x$ with $\norm{x}_{\sup,S_n}\leq u$. Let
\begin{equation}
  \delta_1 := \max_{i,j,k}\max\!\left(
    \abs{\xi_{ij}^k - \tilde\xi_{ij}^k},\;
    \abs{\theta_i^k - \tilde\theta_i^k}
  \right).
  \label{eq:delta_xi}
\end{equation}
Define  $C_\sigma:=\esssup_{t\in\mathbb{R}} |\sigma'(t)|$.
Then
\begin{align}
 & \abs{f_i^k(x) - \tilde f_i^k(x)}
  \leq C_\sigma\left(
    \sum_{j=1}^{J}\abs{\xi_{ij}^k - \tilde\xi_{ij}^k}\cdot\abs{x_j}
    + \abs{\theta_i^k - \tilde\theta_i^k}
  \right) \notag\\
  &\leq C_\sigma\left(
    \sum_{j:\xi_{ij}^k\neq 0\text{ or }\tilde\xi_{ij}^k\neq 0}
    \delta_1 \cdot u + \delta_1
  \right)  
  \leq C_\sigma(2 ru\,\delta_1 + \delta_1)
  = C_\sigma(2ru+1)\,\delta_1,
  \label{eq:f_perturb}
\end{align}
where we used $\abs{x_j}\leq u$ and that the union of the two sparse
supports has at most $2r$ elements. 

Next, define
\begin{equation*}
  \delta_2 := \max_k\max\!\left(\abs{w_k - \tilde w_k},\;\abs{\zeta_k - \tilde\zeta_k}\right).
\end{equation*}
For $t\in[0,1]$,
\begin{equation}
  \abs{h_k(t) - \tilde h_k(t)}
  \leq C_\sigma\left(\abs{w_k - \tilde w_k}\cdot t + \abs{\zeta_k - \tilde\zeta_k}\right)
  \leq C_\sigma\left(\delta_2 + \delta_2\right)
  = 2C_\sigma \delta_2.
  \label{eq:h_perturb}
\end{equation}
Define also 
\begin{equation}
  \delta_3 := \max_{i,k}\abs{c_i^k - \tilde c_i^k}.
  \label{eq:delta_c}
\end{equation}
By the three-term telescoping decomposition,
\begin{equation}
  c_i^k f_i^k h_k - \tilde c_i^k\tilde f_i^k\tilde h_k
  = (c_i^k - \tilde c_i^k) f_i^k h_k
    + \tilde c_i^k(f_i^k - \tilde f_i^k) h_k
    + \tilde c_i^k\,\tilde f_i^k(h_k - \tilde h_k).
  \label{eq:telescope}
\end{equation}
Then, on $\{\|x\|_{\sup,S_n}\le u\}$ and for $t\in[0,1]$,
\[
|f_i^k(x)|\vee |\widetilde f_i^k(x)|\vee |h_k(t)| \vee |\widetilde h_k(t)| \le\sigma_{b(ru+1)}.
\]
Thus,
\begin{align*}
  \abs{c_i^k f_i^k h_k - \tilde c_i^k\tilde f_i^k\tilde h_k}
  &\leq \delta_3\sigma_{b(ru+1)}^2 + b\cdot C_\sigma(2 ru+1)\delta_1\cdot\sigma_{b(ru+1)} + b\cdot\sigma_{b(ru+1)} \cdot 2C_\sigma\delta_2.
\end{align*}
Summing over all $pq$ pairs $(i,k)$:
\begin{equation}
  \norm{G - \tilde G}_{u,\infty}
  \leq pq\left[\delta_3\sigma_{b(ru+1)}^2
    + C_\sigma b(2ru+1)\,\delta_1\,\sigma_{b(ru+1)}
    + 2 C_\sigma b\,\delta_2\,\sigma_{b(ru+1)}\right].
  \label{eq:lip_bound}
\end{equation}
To achieve $\norm{G-\tilde G}_{u,\infty}\leq\delta$, it suffices by
\eqref{eq:lip_bound} to choose $\delta_1$, $\delta_2$, $\delta_3$ such that
\begin{equation}
  pq\left[\delta_3\sigma_{b(ru+1)}^2
    + C_\sigma b(2ru+1)\,\delta_1\,\sigma_{b(ru+1)}
    + 2 C_\sigma b\,\delta_2\,\sigma_{b(ru+1)}\right]
  \leq \delta.
  \label{eq:eps_condition}
\end{equation}
Allocating the budget equally among the three terms by setting
\begin{equation*}
  \delta_3 = \frac{\delta}{3pq\cdot \sigma_{b(ru+1)}^2}, \qquad
  \delta_1 = \frac{\delta}{3pq\cdot C_\sigma b(2ru+1)\sigma_{b(ru+1)}}, \qquad
  \delta_2 = \frac{\delta}{3pq\cdot 2C_\sigma b\sigma_{b(ru+1)}}.
\end{equation*}
It is straightforward to verify that these choices satisfy \eqref{eq:eps_condition}.

The $c$-parameters $\{c_i^k : i=1,\ldots,q,\; k=1,\ldots,p\}$ consist of $pq$
values in $[-b,b]$.  A $\delta_3$-net of $[-b,b]$ in absolute-value metric requires at
most $\lceil 2b/\delta_3\rceil +1$ points.  The number of elements in a $\delta_3$-net
of $[-b,b]^{pq}$ is therefore:
\begin{equation*}
  N_{\delta_3} \le \left(\left\lceil\frac{2b}{\delta_3}\right\rceil +1 \right)^{pq}
  \leq \left(\frac{6b pq\sigma_{b(ru+1)}^2}{\delta} + 2\right)^{pq}
  \le \left(\frac{C_1 b pq\sigma_{b(ru+1)}^2}{\delta}\right)^{pq}
\end{equation*}
for a sufficiently large constant $C_1<\infty$.

For each neuron $(i,k)$, the weight vector $(\xi_{ij}^k)_{j=1}^J$ has at most
$r$ nonzero entries (by the sparsity constraint \eqref{eq:sparsity}) and one bias
$\theta_i^k$.  We cover these parameters in two steps:
\biz
\im [1.] Cover the sparsity pattern. Choose a candidate active set $S_{ik}\subseteq\{1,\ldots,J\}$ with $|S_{ik}|=r$. There are $\binom{J}{r}$ possible choices of such $S_{ik}$. This also covers sparsity patterns of size less than $r$, since some coefficients within the chosen active set may be zero.
\im[2.] Cover the active weights and bias.
Given a fixed active set $S_{ik}$, the free parameters are
$(\xi_{ij}^k)_{j\in S_{ik}}$ and $\theta_i^k$, giving
$r+1$ parameters in $[-b,b]$.  A $\delta_1$-net of $[-b,b]^{r+1}$ has
at most $(\lceil 2b/\delta_1\rceil +1)^{r+1}$ points.
\eiz
Combining over all $pq$ neurons:
\begin{align*}
  N_{\delta_1}
  &\le \left[\binom{J}{r}\left(\left\lceil\frac{2b}{\delta_1}\right\rceil +1 \right)^{r+1}\right]^{pq} \\
  &\leq \left[\binom{J}{r}
    \left(\frac{6C_\sigma b^2 pq(2ru+1)\sigma_{b(ru+1)}}{\delta} + 2\right)^{r+1}
  \right]^{pq} \\
  &\le \left[\binom{J}{r}
    \left(\frac{C_2 b^2 pq(2ru+1)\sigma_{b(ru+1)}}{\delta}\right)^{r+1}
  \right]^{pq}
\end{align*}
for a sufficiently large constant $C_2<\infty$.
The parameters $\{w_k,\zeta_k : k=1,\ldots,p\}$ consist of $2p$ values in
$[-b,b]$.  A $\delta_2$-net of $[-b,b]^{2p}$ has cardinality bounded by
\begin{equation*}
  N_{\delta_2} \le \left(\left\lceil\frac{2b}{\delta_2}\right\rceil +1\right)^{2p}
  \leq \left(\frac{12C_\sigma b^2 pq\sigma_{b(ru+1)}}{\delta} + 2\right)^{2p}
  \le \left(\frac{C_3 b^2 pq\sigma_{b(ru+1)}}{\delta}\right)^{2p}
\end{equation*}
for a sufficiently large constant $C_3<\infty$.
It follows that $\CN_{n,u,\delta}\le N_{\delta_1} N_{\delta_2} N_{\delta_3}$. This proves the lemma.
\end{proof}

\begin{lemma}[Tail bounds for the operator $G_0$] \label{l:G0_moment}
Under Assumptions \ref{a:process} and \ref{a:G_0}, for each $u\ge1$,
the following quantities,
\[\E\left[\ind{\|X\|_{\sup} > u}\cdot \|G_0(X)\|_{\sup}^2\right]\qquad 
{\rm and}\qquad \E\left[\|G_{0,n,u}(X)-G_0(X)\|_{\sup}^2\right],\]
are bounded by $\Psi^2(u)$, where
\begin{align} \label{e:Psi}
\begin{split}
\Psi^2(u)
:=
C_0^2
\Bigg[
&u^{2\nu}
\exp\left\{
-\frac{(u-\E\|X\|_{\sup})_+^2}{2\sigma_X^2}
\right\} \\
&\quad
+
\int_{u^{2\nu}}^\infty
\exp\left\{
-\frac{(t^{1/(2\nu)}-\E\|X\|_{\sup})_+^2}{2\sigma_X^2}
\right\}dt
\Bigg].
\end{split}
\end{align}
with $(a)_+ :=\max\{a,0\}$ and $\sigma_X^2 = \sup_{t\in [0,1]} \var(X(t))$. 
In particular, $\Psi^2(u)\to0$ as $u\to\infty$. 
\end{lemma}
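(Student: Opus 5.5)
The statement to prove is Lemma~\ref{l:G0_moment}. The plan is to reduce both expectations to a single tail integral for $\|X\|_{\sup}$ and then control that tail with the Borell--TIS inequality, which applies by Remark~\ref{r:process}(c).

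\textbf{First quantity.} By the growth bound \eqref{e:growth_G0} and $u\ge1$, on the event $\{\|X\|_{\sup}>u\}$ we have
\[
\|G_0(X)\|_{\sup}^2\le C_0^2\|X\|_{\sup}^{2\nu}=:C_0^2Z,
\]
where $Z>u^{2\nu}$. The layer-cake formula then gives
\[
\E\left[Z\,\ind{Z>u^{2\nu}}\right]=u^{2\nu}\pr(Z>u^{2\nu})+\int_{u^{2\nu}}^\infty \pr(Z>t)\,dt .
\]
For $t\ge u^{2\nu}$ we have $\pr(Z>t)=\pr(\|X\|_{\sup}>t^{1/(2\nu)})$. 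The Borell--TIS inequality (Theorem 2.1.1 of \cite{adler2007random}) gives, for every $v$,
\[
\pr(\|X\|_{\sup}>v)\le \exp\{-(v-\E\|X\|_{\sup})_+^2/(2\sigma_X^2)\}.
\]
When $v\le \E\|X\|_{\sup}$ the right side equals $1$, so the bound holds trivially. Substituting $v=u$ in the first term and $v=t^{1/(2\nu)}$ in the integral reproduces exactly the two terms of \eqref{e:Psi}.

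\textbf{Second quantity.} By \eqref{e:G0u},
\[
G_{0,n,u}(X)-G_0(X)=-\ind{\|X\|_{\sup,S_n}>u}\,G_0(X).
\]
Since $\|X\|_{\sup,S_n}\le\|X\|_{\sup}$, the event $\{\|X\|_{\sup,S_n}>u\}$ is contained in $\{\|X\|_{\sup}>u\}$. Hence the second expectation is bounded by the first, and so by $\Psi^2(u)$.

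\textbf{Limit $\Psi^2(u)\to0$.} Once $u>2\E\|X\|_{\sup}$, the first term is bounded by $u^{2\nu}e^{-u^2/(8\sigma_X^2)}\to0$. For the second, the substitution $t=v^{2\nu}$ turns the integral into
\[
\int_u^\infty 2\nu v^{2\nu-1}\exp\{-(v-\E\|X\|_{\sup})_+^2/(2\sigma_X^2)\}\,dv .
\]
This is the tail of a convergent integral (polynomial times Gaussian), so it vanishes as $u\to\infty$.

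\textbf{Main obstacle.} The only delicate point is the applicability of Borell--TIS: it needs $X$ to be a.s.\ bounded and $\E\|X\|_{\sup}<\infty$. Both follow from the H\"older condition \eqref{e:rho_1} via continuity and entropy bounds (Remark~\ref{r:process}). We also need $\sigma_X^2<\infty$, which holds by continuity of the covariance on $[0,1]$. If $\sigma_X^2=0$ then $X\equiv0$, and the bound holds trivially since $G_0(0)=0$.
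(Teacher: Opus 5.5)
Your proposal is correct and follows essentially the same route as the paper. Both arguments reduce the second expectation to the first via $\|X\|_{\sup,S_n}\le\|X\|_{\sup}$, bound $\|G_0(X)\|_{\sup}^2$ by $C_0^2\|X\|_{\sup}^{2\nu}$ on $\{\|X\|_{\sup}>u\}$, apply the layer-cake identity and the Borell--TIS inequality, and obtain $\Psi^2(u)\to0$ through the substitution $t=v^{2\nu}$ (you also add explicit checks of the trivial cases $v\le\E\|X\|_{\sup}$ and $\sigma_X^2=0$, which the paper leaves implicit).
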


\begin{proof}
First, observe that
\begin{align*} 
&\|G_{0,n,u}(x)-G_0(x)\|_{\sup} 
= \left\|G_0(x) \ind{\|x\|_{\sup, S_n}\le u}-G_0(x)\right\|_{\sup} \\
&=
\ind{\|x\|_{\sup,S_n}>u}\|G_0(x)\|_{\sup}
\le \ind{\|x\|_{\sup} > u}\cdot \|G_0(x)\|_{\sup}.
\end{align*}
Therefore, it suffices to bound
\[\E\left[\ind{\|X\|_{\sup} > u}\cdot \|G_0(X)\|_{\sup}^2\right].\]
By Assumption~\ref{a:G_0}, for $u\ge1$,
\begin{align} \label{e:truncation}
\begin{split}
& \E\left[\ind{\|X\|_{\sup}>u}\cdot\|G_0(X)\|_{\sup}^2\right] \\
&\le 
C_0^2\,\E\left[\ind{\|X\|_{\sup}>u}\|X\|_{\sup}^{2\nu} \right] \\
&= C_0^2\left[ u^{2\nu}\pr(\|X\|_{\sup}>u) 
+ \int_{u^{2\nu}}^\infty \pr\left(\|X\|_{\sup}^{2\nu}>t\right)dt\right].
\end{split}
\end{align}
By the Borell-TIS inequality 
\citep[cf.][]{adler2007random}, 
\begin{align} \label{e:Borel}
\pr\left(\|X\|_{\sup} > u\right) 
\le \exp\left(-\frac{(u -\E\|X\|_{\sup})_+^2}{2\sigma_X^2}\right).
\end{align}
Combining \eqref{e:Borel} with \eqref{e:truncation}, we obtain
\begin{align*}
& \E\left[\ind{\|X\|_{\sup}>u}\cdot\|G_0(X)\|_{\sup}^2\right] \\
&\le
C_0^2
\Bigg[
u^{2\nu}
\exp\left\{
-\frac{(u-\E\|X\|_{\sup})_+^2}{2\sigma_X^2}
\right\} \\
&\hspace{0.5cm}
+
\int_{u^{2\nu}}^\infty
\exp\left\{
-\frac{(t^{1/(2\nu)}-\E\|X\|_{\sup})_+^2}{2\sigma_X^2}
\right\}dt
\Bigg]
=:\Psi^2(u).
\end{align*}
Finally, the first term in $\Psi^2(u)$ converges to zero as $u\to\infty$,
because the Gaussian tail dominates any polynomial factor. For the integral
term, after the change of variables $s=t^{1/(2\nu)}$, it is bounded by a
Gaussian tail integral multiplied by the polynomial factor $s^{2\nu-1}$,
which also converges to zero as $u\to\infty$. Hence $\Psi^2(u)\to0$.
\end{proof}

We next introduce the smoother
\begin{align} \label{e:smoother}
\wt X_n(s) = \E\!\left[X(s)\mid X(s_1),\dots,X(s_{J_n})\right],
\end{align}
which is the conditional expectation of $X(s)$ given the observations on the grid $S_n=\{s_1,\dots,s_{J_n}\}$.
It is the optimal predictor of $X(s)$ under mean squared prediction error. For $\epsilon>0$, define the truncated neighborhood class
\begin{align} \label{e:X_n_epsilon}
\mathfrak{X}_{n,\epsilon} := \left\{x\in C[0,1]:\|x-\wt x_n\|_{\sup}\le \epsilon,~~ \|x\|_{\sup}\le u\right\},
\end{align}
where $\wt x_n$ denotes the realization of the smoother in~\eqref{e:smoother}. Note that $u$ is left out in $\mathfrak{X}_{n,\epsilon}$ for simplicity of notation.


\begin{lemma}[Concentration bounds for the kriging residual and $G_0$ stability]
\label{l:GaussianKriging}
Under Assumptions~\ref{a:process}, \ref{a:G_0}, and~\ref{a:sampling}, the following hold. 
\begin{itemize}
\item[(i)]
Let
$R_n(s):=X(s)-\wt X_n(s)$ denote the kriging residual process, and define\\
$\kappa_n:=\sup_{s\in[0,1]}\Bigl(\var(R_n(s))\Bigr)^{1/2}$.
Then, 
\[
\kappa_n\le \lambda \rho_n^\beta,
\qquad \mbox{where}\qquad 
\rho_n:=\max_{0\le j\le J_n}(s_{j+1}-s_j)~ {\rm with}~s_0:=0 ~{\rm and}~ s_{J_n+1}:=1.
\]
Moreover, there exists a constant $C_\beta>0$ such that, for all sufficiently large $n$,
\[
\E\left[\|R_n\|_{\sup}\right]
\le
C_\beta \kappa_n\sqrt{\log(\lambda/\kappa_n)},
\]
and, for every fixed $\epsilon>0$, there exists $n_\epsilon$ such that for all
$n\ge n_\epsilon$,
\[
\pr\left(\|R_n\|_{\sup}>\epsilon\right)
\le
\exp\left(-\frac{\epsilon^2}{8\kappa_n^2}\right)
\le
\exp\left(-\frac{\epsilon^2}{8\lambda^2\rho_n^{2\beta}}\right).
\]

\item[(ii)]
$\sup_{x\in\mathfrak{X}_{n,\epsilon}}
\|G_0(\wt x_n)-G_0(x)\|_{\sup}
\le C_0 (u+\epsilon)^{\nu} \epsilon^\alpha$. In particular, if $u\ge1$ and $0<\epsilon\le1$, then $\sup_{x\in\mathfrak{X}_{n,\epsilon}}
\|G_0(\wt x_n)-G_0(x)\|_{\sup}\le C\,u^{\nu}\epsilon^\alpha$ for some constant $C>0$ independent of $u$, $\epsilon$, and $n$.
\end{itemize}
\end{lemma}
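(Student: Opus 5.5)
Part (i) splits into a variance bound, an expected-supremum bound, and a tail bound; part (ii) follows from the Hölder condition \eqref{e:smooth_G0} on a suitable ball.

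\emph{Variance bound.} Fix $s\in[0,1]$ and let $s_j$ be a grid point closest to $s$, so $|s-s_j|\le\rho_n$. When $s$ lies in one of the boundary cells $[0,s_1)$ or $(s_{J_n},1]$, the nearest grid point is still within $\rho_n$ of $s$. Since $\wt X_n(s)$ is the $L^2$-projection of $X(s)$ onto $\mathrm{span}\{X(s_1),\dots,X(s_{J_n})\}$, it does at least as well as the predictor $X(s_j)$. Hence
\[
\var(R_n(s))\le \E[(X(s)-X(s_j))^2]=d_X(s,s_j)^2\le \lambda^2\rho_n^{2\beta}.
\]
Taking the supremum over $s$ gives $\kappa_n\le\lambda\rho_n^\beta$.

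\emph{Expected supremum.} $R_n$ is a centered Gaussian process, because it is a linear functional of $X$. Its canonical metric satisfies $d_{R_n}(s,t)\le d_X(s,t)+d_{\wt X_n}(s,t)$. Since conditional expectation is an $L^2$ contraction, $d_{\wt X_n}(s,t)\le d_X(s,t)$, so $d_{R_n}(s,t)\le 2\lambda|s-t|^\beta$. Also, $d_{R_n}\le 2\kappa_n$ uniformly, so the diameter of $[0,1]$ under $d_{R_n}$ is at most $2\kappa_n$. The covering number of $[0,1]$ in $d_{R_n}$ at scale $\eta$ is then $\lesssim (2\lambda/\eta)^{1/\beta}$. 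Dudley's entropy integral (Theorem 1.3.3 of \cite{adler2007random}) gives
\[
\E\|R_n\|_{\sup}\lesssim \int_0^{2\kappa_n}\sqrt{\beta^{-1}\log(2\lambda/\eta)}\,d\eta\lesssim \kappa_n\sqrt{\log(\lambda/\kappa_n)},
\]
for $\kappa_n$ small, that is, for $n$ large. The constant $C_\beta$ depends only on $\beta$.

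\emph{Tail bound.} Borell--TIS gives
\[
\pr(\|R_n\|_{\sup}>\epsilon)\le \exp\{-(\epsilon-\E\|R_n\|_{\sup})^2/(2\kappa_n^2)\}.
\]
Because $\rho_n\to0$, we have $\kappa_n\sqrt{\log(\lambda/\kappa_n)}\to0$. So for $n\ge n_\epsilon$ we get $\E\|R_n\|_{\sup}\le\epsilon/2$, which yields the bound $\exp(-\epsilon^2/(8\kappa_n^2))$. The second inequality in (i) follows from $\kappa_n\le\lambda\rho_n^\beta$. If $\kappa_n=0$ the probability is zero and there is nothing to prove.

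\emph{Part (ii).} Take $x\in\mathfrak X_{n,\epsilon}$. Then $\|x\|_{\sup}\le u$ and $\|\wt x_n\|_{\sup}\le\|x\|_{\sup}+\epsilon\le u+\epsilon$. Apply \eqref{e:smooth_G0} with radius $u+\epsilon\ge1$ to get
\[
\|G_0(\wt x_n)-G_0(x)\|_{\sup}\le C_0(u+\epsilon)^\nu\epsilon^\alpha.
\]
When $u\ge1$ and $\epsilon\le1$, we have $(u+\epsilon)^\nu\le(2u)^\nu$, so $C=2^\nu C_0$ works.

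\emph{Main obstacle.} The delicate step is the Dudley bound. The diameter must be controlled by $\kappa_n$, not by $\lambda$, while the Hölder increment bound is used for the entropy. Sample continuity of $\wt X_n$ also needs a brief justification: it is a finite linear combination of the continuous kernels $C_X(\cdot,s_j)$, which are Hölder continuous by \eqref{e:rho_1} and Cauchy--Schwarz.
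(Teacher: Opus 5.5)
Your proposal is correct and follows essentially the same route as the paper. Both arguments compare with the nearest grid point to get $\kappa_n\le\lambda\rho_n^\beta$. Both then apply Dudley's entropy integral, using covering numbers from the H\"older increment bound and a diameter of order $\kappa_n$, and use Borell--TIS once $\E\|R_n\|_{\sup}\le\epsilon/2$; part (ii) comes from the H\"older condition on the ball of radius $u+\epsilon$. The only difference is cosmetic: you obtain $d_{R_n}\le 2d_X$ from the triangle inequality and the $L^2$-contraction of conditional expectation, whereas the paper gets $d_{R_n}\le d_X$ directly from the law of total variance, which only changes constants.
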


\begin{proof}
We first prove part~(i). 
Note that $R_n(s)$ is a centered Gaussian process. 
Fix $s\in[0,1]$. Since $\wt X_n(s)$ is the optimal $L^2$ predictor of $X(s)$
based on $\{X(s_1),\dots,X(s_{J_n})\}$, its mean squared prediction error is no larger than that of any linear predictor based on the grid values. 
Let $s_{j(s)}$ be the grid point closest to $s$, that is,
\[
|s-s_{j(s)}|
=
\min_{1\le j\le J_n}|s-s_j|.
\]
Then,
\[
\var\bigl(X(s)-\wt X_n(s)\bigr)
\le
\E\bigl(X(s)-X(s_{j(s)})\bigr)^2  
=
d_X^2(s,s_{j(s)}) 
\le
\lambda^2 |s-s_{j(s)}|^{2\beta}
\le
\lambda^2\rho_n^{2\beta}.
\]
Taking the supremum over $s\in[0,1]$ gives
$\kappa_n\le \lambda\rho_n^\beta$.
Next we bound $\E\|R_n\|_{\sup}$ using Dudley's entropy integral. Let
\[
d_{R_n}(s,t):=\sqrt{\var(R_n(s)-R_n(t))}, \qquad s,t\in[0,1].
\]
Since
$R_n(s)-R_n(t)=\{X(s)-X(t)\} -\E\big[X(s)-X(t)\mid X(s^*),\,s^*\in S_n\big]$,
we have, for any $s,t\in[0,1]$,
\begin{align*}
d_{R_n}^2(s,t) & = \var\{R_n(s)-R_n(t)\} 
=
\E\Bigl[\var\bigl(X(s)-X(t)\mid X(s^*),\,s^*\in S_n\bigr)\Bigr] \\
&\quad\quad \le
\var\bigl(X(s)-X(t)\bigr)
=
d_X^2(s,t)
\le
\lambda^2|s-t|^{2\beta},
\end{align*}
where the last inequality follows from Assumption~\ref{a:process}(i). 
Let $N(\eta,[0,1],d_{R_n})$ be the covering number of the interval $[0,1]$
with respect to the metric $d_{R_n}$, using balls of radius $\eta$.
Since $d_{R_n}(s,t)\le \lambda |s-t|^\beta$, 
any Euclidean interval of radius $(\eta/\lambda)^{1/\beta}$ is contained in a
$d_{R_n}$-ball of radius $\eta$. 
Thus, the number of $d_{R_n}$-ball of radius $\eta$ needed to cover $[0,1]$ is no more than $(2(\eta/\lambda)^{1/\beta})^{-1}+1$, i.e., 
$$
N(\eta, [0,1],d_{R_n})\le 1+
\frac{1}{2}\left(\frac{\lambda}{\eta}\right)^{1/\beta},
$$
where the additive $1$ accounts for possible endpoint effect. 
For $0<\eta\le\lambda$, we have
$\left(\lambda/\eta\right)^{1/\beta}\ge 1$. Therefore,
\[
N(\eta, [0,1],d_{R_n})\le 1+\frac12\left(\frac{\lambda}{\eta}\right)^{1/\beta}
\le
\left(1+\frac12\right)
\left(\frac{\lambda}{\eta}\right)^{1/\beta}
\le
2\left(\frac{\lambda}{\eta}\right)^{1/\beta}.
\]
By Dudley's entropy bound~\citep[Theorem 1.3.3]{adler2007random}, there exists a universal constant $C>0$ such that
\begin{align} \label{e:entropy}
\E\Big[\sup_{s\in[0,1]}R_n(s)\Big]
\le C\int_0^{d_n/2}
\sqrt{\log N(\eta)}\,d\eta 
\le C\int_0^{\kappa_n}
\sqrt{
\log 2+\frac{1}{\beta}\log\left(\frac{\lambda}{\eta}\right)}\,d\eta,
\end{align}   
where $d_n:= \sup_{s,t\in[0,1]} d_{R_n}(s,t) \le 2\kappa_n$.
Using the change of variable $\eta=\kappa_n t$, we have
\[\int_0^{\kappa_n}
\sqrt{\log 2+\frac{1}{\beta}\log\left(\frac{\lambda}{\eta}\right)
}\,d\eta   
=\kappa_n \int_0^1
\sqrt{
\log 2+\frac{1}{\beta}\log\left(\frac{\lambda}{\kappa_n}\right)
+\frac{1}{\beta}\log(t^{-1})
}\,dt .
\]
For all sufficiently large $n$, $\log(\lambda/\kappa_n)\ge 1$. Note that 
$\int_0^1\sqrt{\log(t^{-1})}\,dt
=\Gamma(3/2)=\frac{\sqrt{\pi}}{2}$. Hence,
using $\sqrt{x+y}\le \sqrt{x}+\sqrt{y}$,
\[
\int_0^{\kappa_n}
\sqrt{
\log 2+\frac{1}{\beta}\log\left(\frac{\lambda}{\eta}\right)
}\,d\eta
\le
C_\beta\, \kappa_n
\sqrt{\log\left(\frac{\lambda}{\kappa_n}\right)},
\]
where the constant $C_\beta$ can be chosen to be of order $\beta^{-1/2}$. For example, one may take
\[
C_\beta
=
C\left[
\sqrt{\log 2+\frac{1}{\beta}}
+
\frac{\sqrt{\pi}}{2\sqrt{\beta}}
\right],
\]
after enlarging $C$ if necessary. 
Therefore,
\[
\E\Big[\sup_{s\in[0,1]} R_n(s)\Big]
\le
C_\beta\,\kappa_n\sqrt{\log(\lambda/\kappa_n)}.
\]
Since $R_n$ is centered Gaussian, $R_n$ and $-R_n$ have the same distribution. Then, 
\[
\E\big[\|R_n\|_{\sup}\big]   
\le 2\E\Big[\sup_{s\in[0,1]}R_n(s)\Big]\le  2C_\beta\,\kappa_n\sqrt{\log(\lambda/\kappa_n)} \to 0~~{\rm as}~ n\to\infty.
\]
Now fix $\epsilon>0$. Then, for all sufficiently large $n$,
\[
\E\big[\|R_n\|_{\sup}\big]\le \frac{\epsilon}{2}.
\]
By the Borell-TIS inequality \citep[cf.][]{adler2007random},
\begin{align} \label{e:Borell-TIS}
\begin{split}
&\pr\left(\|R_n\|_{\sup}>\epsilon\right)
 = \pr\left(\|R_n\|_{\sup} -\E\big[\|R_n\|_{\sup} \big]>\epsilon -\E\big[\|R_n\|_{\sup}\big]\right) \\
&\le \exp\Big(-(\epsilon -\E\|R_n\|_{\sup})^2/(2\kappa_n^2)\Big)
\le  \exp\Big(-\epsilon^2/(8\kappa_n^2)\Big)
\le\exp\left(-\frac{\epsilon^2}{8\lambda^2\rho_n^{2\beta}}\right)
\end{split}
\end{align}
for sufficiently large $n$. 

We next prove part~(ii). For any $x\in\mathfrak{X}_{n,\epsilon}$, by definition we have
$\|x\|_{\sup}\le u$ and $\|\wt x_n-x\|_{\sup}\le\epsilon$. Hence,
\[
\|\wt x_n\|_{\sup}
\le \|x\|_{\sup}+\|\wt x_n-x\|_{\sup}
\le u+\epsilon.
\]
Thus, both $x$ and $\wt x_n$ belong to the sup-norm ball of radius
$u+\epsilon$. By Assumption~\ref{a:G_0}(i),
\[
\|G_0(\wt x_n)-G_0(x)\|_{\sup}
\le C_0(u+\epsilon)^{\nu}\|\wt x_n-x\|_{\sup}^{\alpha}
\le C_0(u+\epsilon)^{\nu}\epsilon^\alpha.
\]
If $u\ge1$ and $0<\epsilon\le1$, then $u+\epsilon\le 2u$, and hence $C_0(u+\epsilon)^{\nu}\epsilon^\alpha
\le C_0 2^{\nu}u^{\nu}\epsilon^\alpha$. Thus the simplified bound holds with $C=C_0 2^{\nu}$.
\end{proof}

We next introduce an auxiliary smoothed and truncated version of the true
operator:
$$
\wt G_{0,n,u}(x) := \ind{\|x\|_{\sup,S_n} \le u}\cdot G_0(\wt x_n),
$$
where $\wt x_n$ is the smoother defined in~\eqref{e:smoother}. This operator
uses the conditional-mean reconstruction $\wt x_n$ in place of $x$, and is
restricted to the truncated region $\{\|x\|_{\sup,S_n}\le u\}$. It will be used
as an intermediate target for constructing a neural-network approximation. Fix $m$ and consider $\wt G_{0,m,u}$. On the set
$\{\|x\|_{\sup,S_m}\le u\}$, we have
$\wt G_{0,m,u}(x)=G_0(\wt x_m)$.
The map $x\mapsto G_0(\wt x_m)$ depends on $x$ only through the finite vector
\[
(x(s):s\in S_m)\in V_{m,u}:=[-u,u]^{|S_m|}.
\]
Since $\wt x_m$ is determined by these grid values and $G_0$ is continuous, this
induces a continuous map from the compact set $V_{m,u}$ into $C[0,1]$.
Therefore, by Theorem~5 of \cite{chen1995universal} (cf.~Section \ref{s:SNO}), for any $\epsilon>0$,
there exist positive integers $p,q$ and constants
$c_i^k,\xi_{ij}^k,\theta_i^k,\zeta_k,w_k\in\mathbb{R}$ such that
\begin{align}\label{e:phin0}
\begin{split}
\sup_{t\in [0,1]} \left|G_0(\widetilde x_m)(t) - G(x)(t)\right|
&< C_0\,(u+\epsilon)^{\nu} \epsilon^{\alpha}\quad
\text{for all $x$ with } (x(s_1),\ldots,x(s_{|S_m|}))\in V_{m,u},\\[2pt]
\text{where}\qquad
G(x)(t)
&= \sum_{k=1}^{p}\sum_{i=1}^{q}
    c_i^k\,\sigma\!\left(\sum_{j=1}^{|S_m|}\xi_{ij}^k\,x(s_j)+\theta_i^k\right)
    \sigma\!\left(w_k t+\zeta_k\right).
\end{split}
\end{align}
With this particular $G$, define $G_{m,u,\epsilon}(x):=\ind{\|x\|_{\sup,S_m}\le u}\,G(x)$. Then, by~\eqref{e:phin0},
\begin{align} \label{e:Gn0_approx}
\left\|\wt G_{0,m,u}- G_{m,u,\epsilon}\right\|_{u,\infty}< C_0 \,(u+\epsilon)^{\nu} \epsilon^{\alpha}.
\end{align}
Note that $G_{m,u,\epsilon}$ need not belong to $\CG_{m,u}$. The
universal approximation theorem guarantees the existence of a finite network
approximating $\wt G_{0,m,u}$, but it does not control whether the required
widths $p,q$ and the number of active input coordinates are within the
architectural bounds $p_m,q_m,r_m$ defining $\CG_{m,u}$. By Assumption \ref{a:sampling}, the classes $\CG_{n,u}$ become increasingly rich as $n$ grows.
Thus, there exists a large enough $n_{m,u,\epsilon}$ such that $G_{m,u,\epsilon}\in\CG_{n,u}$ for all $n\ge n_{m,u,\epsilon}$. 
Define
\begin{align} \label{e:m_n_1}
m_n=m_{n,u,\epsilon} := \max\left\{m: n_{m,u,\epsilon}\le n\right\}. 
\end{align}
Then $G_{m_n,u,\epsilon}\in\CG_{n,u}$ for every $n$, and
$m_{n,u,\epsilon}\to\infty$ as $n\to\infty$ for every fixed $u$ and
$\epsilon$.

\begin{lemma}\label{l:G*G0}
Under Assumption~\ref{a:G_0}(i), for every fixed $u\ge1$ and
$0<\epsilon\le1$, the following bound holds:
\begin{align} \label{e:G*G0}
&\sup_{x\in \mathfrak{X}_{m_n,\epsilon}\cap\mathfrak{X}_{n,\epsilon}}
\|G_{m_n,u,\epsilon}(x)-G_{0,n,u}(x)\|_{\sup} 
 \le   2C_0\,(u+\epsilon)^{\nu} \epsilon^{\alpha}.
\end{align}
\end{lemma}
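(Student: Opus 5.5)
The plan is to insert the intermediate operator $\wt G_{0,m_n,u}$ and the value $G_0(\wt x_{m_n})$ between $G_{m_n,u,\epsilon}(x)$ and $G_{0,n,u}(x)$, and apply the triangle inequality. Fix $x\in\mathfrak{X}_{m_n,\epsilon}\cap\mathfrak{X}_{n,\epsilon}$. By \eqref{e:X_n_epsilon}, $\|x\|_{\sup}\le u$. Consequently $\|x\|_{\sup,S_{m_n}}\le u$ and $\|x\|_{\sup,S_n}\le u$, so both truncation indicators equal one. On this set we therefore have
\[
G_{m_n,u,\epsilon}(x)=G(x),\qquad \wt G_{0,m_n,u}(x)=G_0(\wt x_{m_n}),\qquad G_{0,n,u}(x)=G_0(x).
\]
Here $G$ is the network in \eqref{e:phin0} built with $m=m_n$. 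The decomposition is then
\[
\|G_{m_n,u,\epsilon}(x)-G_{0,n,u}(x)\|_{\sup}\le \|G_{m_n,u,\epsilon}(x)-G_0(\wt x_{m_n})\|_{\sup}+\|G_0(\wt x_{m_n})-G_0(x)\|_{\sup}.
\]

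\emph{First term.} Because $\|x\|_{\sup}\le u$, the point $x$ lies in the domain over which $\|\cdot\|_{u,\infty}$ takes its supremum. So \eqref{e:Gn0_approx} with $m=m_n$ bounds this term by
\[
\|\wt G_{0,m_n,u}-G_{m_n,u,\epsilon}\|_{u,\infty}<C_0(u+\epsilon)^\nu\epsilon^\alpha .
\]
One could also use \eqref{e:phin0} directly, since $(x(s))_{s\in S_{m_n}}\in V_{m_n,u}$.

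\emph{Second term.} Since $x\in\mathfrak{X}_{m_n,\epsilon}$, part (ii) of Lemma~\ref{l:GaussianKriging}, applied with $n$ replaced by $m_n$, bounds this term by $C_0(u+\epsilon)^\nu\epsilon^\alpha$. That part rests on $\|\wt x_{m_n}\|_{\sup}\le u+\epsilon$ together with \eqref{e:smooth_G0}.

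Adding the two bounds and taking the supremum over $x$ gives \eqref{e:G*G0}. Membership in $\mathfrak{X}_{n,\epsilon}$ is not actually needed beyond supplying $\|x\|_{\sup}\le u$. It fixes the correct grid indicator in $G_{0,n,u}$, which holds automatically here.

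\emph{Main obstacle.} The key point is the bookkeeping of the indicators on the two different grids, $S_{m_n}$ versus $S_n$. Once $\|x\|_{\sup}\le u$ is used to remove both indicators, the rest is a direct citation of \eqref{e:Gn0_approx} and Lemma~\ref{l:GaussianKriging}(ii). One also needs $x$ to lie in the set where the universal-approximation bound was asserted; the condition $(x(s_j))\in V_{m_n,u}$ gives this.
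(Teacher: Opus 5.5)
Your proposal is correct and is essentially the paper's proof: both use $\|x\|_{\sup}\le u$ to remove the two grid indicators, insert $G_0(\wt x_{m_n})$ via the triangle inequality, and then bound the first term by \eqref{e:Gn0_approx} and the second by Lemma~\ref{l:GaussianKriging}(ii) with $n$ replaced by $m_n$. Your side remark is also accurate: membership in $\mathfrak{X}_{n,\epsilon}$ is redundant, because $\mathfrak{X}_{m_n,\epsilon}$ already supplies $\|x\|_{\sup}\le u$.
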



\begin{proof}
Fix $x\in\mathfrak{X}_{m_n,\epsilon}\cap\mathfrak{X}_{n,\epsilon}$.
By definition of $\mathfrak{X}_{m_n,\epsilon}$, we have
$\|x\|_{\sup}\le u$ and $\|x-\wt x_{m_n}\|_{\sup}\le\epsilon$.
In particular,
$\|x\|_{\sup,S_{m_n}}\le u$,
$\|x\|_{\sup,S_n}\le u$.
Hence, 
\[
G_{0,n,u}(x)=G_0(x),
\qquad
G_{m_n,u,\epsilon}(x)=G(x),
\qquad
\wt G_{0,m_n,u}(x)=G_0(\wt x_{m_n}).
\]
By the triangle inequality,
\begin{align*}
&\|G_{m_n,u,\epsilon}(x)-G_{0,n,u}(x)\|_{\sup}
 =
\|G_{m_n,u,\epsilon}(x)-G_0(x)\|_{\sup} \\
&\le
\|G_{m_n,u,\epsilon}(x)-G_0(\wt x_{m_n})\|_{\sup} 
+
\|G_0(\wt x_{m_n})-G_0(x)\|_{\sup}.
\end{align*}
The construction of $G_{m_n,u,\epsilon}$ and~\eqref{e:Gn0_approx} imply that
\[
\|G_{m_n,u,\epsilon}(x)-G_0(\wt x_{m_n})\|_{\sup}
\le
C_0 (u+\epsilon)^{\nu}\epsilon^\alpha .
\]
Moreover, by Lemma~\ref{l:GaussianKriging}(ii),
\[
\|G_0(\wt x_{m_n})-G_0(x)\|_{\sup}
\le
C_0 (u+\epsilon)^{\nu}\epsilon^\alpha .
\]
Combining the preceding two bounds gives
\[
\|G_{m_n,u,\epsilon}(x)-G_{0,n,u}(x)\|_{\sup}
\le
2C_0 (u+\epsilon)^{\nu}\epsilon^\alpha .
\]
Taking the supremum over
$x\in\mathfrak{X}_{m_n,\epsilon}\cap\mathfrak{X}_{n,\epsilon}$ completes the proof.
\end{proof}

To state the next lemma, first define
$$
r(\ell_1,\ell_2) = \sqrt{w_{\ell_1}w_{\ell_2}}\; 
C_\varepsilon(t_{\ell_1}, t_{\ell_2}), \qquad \ell_1, \ell_2\in [L],
$$
where $C_\varepsilon(s,t)$ is the covariance kernel of the error process $\{\varepsilon(t), t\in [0,1]\}$.

\begin{lemma} \label{l:eigen} Assume that $C_\varepsilon(s,t)$ is bounded on $[0,1]^2$, and define 
\[
B^2_\varepsilon:=\sup_{t\in[0,1]} C_\varepsilon(t,t)<\infty .
\]
Then, the largest eigenvalue of the $L\times L$ weighted covariance matrix $\{r(\ell_1,\ell_2)\}_{\ell_1,\ell_2\in [L]}$ is bounded by $B_\varepsilon^2$ for all $n$.
\end{lemma}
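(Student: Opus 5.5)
The plan is to control the Rayleigh quotient of the weighted matrix $M:=\{r(\ell_1,\ell_2)\}_{\ell_1,\ell_2\in[L]}$. Since $C_\varepsilon$ is a covariance kernel, $M$ is symmetric positive semidefinite. Its largest eigenvalue is therefore $\sup_{\|a\|_2=1} a^\top M a$, and it suffices to bound this quadratic form by $B_\varepsilon^2$ for every unit vector $a\in\bbR^L$.

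For fixed $a$, I would write the form as a variance:
\[
a^\top M a=\sum_{\ell_1,\ell_2} a_{\ell_1}a_{\ell_2}\sqrt{w_{\ell_1}w_{\ell_2}}\,\E[\varepsilon(t_{\ell_1})\varepsilon(t_{\ell_2})]
=\E\Big[\Big(\sum_{\ell} a_\ell\sqrt{w_\ell}\,\varepsilon(t_\ell)\Big)^2\Big].
\]
Applying Cauchy--Schwarz inside the expectation gives
\[
\Big(\sum_\ell a_\ell\sqrt{w_\ell}\,\varepsilon(t_\ell)\Big)^2\le \Big(\sum_\ell a_\ell^2\Big)\Big(\sum_\ell w_\ell\,\varepsilon(t_\ell)^2\Big).
\]
Taking expectations then yields
\[
a^\top M a\le \sum_\ell w_\ell\, C_\varepsilon(t_\ell,t_\ell)\le B_\varepsilon^2\sum_\ell w_\ell .
\]

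The last ingredient is the identity $\sum_{\ell=1}^L w_\ell=1$, which follows from the cell-length weights in \eqref{e:w_1}. For $L\ge2$ the sum telescopes:
\[
\frac{t_1+t_2}{2}+\sum_{1<\ell<L}\frac{t_{\ell+1}-t_{\ell-1}}{2}+1-\frac{t_{L-1}+t_L}{2}=1 .
\]
The interior terms contribute $(t_L+t_{L-1}-t_2-t_1)/2$, and this cancels the boundary terms. I would also note that every $w_\ell\ge0$ because the grid is ordered, which is needed for the square roots $\sqrt{w_\ell}$ to make sense. Combining the two steps gives $\lambda_{\max}(M)\le B_\varepsilon^2$, uniformly in $n$, since $B_\varepsilon$ does not depend on the grid.

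No step is a real obstacle here. The only points needing care are confirming the nonnegativity and telescoping sum of the weights, and the trivial case $L=1$, where \eqref{e:w_1} is degenerate and the bound $M=w_1C_\varepsilon(t_1,t_1)\le B_\varepsilon^2$ holds provided $w_1\le1$.
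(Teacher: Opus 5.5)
Your proposal is correct and is essentially the paper's argument. The paper bounds $\lambda_{\max}$ by the trace of the positive semidefinite matrix, $\tr = \sum_\ell w_\ell C_\varepsilon(t_\ell,t_\ell)\le B_\varepsilon^2\sum_\ell w_\ell = B_\varepsilon^2$. Your Cauchy--Schwarz step inside the expectation is a direct proof of that same inequality $\lambda_{\max}\le\tr$, and your check that $w_\ell\ge 0$ and $\sum_\ell w_\ell=1$ supplies a detail the paper states without proof.
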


\begin{proof}
Let
\[
\mathbf{R}_{\varepsilon,n}
=
\{r(\ell_1,\ell_2)\}_{\ell_1,\ell_2\in[L_n]},
\qquad {\rm where}~~
r(\ell_1,\ell_2)
=
\sqrt{w_{\ell_1}w_{\ell_2}}\,
C_\varepsilon(t_{\ell_1},t_{\ell_2}).
\]
Then $\mathbf{R}_{\varepsilon,n}$ is the covariance matrix of the random vector
$(\sqrt{w_\ell}\,\varepsilon(t_\ell))_{\ell=1}^{L_n}$, and hence is positive
semidefinite. Therefore,
\[
\lambda_{\max}(\mathbf{R}_{\varepsilon,n})
\le
\tr(\mathbf{R}_{\varepsilon,n})
=
\sum_{\ell=1}^{L_n} r(\ell,\ell)
=
\sum_{\ell=1}^{L_n} w_\ell C_\varepsilon(t_\ell,t_\ell)
\le
B^2_\varepsilon \sum_{\ell=1}^{L_n} w_\ell
=
B^2_\varepsilon.
\]
The last inequality follows from the definition of $B^2_\varepsilon$, and the
last equality follows from $\sum_{\ell=1}^{L_n}w_\ell=1$. Thus the largest
eigenvalue of $\mathbf{R}_{\varepsilon,n}$ is uniformly bounded in $n$.
\end{proof}

\begin{lemma} \label{l:basic_ineq}
Let $Z_{i,\ell}=\varepsilon_i(t_\ell)+e_{i,\ell}$. Then, the following inequality holds:
$$
\|\wh G_{n,u}-G_{0,n,u}\|_n^2 \le S_{n,1} + S_{n,2},
$$
where 
\begin{align*}
S_{n,1} &:= \frac{1}{n} \sum_{i=1}^n \|G_{m_n,u,\epsilon}(X_i) -G_{0,n,u}(X_i)\|_{\sup}^2, \\
S_{n,2} &:= {2\over n}\sum_{i=1}^n \sum_{\ell=1}^{L_n} w_\ell (\wh G_{n,u}(X_i)(t_{\ell})
- G_{m_n,u,\epsilon}(X_i)(t_{\ell})) Z_{i,\ell}.
\end{align*}
\end{lemma}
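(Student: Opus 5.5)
This is the standard basic inequality for least squares, with $G_{m_n,u,\epsilon}$ used as the comparison element. The first step is to note that $G_{m_n,u,\epsilon}\in\CG_{n,u}$ for every $n$, by the definition of $m_n$ in \eqref{e:m_n_1}. Hence $G_{m_n,u,\epsilon}$ is admissible in the minimization \eqref{e:G_hat_G_tilde}, and the empirical criterion at $\wh G_{n,u}$ is no larger than at $G_{m_n,u,\epsilon}$:
\[
\frac1n\sum_{i=1}^n V_{i,n,u}\big\|Y_{i,\cdot}-\wh G_{n,u}(X_i)\big\|_L^2
\le
\frac1n\sum_{i=1}^n V_{i,n,u}\big\|Y_{i,\cdot}-G_{m_n,u,\epsilon}(X_i)\big\|_L^2 .
\]

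Next I decompose the observations as $Y_{i,\ell}=G_0(X_i)(t_\ell)+Z_{i,\ell}$. On $\{V_{i,n,u}=1\}$ we have $G_0(X_i)=G_{0,n,u}(X_i)$, so there $Y_{i,\ell}=G_{0,n,u}(X_i)(t_\ell)+Z_{i,\ell}$. On $\{V_{i,n,u}=0\}$ the three operators $\wh G_{n,u}$, $G_{m_n,u,\epsilon}$ and $G_{0,n,u}$ all vanish at $X_i$. This holds because the indicator $\ind{\|x\|_{\sup,S_n}\le u}$ is built into $\CG_{n,u}$ and into $G_{0,n,u}$, and $\{\|x\|_{\sup,S_n}\le u\}\subset\{\|x\|_{\sup,S_{m_n}}\le u\}$ by nestedness, which requires $m_n\le n$. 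Therefore the weights $V_{i,n,u}$ can be inserted or removed freely in every term involving differences of these operators.

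Expanding both squares around $G_{0,n,u}(X_i)+Z_{i,\cdot}$, the $\|Z\|_L^2$ terms cancel. This leaves
\[
\|\wh G_{n,u}-G_{0,n,u}\|_n^2\le \|G_{m_n,u,\epsilon}-G_{0,n,u}\|_n^2+\frac2n\sum_{i}\sum_\ell w_\ell\big(\wh G_{n,u}(X_i)(t_\ell)-G_{m_n,u,\epsilon}(X_i)(t_\ell)\big)V_{i,n,u}Z_{i,\ell}.
\]
Each term $\|G_{m_n,u,\epsilon}(X_i)-G_{0,n,u}(X_i)\|_L^2$ is bounded by $\|\cdot\|_{\sup}^2\sum_\ell w_\ell=\|\cdot\|_{\sup}^2$, since the weights in \eqref{e:w_1} sum to one. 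This gives $S_{n,1}$.

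The only delicate point is the factor $V_{i,n,u}$ in the cross term. It is harmless because the difference $\wh G_{n,u}(X_i)-G_{m_n,u,\epsilon}(X_i)$ vanishes whenever $V_{i,n,u}=0$, by the same support argument. The cross term therefore equals $S_{n,2}$, which completes the proof. The main obstacle is confirming this support claim, including the requirement $m_n\le n$; if it were in doubt, one could simply keep $V_{i,n,u}$ inside $S_{n,2}$.
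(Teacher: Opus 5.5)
Your proof is correct and follows the paper's argument step for step: the ERM inequality against $G_{m_n,u,\epsilon}\in\CG_{n,u}$, the decomposition $Y_{i,\ell}=G_{0,n,u}(X_i)(t_\ell)+Z_{i,\ell}$ on $\{V_{i,n,u}=1\}$, cancellation of the $Z_{i,\ell}^2$ terms, removal of $V_{i,n,u}$, and the bound $\|\cdot\|_L\le\|\cdot\|_{\sup}$ from $\sum_\ell w_\ell=1$. One correction to your support discussion: the inclusion $\{\|x\|_{\sup,S_n}\le u\}\subset\{\|x\|_{\sup,S_{m_n}}\le u\}$ points the wrong way for vanishing on $\{V_{i,n,u}=0\}$, since it only says the $S_{m_n}$ indicator equals $1$ when $V_{i,n,u}=1$, so $G_{m_n,u,\epsilon}(X_i)=0$ there follows solely from membership in $\CG_{n,u}$ (the built-in $S_n$ indicator), exactly as the paper uses it---under the literal $S_{m_n}$-indicator definition it can fail, and then your fallback of keeping $V_{i,n,u}$ inside $S_{n,2}$ is what is needed (this is harmless for the later bound on $\E S_{n,2}$).
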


\begin{proof}
Recall that
\[
\|\widehat G_{n,u}-G_{0,n,u}\|_n^2
= \frac1n\sum_{i=1}^n V_{i,n,u} \sum_{\ell=1}^L 
w_\ell\bigl(\widehat G_{n,u}(X_i)(t_\ell)-G_{0,n,u}(X_i)(t_\ell)\bigr)^2,
\]
and that $\widehat G_{n,u}$ minimizes the truncated empirical loss:
\[
\widehat G_{n,u}
= \argmin_{G\in\mathcal G_{n,u}}
\frac{1}{n}\sum_{i=1}^n V_{i,n,u}\sum_{\ell=1}^{L}
w_\ell \bigl(Y_{i,\ell}-G(X_i)(t_\ell)\bigr)^2 ,
\]
where $V_{i,n,u}=\ind{\|X_i\|_{\sup,S_n}\le u}$.
Hence, for any $G\in\mathcal G_{n,u}$,
\begin{equation}
\label{e:ERM-ineq}
\frac{1}{n}\sum_{i=1}^n V_{i,n,u}\sum_{\ell=1}^L
w_\ell \bigl(Y_{i,\ell}-\widehat G_{n,u}(X_i)(t_\ell)\bigr)^2
\le
\frac{1}{n}\sum_{i=1}^n V_{i,n,u}\sum_{\ell=1}^L
w_\ell \bigl(Y_{i,\ell}-G(X_i)(t_\ell)\bigr)^2 .
\end{equation}
Write
\[
A_{i,\ell}
= \widehat G_{n,u}(X_i)(t_\ell)-G_{0,n,u}(X_i)(t_\ell),\qquad
B_{i,\ell}
= G_{m_n,u,\epsilon}(X_i)(t_\ell)-G_{0,n,u}(X_i)(t_\ell).
\]
On the event $\{V_{i,n,u}=1\}$, we have 
$Y_{i,\ell}=G_{0,n,u}(X_i)(t_\ell)+Z_{i,\ell}$.
Therefore,
\[
Y_{i,\ell}-\widehat G_{n,u}(X_i)(t_\ell)
= Z_{i,\ell}-A_{i,\ell},\qquad
Y_{i,\ell}-
G_{m_n,u,\epsilon}(X_i)(t_\ell)
=Z_{i,\ell}-B_{i,\ell}.
\]
Substituting these into \eqref{e:ERM-ineq} and expanding the squares yields
\[
\frac1n\sum_i V_{i,n,u}\sum_\ell w_\ell
\bigl(Z^2_{i,\ell} -2 Z_{i,\ell} A_{i,\ell}+A_{i,\ell}^2\bigr)
\le
\frac1n\sum_i V_{i,n,u}\sum_\ell w_\ell
\bigl(Z^2_{i,\ell} -2 Z_{i,\ell} B_{i,\ell}+B_{i,\ell}^2\bigr).
\]
The terms $Z^2_{i,\ell}$ are identical on both sides and cancel, giving
\[
\frac1n\sum_i V_{i,n,u} \sum_\ell w_\ell A_{i,\ell}^2
\le
\frac1n\sum_i V_{i,n,u} \sum_\ell w_\ell B_{i,\ell}^2
+
\frac{2}{n}\sum_i V_{i,n,u}\sum_\ell
w_\ell (A_{i,\ell}-B_{i,\ell})Z_{i,\ell}.
\]
Since $V_{i,n,u}\cdot G_0(X_i)=G_{0,n,u}(X_i)$ and 
$V_{i,n,u}\cdot G(X_i)=G(X_i)$ for all $G\in\mathcal G_{n,u}$, we may drop
$V_{i,n,u}$ from both sides. Moreover,
\[
\sum_{\ell=1}^L w_\ell B_{i,\ell}^2
=
\|G_{m_n,u,\epsilon}(X_i)-G_{0,n,u}(X_i)\|_L^2
\le
\|G_{m_n,u,\epsilon}(X_i)-G_{0,n,u}(X_i)\|_{\sup}^2.
\]
Therefore,
\begin{align*}
\|\widehat G_{n,u}-G_{0,n,u}\|_n^2
& \le
\frac1n\sum_{i=1}^n\| 
G_{m_n,u,\epsilon}(X_i)-G_{0,n,u}(X_i)\|_{\sup}^2 \\
& \quad +
\frac{2}{n}\sum_{i=1}^n\sum_{\ell=1}^L
w_\ell (\widehat G_{n,u} 
- G_{m_n,u,\epsilon}
)(X_i)(t_\ell)Z_{i,\ell}.
\end{align*}
This is exactly $S_{n,1}+S_{n,2}$, and the proof is complete.
\end{proof}

\begin{lemma} \label{l:rate_ESn}
Under Assumptions~\ref{a:process}--\ref{a:network},
for each fixed $u\ge1$ and
$0<\epsilon\le1$, and for any $\delta>0$, the quantities $S_{n,1}$ and $S_{n,2}$ satisfy, for all sufficiently large $n$,
\begin{align}\label{e:ESn1}
\begin{split}
\E S_{n,1} 
&\le \Delta_{n,u,\epsilon},
\end{split} \\
\label{e:ESn2}
|\E S_{n,2}|
&\le
2\delta\,\sigma_{\varepsilon,e}
\;+\;
2 B_{\varepsilon,e}
\left( \widehat R_n(\widehat G_{n,u},G_{0,n,u})^{1/2} + \delta \right)
\sqrt{\frac{3\log \CN_{n,u,\delta}+1}{\,n\,}},
\end{align}
where $\sigma_{\varepsilon,e}$ and $B_{\varepsilon,e}$ are defined in \eqref{e:B_varepsilon_e},  
$\Delta_{n,u,\epsilon}$ is defined in~\eqref{e:Delta} and $m_n$ is defined in \eqref{e:m_n_1}. 
\end{lemma}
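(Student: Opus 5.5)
For \eqref{e:ESn1}, I would bound $\|G_{m_n,u,\epsilon}(X)-G_{0,n,u}(X)\|_{\sup}^2$ pointwise for a single copy $X$; since the $X_i$ are iid, $\E S_{n,1}$ equals this expectation. The plan is to partition the sample space into four events:
\begin{itemize}
\item[(a)] $E_1=\{X\in\mathfrak{X}_{m_n,\epsilon}\cap\mathfrak{X}_{n,\epsilon}\}$;
\item[(b)] $E_2=\{\|X\|_{\sup,S_{m_n}}\le u<\|X\|_{\sup}\}$;
\item[(c)] $E_3=\{\|R_{m_n}\|_{\sup}>\epsilon\}\cup\{\|R_n\|_{\sup}>\epsilon\}$;
\item[(d)] the remaining event, on which $\|X\|_{\sup,S_{m_n}}>u$.
\end{itemize}

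On $E_1$, Lemma~\ref{l:G*G0} gives the bound $4C_0^2(u+\epsilon)^{2\nu}\epsilon^{2\alpha}$ for the squared difference.

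On (d), $G_{m_n,u,\epsilon}(X)=0$ because of the indicator. Also $\|X\|_{\sup}>u$, so $\|X\|_{\sup,S_n}\ge$ … here care is needed. Since the grids are nested, $S_{m_n}\subset S_n$ once $m_n\le n$ (which I would verify from the definition \eqref{e:m_n_1}). Hence $\|X\|_{\sup,S_n}\ge\|X\|_{\sup,S_{m_n}}>u$, so $G_{0,n,u}(X)=0$ too, and the difference vanishes. The nestedness also gives $\|X\|_{\sup,S_n}\le\|X\|_{\sup}$, which is what places $E_1$ inside both truncation regions.

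On $E_2$, the difference is at most $\|G_{m_n,u,\epsilon}(X)\|_{\sup}+\|G_{0,n,u}(X)\|_{\sup}$. I would bound this by $\|G_0(X)\|_{\sup}$ plus a sieve term. If $\|X\|_{\sup,S_n}>u$, the $G_{0,n,u}$ part is zero and $G_{m_n,u,\epsilon}$ is bounded by $Bu^\nu$.

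The cleanest way to handle this is to arrange the constants so that $E_2$ contributes at most $4\E[\ind{E_2}\|G_0(X)\|_{\sup}^2]$ plus a term absorbed into the third term of $\Delta_{n,u,\epsilon}$. This may require splitting $E_2$ once more according to whether $E_3$ holds.

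On $E_3\cap\{\|X\|_{\sup}\le u\}$, I would use the crude bound $\|G_{m_n,u,\epsilon}\|_{u,\infty}+\|G_{0,n,u}\|_{u,\infty}\le (B+2C_0)u^\nu$. This needs $\|G_{m_n,u,\epsilon}\|_{u,\infty}\le Bu^\nu$ (membership in $\CG_{n,u}$) together with \eqref{e:G0u_sup}. The probability of $E_3$ is then bounded by Lemma~\ref{l:GaussianKriging}(i), using $\rho_n\le\rho_{m_n}$ by nestedness. This yields $2\exp(-\epsilon^2/(8\lambda^2\rho_{m_n}^{2\beta}))$, and the factor $4$ in $\Delta_{n,u,\epsilon}$ absorbs the $2$.

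For \eqref{e:ESn2}, I would take a $\delta$-net $\{G^{(1)},\dots,G^{(N)}\}$ of $\CG_{n,u}$ in $\|\cdot\|_{u,\infty}$, with $N=\CN_{n,u,\delta}$, and choose $j^*$ with $\|\wh G_{n,u}-G^{(j^*)}\|_{u,\infty}\le\delta$. I would then split $S_{n,2}$ into two parts:
\begin{itemize}
\item the approximation part $\frac2n\sum_i\sum_\ell w_\ell(\wh G_{n,u}-G^{(j^*)})(X_i)(t_\ell)Z_{i,\ell}$;
\item the part with $\wh G_{n,u}$ replaced by $G^{(j^*)}$.
\end{itemize}

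One technicality is that $\|\cdot\|_{u,\infty}$ controls only $\|x\|_{\sup}\le u$, while the truncation is on $\|x\|_{\sup,S_n}\le u$. To handle this, I would define the net and covering on the grid-truncated domain, since the proof of Lemma~\ref{l:covering} only used $|x_j|\le u$.

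For the approximation part, Cauchy--Schwarz gives an expectation of at most $2\delta\,\E[\frac1n\sum_i\sum_\ell w_\ell|Z_{i,\ell}|]\le2\delta\sigma_{\varepsilon,e}$.

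For the net part, note that $G_{m_n,u,\epsilon}$ is fixed and $Z$ is independent of the $X_i$. Conditional on the $X_i$, for each $j$ the quantity
\[
\xi_j=\frac{\sum_i\sum_\ell w_\ell(G^{(j)}-G_{m_n,u,\epsilon})(X_i)(t_\ell)Z_{i,\ell}}{\sqrt n\,\|G^{(j)}-G_{m_n,u,\epsilon}\|_n}
\]
is centered Gaussian. By Lemma~\ref{l:eigen} and Assumption~\ref{a:sampling}(iii), its variance is at most $B_{\varepsilon,e}^2$. The standard maximal inequality then gives $\E\max_j\xi_j^2\le B_{\varepsilon,e}^2(3\log N+1)$.

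Next I would write $\|G^{(j^*)}-G_{m_n,u,\epsilon}\|_n\le\|\wh G_{n,u}-G_{0,n,u}\|_n+\|G_{0,n,u}-G_{m_n,u,\epsilon}\|_n+\delta$. Applying Cauchy--Schwarz in expectation, the first term yields $\widehat R_n^{1/2}$. The middle term is the main obstacle, because the target bound contains only $\widehat R_n^{1/2}+\delta$. I would try centering the net part at $G_{0,n,u}$ instead of $G_{m_n,u,\epsilon}$: the $G_{m_n,u,\epsilon}-G_{0,n,u}$ piece is independent of $Z$ and has mean zero, so it drops out of $\E S_{n,2}$ exactly.
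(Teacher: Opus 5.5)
Your argument for \eqref{e:ESn2}, in its final form, is exactly the paper's. That form recenters at $G_{0,n,u}$, which is legitimate because $\E[G(X_i)(t_\ell)Z_{i,\ell}]=0$ for every deterministic bounded $G$. The remaining steps are also the paper's: a $\delta$-net, the $2\delta\sigma_{\varepsilon,e}$ term, conditionally Gaussian $\xi_j$ with variance at most $B_{\varepsilon,e}^2$ by Lemma~\ref{l:eigen}, $\|G_{j^*}-G_{0,n,u}\|_n\le\|\wh G_{n,u}-G_{0,n,u}\|_n+\delta$, Cauchy--Schwarz, and the Gaussian maximal inequality. The covering technicality you raise is a non-issue. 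Elements of $\CG_{n,u}$ depend on $x$ only through its grid values, and every vector in $[-u,u]^{J_n}$ is the grid trace of its piecewise-linear interpolant, whose sup norm is at most $u$.

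\textbf{The gap: the event $E_2$ in \eqref{e:ESn1}.} Your event $E_2=\{\|X\|_{\sup,S_{m_n}}\le u<\|X\|_{\sup}\}$ is left open, and the resolution you sketch does not go through. On $E_2$, $G_{m_n,u,\epsilon}(X)$ is not switched off by its indicator.
\begin{itemize}
\item[(i)] The crude bound $\|G_{m_n,u,\epsilon}(X)\|_{\sup}\le Bu^{\nu}$ costs $2B^2u^{2\nu}\pr(E_2)$. But $\pr(E_2)$ is a pure discretization probability, and nothing ties it to $\exp\{-\epsilon^2/(8\lambda^2\rho_{m_n}^{2\beta})\}$. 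Heuristically it is of the order of the oscillation of $X$ over a grid cell, i.e.\ polynomial in $\rho_{m_n}$. So it cannot be absorbed into the third term of $\Delta_{n,u,\epsilon}$.
\item[(ii)] Splitting further by $E_3$ only disposes of $E_2\cap E_3$.
\item[(iii)] On $E_2\cap E_3^c$, the best available tool is \eqref{e:phin0}, which applies because the $S_{m_n}$-grid values lie in $[-u,u]$. Combined with Assumption~\ref{a:G_0}(i) at radius $\|X\|_{\sup}+\epsilon$, it gives $\|G_{m_n,u,\epsilon}(X)-G_0(X)\|_{\sup}\le C_0\{(u+\epsilon)^{\nu}+(\|X\|_{\sup}+\epsilon)^{\nu}\}\epsilon^{\alpha}$. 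This produces a term of order $\epsilon^{2\alpha}\E[\ind{E_2}(\|X\|_{\sup}+\epsilon)^{2\nu}]$. That term is not among the three terms of $\Delta_{n,u,\epsilon}$, and it cannot in general be dominated by the first one, since $\|X\|_{\sup}$ is unbounded on $E_2$.
\end{itemize}

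\textbf{How the paper handles it, and the repair.} The paper does not really settle this case either. It splits only into $\{X\in\mathfrak{X}_{m_n,\epsilon}\cap\mathfrak{X}_{n,\epsilon}\}$ and its complement and bounds the complement crudely. It then asserts $\pr(X\notin\mathfrak{X}_{m_n,\epsilon}\cap\mathfrak{X}_{n,\epsilon})\le 2\exp\{-\epsilon^2/(8\lambda^2\rho_{m_n}^{2\beta})\}$ from Lemma~\ref{l:GaussianKriging}(i). However, $\mathfrak{X}_{n,\epsilon}$ also imposes $\|x\|_{\sup}\le u$, so that probability is at least $\pr(\|X\|_{\sup}>u)$. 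The part of $\{\|X\|_{\sup}>u\}$ not neutralised by your case (d) is exactly $E_2$.

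The repair, for both arguments, is to enlarge the bound to $\E S_{n,1}\le\Delta_{n,u,\epsilon}+2B^2u^{2\nu}\pr(E_2)$. Your four-event partition proves this directly. $E_2$ contributes at most $2B^2u^{2\nu}\pr(E_2)+2\E[\ind{E_2}\|G_0(X)\|_{\sup}^2]$, and $E_3\cap\{\|X\|_{\sup}\le u\}$ contributes at most $2(B+C_0)^2u^{2\nu}\exp\{\cdots\}$. The grids are nested with mesh tending to zero and $X$ has continuous paths, so $\ind{E_2}\to 0$ almost surely as $m_n\to\infty$. Hence the extra term vanishes and the limsup of the enlarged quantity is still at most $4C_0^2(u+\epsilon)^{2\nu}\epsilon^{2\alpha}$. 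That is all Propositions~\ref{p:rate_2}--\ref{p:hatRn} and the proof of Theorem~\ref{t:rate} use. What is missing from your proposal is to state and prove this modified bound, rather than trying to fit $E_2$ into $\Delta_{n,u,\epsilon}$ as defined.
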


\begin{proof}
{\sc The bound for $\E S_{n,1}$:} Since the $X_i$ are iid., we have 
\begin{align} \label{e:ESn1_1} 
\begin{split}
\E S_{n,1} & = \E \left[\|G_{m_n,u,\epsilon}(X)-G_{0,n,u}(X)\|_{\sup}^2\right] \\
&= \E \left[\left\|G_{m_n,u,\epsilon}(X)-G_{0,n,u}(X)\right\|_{\sup}^2 \ind{X\in \mathfrak{X}_{m_n,\epsilon}\cap\mathfrak{X}_{n,\epsilon}}\right] \\
&\quad + \E \left[\left\|G_{m_n,u,\epsilon}(X)-G_{0,n,u}(X)\right\|_{\sup}^2 \ind{X\not\in\mathfrak{X}_{m_n,\epsilon}\cap\mathfrak{X}_{n,\epsilon}}\right].
\end{split}
\end{align} 
By Lemma~\ref{l:G*G0}, on the event $X\in \mathfrak{X}_{m_n,\epsilon}\cap\mathfrak{X}_{n,\epsilon}$, we have
\[
\|G_{m_n,u,\epsilon}(X)-G_{0,n,u}(X)\|_{\sup}
\le
2C_0\,(u+\epsilon)^{\nu} \epsilon^\alpha.
\]
Hence, the first term on the right-hand side of~\eqref{e:ESn1_1} is bounded by
\begin{align} \label{e:ESn1_2}
\E \left[\left\|G_{m_n,u,\epsilon}(X)-G_{0,n,u}(X)\right\|_{\sup}^2
\ind{X\in \mathfrak{X}_{m_n,\epsilon}\cap\mathfrak{X}_{n,\epsilon}}\right] 
\le
4C_0^2 (u+\epsilon)^{2\nu}\epsilon^{2\alpha}.
\end{align}
We next bound the second term on the right-hand side of~\eqref{e:ESn1_1} as
\begin{align*}
& \E \left[\left\|G_{m_n,u,\epsilon}(X)-G_{0,n,u}(X)\right\|_{\sup}^2 \ind{X\not\in\mathfrak{X}_{m_n,\epsilon}\cap\mathfrak{X}_{n,\epsilon}}\right] \\
& \le 2 \E \left[\left(\left\|G_{m_n,u,\epsilon}(X)\|_{\sup}^2 + \|G_{0,n,u}(X)\right\|_{\sup}^2\right)\ind{X\not\in\mathfrak{X}_{m_n,\epsilon}\cap\mathfrak{X}_{n,\epsilon}}\right] \\
& \le 2 \E \left[\left(\left\|G_{m_n,u,\epsilon}(X)\|_{\sup}^2
+ 2\|G_{0,u}(X)\right\|_{\sup}^2 + 2\left\|G_{0,n,u}(X)-G_{0,u}(X)\right\|_{\sup}^2 
\right)\ind{X\not\in\mathfrak{X}_{m_n,\epsilon}\cap\mathfrak{X}_{n,\epsilon}}\right] \\
& \le 2\left(\|G_{m_n,u,\epsilon}\|_{u,\infty}^2 + 2 \|G_{0,u}\|_{u,\infty}^2\right) \pr(X\not\in\mathfrak{X}_{m_n,\epsilon}\cap\mathfrak{X}_{n,\epsilon}) + 4 \E \left[\left\|G_{0,n,u}(X)-G_{0,u}(X)\right\|_{\sup}^2\right].
\end{align*}
Since $G_{m_n,u,\epsilon}\in\CG_{n,u}$, we have that $\|G_{m_n,u,\epsilon}\|_{u,\infty} \le B u^{\nu}$. Also, $\|G_{0,u}\|_{u,\infty}\le C_0\, u^{\nu}$ by~\eqref{e:G0u_sup}.
Because $\rho_n\le\rho_{m_n}$, by Lemma~\ref{l:GaussianKriging}(i), 
$$
\pr(X\not\in\mathfrak{X}_{m_n,\epsilon}\cap\mathfrak{X}_{n,\epsilon}) \le \pr(X\not\in\mathfrak{X}_{m_n,\epsilon}) + \pr(X\not\in\mathfrak{X}_{n,\epsilon}) \le 2\exp\left(-\frac{\epsilon^2}{8 \lambda^2\rho_{m_n}^{2\beta}}\right).
$$
Moreover, 
$\E \left[\left\|G_{0,n,u}(X)-G_{0,u}(X)\right\|_{\sup}^2\right] =\E\left[\ind{\|X\|_{\sup,S_{n}} \le u < \|X\|_{\sup}} \|G_0(X)\|_{\sup}^2\right]$.
Combining these bounds gives
\begin{align} \label{e:ESn1_3}
\begin{split}
& \E \left[\left\|G_{m_n,u,\epsilon}(X)-G_{0,n,u}(X)\right\|_{\sup}^2 \ind{X\not\in\mathfrak{X}_{m_n,\epsilon}\cap\mathfrak{X}_{n,\epsilon}}\right] \\
& \le 4 \left(B^2 u^{2 \nu}+2C_0^2u^{2\nu}\right)
 \exp\left(-\frac{\epsilon^2}{8 \lambda^2\rho_{m_n}^{2\beta}}\right)
+ 4\E\left[\ind{\|X\|_{\sup,S_{n}} \le u < \|X\|_{\sup}} \|G_0(X)\|_{\sup}^2\right].
\end{split}
\end{align}
Combining \eqref{e:ESn1_1}--\eqref{e:ESn1_3}, we obtain the asserted inequality~\eqref{e:ESn1}.

\medskip
{\sc The bound for $\E S_{n,2}$:} 
Let $\CN_{n,u,\delta}$ be the $\delta$-covering number of $\CG_{n,u}$ in the operator sup norm $\|\cdot\|_{u,\infty}$, and suppose $\CG_{n,u}\subset \cup_{j=1}^{\CN_{n,u,\delta}} B(G_j,\delta)$ for an appropriate set $\{G_j\}\subset\CG_{n,u}$. Since $\wh G_{n,u}\in\CG_{n,u}$, there exists a random index $j^{\circ}$ such that $\|\wh G_{n,u}-G_{j^{\circ}} \|_{u,\infty} \leq \delta$. 
It follows that
\begin{align*}
\frac{2}{n}\sum_{i=1}^n \sum_{\ell=1}^{L} w_\ell \left|\E \left(\wh G_{n,u}(X_i)(t_{\ell})
- G_{j^{\circ}}(X_i)(t_{\ell})\right)Z_{i,\ell}\right| 
\le \frac{2\delta}{n}\sum_{i=1}^n \sum_{\ell=1}^{L} w_\ell \E |Z_{i,\ell}| \le 2\delta  \sigma_{\varepsilon,e}.
\end{align*}
Using the fact that $\E(G(X_i)(t_\ell)Z_{i,\ell})=0$ for any deterministic, sample-independent operator $G$ that may depend on $n,u,\epsilon$, in particular for $G=G_{m_n,u,\epsilon}$ and
$G=G_{0,n,u}$, we have
\begin{align} \label{e:S2}
\begin{split}
|\E S_{n,2}| 
&= \left|{2\over n}\sum_{i=1}^n \sum_{\ell=1}^{L} w_\ell \E \left(\wh G_{n,u}(X_i)(t_{\ell})- G_{0,n,u}(X_i) (t_{\ell})\right)Z_{i,\ell}\right| \\
& \le 2\delta \sigma_{\varepsilon,e} + \left|{2\over n}\sum_{i=1}^n \sum_{\ell=1}^{L} w_\ell \E \left(G_{j^{\circ}}(X_i)(t_{\ell})-G_{0,n,u}(X_i)(t_{\ell})\right) Z_{i,\ell}\right| \\
& = 2\delta \sigma_{\varepsilon,e} + \frac{2}{\sqrt{n}} \left|\E\left[\frac{\sum_{i=1}^n \sum_{\ell=1}^{L} w_\ell \left(G_{j^{\circ}}(X_i)(t_{\ell})-G_{0,n,u}(X_i)(t_{\ell})\right) Z_{i,\ell}}
{\sqrt{n} \|G_{j^{\circ}}-G_{0,n,u}\|_n} \cdot
\|G_{j^{\circ}}-G_{0,n,u}\|_n \right]\right|\\
&= 2\delta \sigma_{\varepsilon,e} + \frac{2}{\sqrt{n}}|\E\left[\xi_{j^{\circ}}
\|G_{j^{\circ}}-G_{0,n,u}\|_n \right]|\\
& \leq 2 \delta \sigma_{\varepsilon,e} +\frac{2}{\sqrt{n}} \E\left[\left(\left\|\wh G_{n,u} -G_{0,n,u}\right\|_n+\delta\right)\left|\xi_{j^{\circ}}\right|\right],
\end{split}
\end{align}
where 
$$
\xi_j= \frac{\sum_{i=1}^n  \sum_{\ell=1}^{L} \sqrt{w_\ell}\left(G_j(X_i)(t_{\ell})-G_{0,n,u}(X_i)(t_{\ell})\right)\sqrt{w_\ell}\, Z_{i,\ell}}
{\sqrt{n} \|G_j-G_{0,n,u}\|_n}
$$
with the convention that $\xi_j=0$ whenever $\|G_j-G_{0,n,u}\|_n=0$.
Let 
$$
c_{i,\ell} := \sqrt{w_\ell}\left(G_j(X_i)(t_{\ell})-G_{0,n,u}(X_i)(t_{\ell})\right) \quad\mbox{and}\quad \boldc_i = (c_{i,1},\ldots, c_{i,L})^\top.
$$
Then, conditional on $X_1,\dots,X_n$,
\begin{align*}
\var(\xi_j|X_1,\ldots,X_n) & =
\frac{\sum_{i=1}^n \var\left(\sum_{\ell=1}^{L} c_{i,\ell}\sqrt{w_\ell} Z_{i,\ell}\right)} {\sum_{i=1}^n \sum_{\ell=1}^{L} c_{i,\ell}^2} \\
& \le \frac{\sum_{i=1}^n \boldc_i^\top \
\{r(\ell,\ell') +\sigma_e^2 \ind{\ell=\ell'}\}_{\ell,\ell'} \boldc_i
} {\sum_{i=1}^n \boldc_i^\top\boldc_i} \le B_\varepsilon^2 +\sigma_e^2 =: B_{\varepsilon,e}^2,
\end{align*}
by Lemma \ref{l:eigen}. Applying the Cauchy-Schwarz inequality,
\begin{align} \label{e:C6}
\begin{split}
& \E\left[\left(\left\|\wh G_{n,u} -G_{0,n,u}\right\|_n+\delta\right)\left|\xi_{j^{\circ}}\right|\right]\\
& = \E\left[\left\|\wh G_{n,u} - G_{0,n,u}\right\|_n
\left|\xi_{j^{\circ}}\right|\right]
+ \delta \E\left[\left|\xi_{j^{\circ}}\right|\right] \\
& \le \sqrt{\E\left[\left\|\wh G_{n,u} -G_{0,n,u}\right\|_n^2\right]} \sqrt{\E\left[\left|\xi_{j^{\circ}}\right|^2\right]} + \delta \sqrt{\E\left[\left|\xi_{j^{\circ}}\right|^2\right]} \\
& = \left(\widehat{R}_n\left(\wh G_{n,u}, G_{0,n,u}\right)^{1/2}+\delta\right)\sqrt{\E\left[\left|\xi_{j^{\circ}}\right|^2\right]} \\
& \leq \left(\widehat{R}_n\left(\wh G_{n,u}, G_{0,n,u}\right)^{1/2}+\delta\right) B_{\varepsilon,e} \sqrt{3 \log \CN_{n,u,\delta}+1},
\end{split}
\end{align}
where the last inequality follows from
Lemma C.1 of \cite{schmidt2020nonparametric}, which gives
\[
\E\left[|\xi_{j^\circ}|^2\right]
\le
\E\left[
\max_{1\le j\le \CN_{n,u,\delta}}|\xi_j|^2
\right]
\le
B_{\varepsilon,e}^2(3\log \CN_{n,u,\delta}+1).
\]
Combining \eqref{e:S2} and \eqref{e:C6} yields the bound~\eqref{e:ESn2}.
\end{proof}

\section{Proof of Proposition \ref{p:rate_3}}
\label{ss:p:rate_3}

Recall that
\begin{align*}
\CR\left(\wh G_{n,u},G_{0,n,u}\right) & := \E\left[\left\|\wh G_{n,u}(X)-G_{0,n,u}(X)\right\|_{\bbL^2}^2\right] 
= \E\left[\ind{\|X\|_{\sup, S_n} \le u} \left\|\wh G_{n,u}(X)-G_0(X)\right\|_{\bbL^2}^2\right],\\
R\left(\wh G_{n,u},G_{0,n,u}\right) & := \E\left[\ind{\|X\|_{\sup,S_n}\le u}\left\|\wh G_{n,u}(X)-G_0(X)\right\|_L^2\right].
\end{align*}
Therefore,
\begin{align}\label{e:CR_R_diff_start}
&\left|
\CR\left(\wh G_{n,u},G_{0,n,u}\right)
-
R\left(\wh G_{n,u},G_{0,n,u}\right)
\right|  \nonumber\\
&\quad \le
\E\left[
\ind{\|X\|_{\sup,S_n}\le u}
\left|
\left\|\wh G_{n,u}(X)-G_0(X)\right\|_{\bbL^2}^2
-
\left\|\wh G_{n,u}(X)-G_0(X)\right\|_{L}^2
\right|
\right].
\end{align}
We first focus on bounding
$$
\left|\left\|\wh G_{n,u}(X)-G_0(X)\right\|_{\bbL^2}^2-
\left\|\wh G_{n,u}(X)-G_0(X)\right\|_L^2\right|.
$$
It follows that
\begin{align} \label{RS_approx_error}
\begin{split}
&\int_0^1 \left(\wh G_{n,u}(X)(t)-G_0(X)(t)\right)^2 dt
- \sum_{\ell=1}^{L} w_\ell\left(\wh G_{n,u}(X)(t_\ell)-G_0(X)(t_{\ell})\right)^2 \\
&= \sum_{\ell=1}^L \int_{m_{\ell-1}}^{m_\ell} \left[\left(\wh G_{n,u}(X)(t)-G_0(X)(t)\right)^2 -\left(\wh G_{n,u}(X)(t_\ell)-G_0(X)(t_\ell)\right)^2 \right] dt,
\end{split}
\end{align}
where $w_1=\frac{t_1+t_2}2$, $w_\ell=\frac{t_{\ell+1}-t_{\ell-1}}2$ and $w_L=1-\frac{t_{L-1}+t_L}2$, and $m_0=0$, $m_\ell=\frac{t_\ell+t_{\ell+1}}2$ and $m_L=1$.
Equation~(\ref{RS_approx_error}) expresses the quadrature error incurred by approximating the integral by the grid-based Riemann sum.
Define
\[
M_n(X):=
\esssup_{t\in [0,1]}
\left|
\frac{d}{dt}
\left(\wh G_{n,u}(X)(t)-G_0(X)(t)\right)^2
\right|.
\]
The approximation error in
\eqref{RS_approx_error} is bounded in absolute value by
\[
M_n(X) \sum_{\ell=1}^L \int_{m_{\ell-1}}^{m_\ell} |t-t_\ell| \, dt 
\le M_n(X) \left(\sum_{\ell=1}^L (m_\ell - m_{\ell-1}) \right)\gamma_n 
= M_n(X) \gamma_n .
\]
Combining this with \eqref{e:CR_R_diff_start} gives
\begin{equation}
\label{e:quad_Mn}
\left|
\CR\left(\wh G_{n,u},G_{0,n,u}\right)
-
R\left(\wh G_{n,u},G_{0,n,u}\right)
\right|
\le
\gamma_n
\E\left[
\ind{\|X\|_{\sup,S_n}\le u}M_n(X)
\right] 
\le \gamma_n \E M_n(X).
\end{equation}
It remains to bound $\E M_n(X)$. Since
\[
\frac{d}{dt}\left(\wh G_{n,u}(X)(t)-G_0(X)(t)\right)^2
=
2 \left(\wh G_{n,u}(X)(t)-G_0(X)(t)\right)
\left(\wh G_{n,u}(X)'(t)-G_0(X)'(t)\right),
\]
we have
\begin{align} \label{e:G0_diff}
\begin{split}
M_n(X)
&\le 2\esssup_{t\in [0,1]}\left|\wh G_{n,u}(X)(t)\wh G_{n,u}(X)'(t)\right|
+ 2\esssup_{t\in [0,1]}\left|\wh G_{n,u}(X)(t)G_0(X)'(t)\right| \\
&\quad + 2\esssup_{t\in [0,1]}\left|G_0(X)(t)\wh G_{n,u}(X)'(t)\right|
+ 2\esssup_{t\in [0,1]}\left|G_0(X)(t)G_0(X)'(t)\right|.
\end{split}
\end{align}
To bound the derivative uniformly over the class $\CG_{n,u}$, let $\wh G$ denote an arbitrary element of $\CG_{n,u}$. Then $\wh G(x)(t)$ is of the form
\begin {align*}
& \ind{\|x\|_{\sup,S_n}\le u}\sum_{k=1}^{p_n} \sum_{i=1}^{q_n} c_i^k \sigma\left(\sum_{j=1}^{|S_n|} \xi_{i j}^k x_j+\theta_i^k\right)
\sigma\left(w_k  t+\zeta_k\right), 
\end{align*}
where $c_i^k, \xi_{i j}^k, \theta_i^k, \zeta_k, w_k \in [-b_n,b_n]$ and $\sum_{j=1}^{|S_n|} \ind{\xi_{i j}^k\not=0} \le r_n$. Since $\wh G\in \CG_{n,u}$,
\[
\sup_{\|x\|_{\sup}\le u}\, \sup_{t\in[0,1]} |\wh G(x)(t)|= \|\wh G\|_{u,\infty} \le B u^{\nu}.
\]
Moreover, since $\wh G(x)$ depends on $x$ only through the sampled values
$\{x(s_{n,j})\}_{j=1}^{J_n}$, the same bound also applies on the discrete
truncation set $\{\|x\|_{\sup,S_n}\le u\}$.
On the set $\{\|x\|_{\sup,S_n}\le u\}$, the sparsity condition
$\sum_{j=1}^{|S_n|}\ind{\xi_{ij}^k\ne0}\le r_n$ implies
\[
\left|
\sum_{j=1}^{|S_n|}\xi_{ij}^k x(s_{n,j})+\theta_i^k
\right|
\le b_n(r_nu+1).
\]
Hence,
\begin{align}
\label{e:Ghat_diff}
\sup_{\|x\|_{\sup,S_n}\le u}\esssup_{t\in[0,1]}
\left|
{d\over dt}\wh G(x)(t)
\right|
&\le C_\sigma
\sum_{k=1}^{p_n}\sum_{i=1}^{q_n}
|c_i^k|\,|w_k|\,
\sigma_{b_n(r_nu+1)}
\le
C_\sigma p_nq_nb_n^2\sigma_{b_n(r_nu+1)},
\end{align}
where $C_\sigma:=\esssup_{t\in{\mathbb R}}|\sigma'(t)|$. For notational convenience, we write 
\[\|G_0(X)'\|_{\sup} := \esssup_{t\in(0,1)} |G_0(X)'(t)|.\]
Combining \eqref{e:G0_diff} and \eqref{e:Ghat_diff}, we have
\begin{align*}
\E M_n(X)
&\le 2B u^{\nu}C_\sigma p_nq_nb_n^2 \sigma_{b_n(r_n u +1)}
+ 2 B u^{\nu} \E\left[\|G_0(X)'\|_{\sup}\right] \\
&\quad + 2C_\sigma p_nq_nb_n^2 \sigma_{b_n(r_n u +1)} \E\left[\|G_0(X)\|_{\sup}\right] 
+ 2\E\left[\|G_0(X)\|_{\sup}\|G_0(X)'\|_{\sup}\right] \\
&\le 2B u^{\nu}C_\sigma p_nq_nb_n^2 \sigma_{b_n(r_n u +1)}
+ 2B u^{\nu}\E^{1/2}\left[\|G_0(X)'\|_{\sup}^2\right] \\
&\quad + 2C_\sigma p_nq_nb_n^2 \sigma_{b_n(r_n u +1)} \E\left[\|G_0(X)\|_{\sup}\right] 
+ 2\E^{1/2}\left[\|G_0(X)\|_{\sup}^2\right]
\E^{1/2}\left[\|G_0(X)'\|_{\sup}^2\right].
\end{align*}
Thus, by Assumption~\ref{a:G_0}, there exists a constant $C_u<\infty$,
depending on the fixed truncation level $u$ but independent of $n$, such that
\[ \E M_n(X) \le C_u\left\{ 1+p_nq_nb_n^2\sigma_{b_n(r_nu+1)} \right\}.\]
Since $p_n,q_n,b_n\to\infty$, the constant term can be absorbed into the second
term for all sufficiently large $n$. Hence, after possibly enlarging $C_u$,
\[\E M_n(X)\le C_u p_nq_nb_n^2\sigma_{b_n(r_nu+1)}.\]
Combining this bound with \eqref{e:quad_Mn}, we get
\begin{align*}
& \left|\CR\left(\wh G_{n,u},G_{0,n,u}\right) - R\left(\wh G_{n,u},G_{0,n,u}\right) \right|
\le C_u p_nq_nb_n^2\gamma_n \sigma_{b_n(r_n u +1)}.
\end{align*} 
This completes the proof. 
\qed

\section{Proof of Proposition \ref{p:rate_2}}
\label{ss:p:rate_2}

We follow ideas from step (I) of the proof of Lemma 4 in \citet[Supplement]{schmidt2020nonparametric}.
As before, let $\CN_{n,u,\delta}:=\CN(\delta,\CG_{n,u},\|\cdot\|_{u,\infty})$ be the $\delta$-covering number of $\CG_{n,u}$ in $\|\cdot\|_{u,\infty}$, and assume that $\CG_{n,u}\subset \cup_{j=1}^{\CN_{n,u,\delta}} B(G_j,\delta)$. Let $j^{\circ}$ be a random index such that $\|\wh G_{n,u}-G_{j^{\circ}} \|_{u,\infty} \leq \delta$. Since both $\wh G_{n,u}$ and $G_{j^\circ}$ belong to $\CG_{n,u}$ and hence contain the
factor $\ind{\|x\|_{\sup,S_n}\le u}$, the covering bound implies that, for any $x$,
\[
\|\wh G_{n,u}(x)-G_{j^\circ}(x)\|_L
\le
\|\wh G_{n,u}-G_{j^\circ}\|_{u,\infty}
\le \delta .
\]
Let $\{X_i^{\prime}\}_{i=1}^n$ be an iid. sample from the distribution of $X$, independent of $\{X_i\}_{i=1}^n$. For each $j=1,\ldots,\CN_{n,u,\delta}$, define
$$
g_{j}\left(X_i, X_i^{\prime}\right):= \left\|G_{j}(X_i')-G_{0,n,u}(X_i')\right\|_L^2 -\left\|G_{j}(X_i)-G_{0,n,u}(X_i)\right\|_L^2.
$$
Then,
\begin{align*}
& \left|R\left(\wh G_{n,u}, G_{0,n,u}\right)-\widehat{R}_n\left(\widehat{G}_{n,u}, G_{0,n,u}\right)\right| \\
& =\left|\E\left[\frac{1}{n} \sum_{i=1}^n \left(\left\|\wh G_{n,u}(X_i')-G_{0,n,u}(X_i')\right\|_L^2-\left\|\wh G_{n,u}(X_i)-G_{0,n,u}(X_i)\right\|_L^2\right)\right]\right| \\
& \leq \E\left[\left|\frac{1}{n} \sum_{i=1}^n g_{j^\circ}\left(X_i, X_i^{\prime}\right)\right|\right]\\
& \quad + \left|\E\left[\frac{1}{n} \sum_{i=1}^n \left(\left\|\wh G_{n,u}(X_i')-G_{0,n,u}(X_i')\right\|_L^2- \left\|G_{j^\circ}(X_i')-G_{0,n,u}(X_i')\right\|_L^2\right)\right]\right| \\
& \quad + \left|\E\left[\frac{1}{n} \sum_{i=1}^n \left(\left\|\wh G_{n,u}(X_i)-G_{0,n,u}(X_i)\right\|_L^2-\left\|G_{j^\circ}(X_i)-G_{0,n,u}(X_i)\right\|_L^2\right)\right]\right| .
\end{align*}
Applying the elementary inequality
\begin{align} \label{e:triangle}
\left|\|x+y\|^2-\|x\|^2\right|=\left|2\langle x,y\rangle + \|y\|^2\right|\le 2\|x\|\cdot \|y\| + \|y\|^2,
\end{align}
with $x = G_{j^\circ}(X_i)-G_{0,n,u}(X_i)$ and 
$y = \widehat G_{n,u}(X_i)-G_{j^\circ}(X_i)$,
we obtain
\begin{align*}
& \left|\big\|\wh G_{n,u}(X_i)-G_{0,n,u}(X_i)\big\|_L^2-\left\|G_{j^\circ}(X_i)-G_{0,n,u}(X_i)\right\|_L^2\right| \\
& \le 2 \left\|G_{j^\circ}(X_i)-G_{0,n,u}(X_i)\right\|_L \cdot \big\|\wh G_{n,u}(X_i)-G_{j^\circ}(X_i)\big\|_L + \big\|\wh G_{n,u}(X_i)-G_{j^\circ}(X_i)\big\|_L^2 \\
&\le 2\delta \left\|G_{j^\circ}(X_i)-G_{0,n,u}(X_i)\right\|_L + \delta^2 \\
& \le 2\delta \left\{B u^{\nu}+C_0u^{\nu}\right\} + \delta^2 
  \le B_u\delta+\delta^2,
\end{align*}
where $B_u = 2 (B+C_0) u^{\nu}$. The last inequality follows from the facts that, for any $G\in\CG_{n,u}$,
$\|G\|_{u,\infty}\le B u^{\nu}$,
and that
$\|G_{0,n,u}\|_{u,\infty}\le C_0u^{\nu}$ by~\eqref{e:G0u_sup}. 
Thus,
\begin{align} \label{e:RRhat_diff_2}
\begin{split}
\left|R\left(\wh G_{n,u}, G_{0,n,u}\right)-\widehat{R}_n\left(\widehat{G}_{n,u}, G_{0,n,u}\right)\right| \le \E\left[\Big|\frac{1}{n} \sum_{i=1}^n g_{j^\circ}\left(X_i, X_i^{\prime}\right)\Big|\right] + 2B_u\delta + 2\delta^2.
\end{split}
\end{align}
Next, define the local radius
\begin{align} \label{e:r_j}
r_j:=\sqrt{n^{-1} \log \CN_{n,u,\delta}} \vee \E^{1 / 2}\left[\left\|G_j(X)-G_{0,n,u}(X)\right\|_L^2\right], \qquad j=1,\ldots,\CN_{n,u,\delta},
\end{align}
where $X$ is an independent copy of the predictor and independent of the data $\{(X_i,Y_i)\}_{i=1}^n$.
Note that $r_j$, which is only used in this proof, is unrelated to the constant $r_n$ used to define $\CG_{n,u}$. 
For the data-dependent random index $j^{\circ}$, define
\begin{align*}
r_{j^{\circ}} & :=\sqrt{n^{-1} \log \CN_{n,u,\delta}} \vee \E^{1 / 2}\left[\left\|G_{j^\circ}(X)-G_{0,n,u}(X)\right\|_L^2 \big|\left(X_i, Y_i\right), i \in [n]\right] \\
& \leq \sqrt{n^{-1} \log \CN_{n,u,\delta}}+\E^{1 / 2}\left[\left\|\wh G_{n,u}(X)-G_{0,n,u}(X)\right\|_L^2 \Big|\left(X_i, Y_i\right), i\in [n]\right]+\delta.
\end{align*}
Now consider
$|\sum_{i=1}^n g_{j^\circ}(X_i, X_i')|$.
By inserting the normalizing factor $r_{j^\circ}B_u$, we obtain
\begin{align*}
\left|\sum_{i=1}^n g_{j^\circ}(X_i,X_i')\right|
&=
\frac{\left|\sum_{i=1}^n g_{j^\circ}(X_i,X_i')\right|}{r_{j^\circ}B_u}\,r_{j^\circ}B_u \\
&\le
\max_{1\le j\le \CN_{n,u,\delta}}
\frac{\left|\sum_{i=1}^n g_j(X_i,X_i')\right|}{r_jB_u}
\left(\sqrt{\frac{\log \CN_{n,u,\delta}}{n}}+U+\delta \right)B_u \\
&=
T\left(\sqrt{\frac{\log \CN_{n,u,\delta}}{n}}+U+\delta \right)B_u,
\end{align*}
where 
\begin{align*}
T& :=\max_{1\le j\le \CN_{n,u,\delta}} \left|\sum_{i=1}^n g_j\left(X_i, X_i^{\prime}\right) /\left(r_j B_u\right)\right|, \\
U & :=\E^{1 / 2}\left[\left\|\wh G_{n,u}(X)-G_{0,n,u}(X)\right\|_L^2 \Big|\left(X_i, Y_i\right), i \in [n]\right].
\end{align*}
By \eqref{e:RRhat_diff_2}, the preceding bound on
$\left|\sum_{i=1}^n g_{j^\circ}(X_i,X_i')\right|$, and the
Cauchy-Schwarz inequality,  together with 
$\E\left[U^2\right]=R\left(\wh G_{n,u}, G_{0,n,u}\right)$,
we obtain
\begin{align} \label{e:C2}
\begin{split}
& \left|R\left(\wh G_{n,u}, G_{0,n,u}\right)-\widehat{R}_n\left(\wh G_{n,u}, G_{0,n,u}\right)\right| \\
& \leq \frac{B_u}{n}\, \E\left[TU\right]+\frac{B_u}{n}\left(\sqrt{\frac{\log \CN_{n,u,\delta}}{n}}+\delta\right) \E[T] 
  + 2B_u\delta + 2\delta^2 \\
& \leq \frac{B_u}{n} R\left(\wh G_{n,u}, G_{0,n,u}\right)^{1 / 2} \E^{1 / 2}\left[T^2\right]+\frac{B_u}{n}\left(\sqrt{\frac{\log \CN_{n,u,\delta}}{n}}+\delta\right) \E[T] 
  + 2B_u\delta + 2\delta^2.
\end{split}
\end{align}
Observe that $\E\left[g_j\left(X_i, X_i^{\prime}\right)\right]=0$. Also, for any $\|x\|_{\sup}\le u$,
\[\left\|G_j\left(x\right)-G_{0,n,u}\left(x\right)\right\|_{\sup}
\le B u^{\nu}+C_0u^{\nu} \le (B+C_0)u^{\nu} =\frac{B_u}{2}.\] 
Thus,
\begin{align} \label{e:T_max}
\frac{\left|g_j\left(X_i, X_i^{\prime}\right)\right|}{r_j B_u} 
\le\frac{\left\|G_j(X_i')-G_{0,n,u}(X_i')\right\|_L^2+
\left\|G_j(X_i)-G_{0,n,u}(X_i)\right\|_L^2}{r_jB_u} 
\leq \frac{B_u^2/2}{r_jB_u} = \frac{B_u}{2r_j},
\end{align}
and
\begin{align} \label{e:T_var}
\begin{split}
\var\left(\frac{g_j\left(X_i, X_i^{\prime}\right)}{r_jB_u}\right) 
& =\frac{2}{(r_jB_u)^2} \var\left(\left\|G_j\left(X_i\right)-G_{0,n,u}\left(X_i\right)\right\|_L^2\right) \\
& \leq\frac{2}{(r_jB_u)^2} \E\left[\left\|G_j\left(X_i\right)-G_{0,n,u}\left(X_i\right)\right\|_L^4\right] \\
& \leq \frac{2}{(r_jB_u)^2}\cdot \frac{B_u^2}{4} \, \E\left[\left\|G_j\left(X_i\right)-G_{0,n,u}\left(X_i\right)\right\|_L^2\right]\le \frac{1}{2},
\end{split}
\end{align}
where the last step follows from the definition of $r_j$ in~\eqref{e:r_j}.
Applying a union bound argument together with Bernstein's inequality using \eqref{e:T_max} and \eqref{e:T_var}, we obtain
\begin{align*}
\pr(T \geq t) & \leq 1 \wedge 2 \CN_{n,u,\delta} \max_{1\le j\le \CN_{n,u,\delta}} \exp\left(-\frac{t^2}{n+B_ut/(3r_j)}\right)  \\
& = 1 \wedge 2 \CN_{n,u,\delta} \exp\left(-\frac{t^2}{n+B_ut/(3r_\star)}\right),
\end{align*}
where $r_\star$ is the minimal local radius defined by
\begin{equation}\label{e:rstar}
\begin{split}
r_\star
&:= \min_{1\le j\le \CN_{n,u,\delta}} r_j 
   = \min_{1\le j\le \CN_{n,u,\delta}} \!\left\{
       \sqrt{n^{-1}\log \CN_{n,u,\delta}}
       \;\vee\;
       \E^{1/2}\!\left[
         \|G_j(X)-G_{0,n,u}(X)\|_L^2
       \right]
     \right\} \\
&~= \sqrt{n^{-1}\log \CN_{n,u,\delta}}
   \;\vee\;
   \min_{1\le j\le \CN_{n,u,\delta}} \E^{1/2}\!\left[
     \|G_j(X)-G_{0,n,u}(X)\|_L^2
   \right].
\end{split}
\end{equation}
Without loss of generality, we may choose the finite covering set so that it contains the approximating
operator $G_{m_n,u,\epsilon}$; this increases the covering number by at most one and does not affect the entropy bound. 
In that case, by \eqref{e:ESn1} together with the identity \eqref{e:ESn1_1} from Lemma~\ref{l:rate_ESn},
\begin{align*}
\E\left[\left\|G_{m_n,u,\epsilon}(X)-G_{0,n,u}(X)\right\|_L^2\right] 
\le
\E\left[\left\|G_{m_n,u,\epsilon}(X)-G_{0,n,u}(X)\right\|_{\sup}^2\right]
\le \Delta_{n,u,\epsilon},
\end{align*}
where $\Delta_{n,u,\epsilon}$ is given by~\eqref{e:Delta}.
Thus, we have
\begin{align} \label{e:rstar_bound} 
\sqrt{n^{-1} \log \CN_{n,u,\delta}} \le r_\star \le \sqrt{n^{-1} \log \CN_{n,u,\delta}} \vee \sqrt{\Delta_{n,u,\epsilon}}
=:\Omega_{n,u,\delta,\epsilon}.
\end{align}
Focus on the exponential bound:
\begin{align*}
\pr(T \geq t) & \leq  1 \wedge 2 \CN_{n,u,\delta}\,
\exp\left(-\frac{t^2}{n+B_ut/(3r_\star)}\right).
\end{align*}
It follows that if $t\ge r_\star n$,
\[
\frac{t^2}{n+B_ut/(3r_\star)}
\ge
\left(1+\frac{B_u}{3}\right)^{-1}r_\star t
=: c r_\star t,
\]
where $c=(1+B_u/3)^{-1} < 1$.
Letting $t_0=c^{-1} r_\star n$, 
we have
\begin{align*}
\E[T] & =\int_0^{\infty} \pr(T \geq t) d t 
   \le t_0 +\int_{t_0}^{\infty} \pr(T \geq t) d t \\
& \leq c^{-1} r_\star n +\int_{c^{-1}r_\star n}^\infty 2 \CN_{n,u,\delta} \exp \left(-cr_\star t\right) d t \\
&= c^{-1} r_\star n + {2 \CN_{n,u,\delta}\over cr_\star}\int_{r_\star^2n}^\infty  \exp(-s) ds  \\
&= c^{-1} r_\star n + {2 \CN_{n,u,\delta}\over cr_\star} \exp(-r_\star^2n)   \\
&\le c^{-1} r_\star n + {2 \CN_{n,u,\delta}\over cr_\star} \exp(-\log \CN_{n,u,\delta}) \quad \mbox{by \eqref{e:r_j}} \\
&= c^{-1} \left(r_\star n + {2 \over r_\star}\right)  
   \le c^{-1} r_\star n \left(1 + {2 \over \log \CN_{n,u,\delta}}\right) \\
& = (1+ B_u/3) r_\star n \left(1 + {2 \over \log \CN_{n,u,\delta}}\right) 
  \le C_1 B_u r_\star n
\end{align*}
for some $C_1<\infty$ using $\log \CN_{n,u,\delta}\ge 1$ and the fact that
$B_u$ is bounded away from zero for fixed $u$.
Applying this bound to the term that involves $\E [T]$ on the right-hand side of \eqref{e:C2}, we obtain
\begin{align} \label{e:T1}
\frac{B_u}{n}\left(\sqrt{\frac{\log \CN_{n,u,\delta}}{n}}+\delta\right) \E[T] \le 
C_1B_u^2\left(\sqrt{\frac{\log \CN_{n,u,\delta}}{n}}+\delta\right) r_\star.
\end{align}
To bound the second moment, we proceed in a similar manner:
\begin{align*}
\E\left[T^2\right] & =\int_0^{\infty} \pr\left(T^2 \geq s\right) d s=\int_0^{\infty} \pr(T \geq \sqrt{s}) d s \\
& \le t_0^2 + \int_{t_0^2}^\infty
2 \CN_{n,u,\delta} \exp \left(-cr_\star \sqrt{s}\right) d s \\
&= (c^{-1}r_\star n)^2 + 4 \CN_{n,u,\delta}  
(t_0 cr_\star+1) e^{-t_0 cr_\star} / (cr_\star)^2 \\
&= (c^{-1}r_\star n)^2 + 4 \CN_{n,u,\delta}  
(r_\star^2 n+1) e^{-r_\star^2 n} / (cr_\star)^2 \\
&\le (c^{-1}r_\star n)^2 + 4   
(r_\star^2 n+1) / (cr_\star)^2 \\
& = (c^{-1}r_\star n)^2 + 4 c^{-2} n + 4c^{-2} r_\star^{-2} \\
& \le  (c^{-1}r_\star n)^2 + 4 c^{-2} n + 4c^{-2} {n\over \log \CN_{n,u,\delta}} \\
& \le C_2 B_u^2 \left((r_\star n)^2+n\right)
\end{align*}
for some $C_2<\infty$.
Applying this to the term that involves $\E [T^2]$ on the right-hand side of \eqref{e:C2}, we get
\begin{align} \label{e:T2} 
\frac{B_u}{n} R\left(\wh G_{n,u}, G_{0,n,u}\right)^{1 / 2} \E^{1 / 2}\left[T^2\right] 
\le C_2^{1/2} \frac{B_u^2}{n} R\left(\wh G_{n,u}, G_{0,n,u}\right)^{1 / 2} \left((r_\star n)^2+n\right)^{1/2}.
\end{align}
By \eqref{e:C2}, \eqref{e:T1} and \eqref{e:T2},
\begin{align*} 
& \left|R\left(\wh G_{n,u}, G_{0,n,u}\right)-\widehat{R}_n\left(\wh G_{n,u}, G_{0,n,u}\right)\right| \\
& \le \frac{C_2^{1/2}B_u^2}{n} R\left(\wh G_{n,u}, G_{0,n,u} \right)^{1 / 2} \left((r_\star n)^2+n\right)^{1/2} + C_1B_u^2\left(\sqrt{\frac{\log \CN_{n,u,\delta}}{n}}+\delta\right) r_\star \\
& \quad + 2B_u\delta + 2\delta^2.
\end{align*}
Express this inequality as $|a-b| \leq 2 \sqrt{a} c+d$, with
\begin{align*}
a &:= R\left(\wh G_{n,u}, G_{0,n,u}\right) \\
b &:= \widehat{R}_n\left(\wh G_{n,u}, G_{0,n,u}\right) \\
c &:= \frac{C_2^{1/2}B_u^2}{2n} \left((r_\star n)^2+n\right)^{1/2} \\
d &:= C_1B_u^2\left(\sqrt{\frac{\log \CN_{n,u,\delta}}{n}}+\delta\right) r_\star + 2B_u\delta + 2\delta^2.
\end{align*}
The preceding bound implies that
\begin{align*} 
a \le b + 2 \sqrt{a} c+d,~
\mbox{ and hence }
(\sqrt{a}-c)^2 = a - 2 \sqrt{a} c + c^2 \le b+d + c^2.
\end{align*}
Therefore, 
$$
\sqrt{a} \le \sqrt{b+d + c^2} + c
\quad\mbox{or}\quad a \le 2(b+d+2c^2).
$$
Substituting the definitions of $a,b,c,d$ gives
\begin{align*}
R\left(\wh G_{n,u}, G_{0,n,u}\right) \le & 2 \widehat{R}_n\left(\wh G_{n,u}, G_{0,n,u}\right)  + \frac{C_2B_u^4}{n^2} \left((r_\star n)^2+n\right) \\
& + 2 C_1B_u^2\left(\sqrt{\frac{\log \CN_{n,u,\delta}}{n}}+\delta\right) r_\star
  + 4(B_u\delta + \delta^2).
\end{align*}
Using \eqref{e:rstar_bound}, we have $r_\star \le \Omega_{n,u,\delta,\epsilon}$, and therefore the bound~\eqref{e:R-Rhat_1} follows. 
\qed

\section{Proof of Proposition \ref{p:hatRn}}
\label{ss:p:hatRn}


By Lemma~\ref{l:basic_ineq}, we have
\begin{align} \label{e:errors}
\wh R_n(\wh G_{n,u},G_{0,n,u}) \le \E S_{n,1} + |\E S_{n,2}|.
\end{align}
Lemma~\ref{l:rate_ESn} deals with the two expectations on the right-hand side of \eqref{e:errors}.
By \eqref{e:errors} and \eqref{e:ESn2}, 
\begin{align*}
\widehat{R}_n\left(\wh G_{n,u}, G_{0,n,u}\right)
& \le \E S_{n,1}
+ 2\delta \sigma_{\varepsilon,e} + 2 B_{\varepsilon,e}\left(\widehat{R}_n\left(\wh G_{n,u}, G_{0,n,u}\right)^{1/2}+\delta\right) \sqrt{3 \log \CN_{n,u,\delta}+1\over n}.
\end{align*}
Denote this inequality as
$a^2 \le b + (a+c)d$,
with
$$
a:=\sqrt{\widehat{R}_n\left(\wh G_{n,u}, G_{0,n,u}\right)}, \ b:=\E S_{n,1}+2\delta \sigma_{\varepsilon,e},\  c:=\delta,\ d= 2 B_{\varepsilon,e}\sqrt{3 \log \CN_{n,u,\delta}+1\over n}.
$$
After completing the square, this implies
$(a-d/2)^2 \le b+ cd + d^2/4$.
Thus,
$$
a \le d/2 + \sqrt{b+ cd + d^2/4},\quad {\rm then}~
a^2 \le d^2/2 + 2(b+ cd + d^2/4) = 2(b+cd) + d^2.
$$
In other words,
\begin{align*} 
& \widehat{R}_n\left(\wh G_{n,u}, G_{0,n,u}\right) \\
&\le 2\left(\E S_{n,1}+2\delta \sigma_{\varepsilon,e}+ 2\delta  
B_{\varepsilon,e}\sqrt{3 \log \CN_{n,u,\delta}+1\over n}\right) + \left(2 B_{\varepsilon,e}\sqrt{3 \log \CN_{n,u,\delta}+1\over n}\right)^2.
\end{align*}
Finally, applying the bound for $\E S_{n,1}$ in~\eqref{e:ESn1} gives the asserted bound in Proposition~\ref{p:hatRn}.
\qed

%
%
\section{Supplementary Details for the Simulation Study}
\label{s:FFBNN_supp}

\subsection{FFBNN architecture}\label{ss:ffbnn_architecture}

We compared the performance of SNO and FFDNN in Section \ref{s:simulation}. Here, we include the same comparison of SNO and FFBNN.

The FFBNN benchmark uses the same preprocessing and network architecture as FFDNN.
Thus, the predictor and response curves are linearly interpolated onto regular grids, yielding
$x\in\mathbb{R}^{m_x}$ and $\widehat{y}\in\mathbb{R}^{m_y}$ with $m_x=100$ and $m_y=75$.
The network has two hidden layers, each consisting of four parallel channels of width $S=30$.
For $k=1,\ldots,4$,
\[
h_1^{(k)}=\operatorname{ReLU}\bigl(m_x^{-1}W_1^{(k)}x+b_1^{(k)}\bigr),
\qquad
h_2^{(k)}=\operatorname{ReLU}\bigl(S^{-1}\sum_{j=1}^4W_2^{(j,k)}h_1^{(j)}+b_2^{(k)}\bigr),
\]
and the output is
\[
\widehat{y}=b_3+S^{-1}\sum_{k=1}^4W_3^{(k)}h_2^{(k)}.
\]
The only difference from FFDNN is the representation of the weight matrices.
In FFBNN, each weight matrix is represented using tensor-product cubic B-spline bases as
\[
W_1^{(k)}=B_{1,\mathrm{out}}C_1^{(k)}B_{1,\mathrm{in}}^\top,\qquad
W_2^{(j,k)}=B_{2,\mathrm{out}}C_2^{(j,k)}B_{2,\mathrm{in}}^\top,\qquad
W_3^{(k)}=B_{3,\mathrm{out}}C_3^{(k)}B_{3,\mathrm{in}}^\top.
\]
Here, $B_{\ell,\mathrm{in}}$ and $B_{\ell,\mathrm{out}}$ are fixed B-spline design matrices, while $C_\ell$ denotes the trainable coefficient matrix.
The input- and output-side numbers of B-spline basis functions are $(5,6)$, $(7,8)$, and $(9,10)$ for the first hidden, second hidden, and output layers, respectively.
Thus, FFBNN can be viewed as a lower-dimensional parameterization of the weight matrices used in FFDNN.
(The bias terms are defined in the same way as in FFDNN.)

\subsection{Comparison with FFBNN}\label{ss:ffbnn_results}

Table~\ref{tab:simulation_irmse_ffbnn} compares the prediction performance of FFBNN and SNO across the eight data-generating models, reporting the mean and standard deviation of the run-wise iRMSE values over the 50 simulation runs. 
Figures~\ref{fig:simulation_histogram_ffbnn} and \ref{fig:simulation_boxplot_ffbnn} show the corresponding histograms and boxplots. As in the comparison with FFDNN in the main article, SNO generally yields smaller and less variable iRMSE values than FFBNN across the eight settings.

Table~\ref{tab:simulation_time_ffbnn} reports the corresponding computational costs. 
As in the comparison with FFDNN in the main article, SNO generally requires fewer training epochs and a shorter total training time than FFBNN. FFBNN also requires more time per epoch in the present implementation. These timing results should be regarded as approximate because wall-clock time can depend on the computational environment, system load, and implementation details.

\begin{table}[h!]
\centering
\caption{Comparison of prediction performance for FFBNN and SNO across eight data-generating models. Mean and Std denote the mean and standard deviation of the iRMSE values over the 50 simulation runs, respectively.}
\label{tab:simulation_irmse_ffbnn}

{\small
\begin{tabular}{lcccc}
\toprule
\multirow{2}{*}{Data-generating model}
& \multicolumn{2}{c}{FFBNN}
& \multicolumn{2}{c}{SNO} \\
\cmidrule(lr){2-3} \cmidrule(lr){4-5}
& Mean & Std & Mean & Std \\
\midrule
Linear            & 0.1615 & 0.1035 & 0.0899 & 0.0090 \\
CAM               & 0.3424 & 0.2213 & 0.1393 & 0.0281 \\
Single-index      & 1.4220 & 0.1269 & 0.1574 & 0.0351 \\
Multiple-index    & 1.3490 & 0.3622 & 0.6164 & 0.2675 \\
Quadratic         & 0.6338 & 0.0637 & 0.1769 & 0.0380 \\
Complex quadratic & 0.8114 & 0.3273 & 0.4782 & 0.2857 \\
Dynamical 1       & 0.2815 & 0.0503 & 0.1156 & 0.0074 \\
Dynamical 2       & 0.3292 & 0.0978 & 0.0984 & 0.0052 \\
\bottomrule
\end{tabular}
}
\end{table}

\begin{table}[h!]
\centering
\caption{Comparison of computational costs for FFBNN and SNO across eight data-generating models. The reported values are averages over the 50 simulation runs.}
\label{tab:simulation_time_ffbnn}

{\small
\begin{tabular}{lcccccc}
\toprule
\multirow{2}{*}{Data-generating model}
& \multicolumn{2}{c}{Epochs}
& \multicolumn{2}{c}{Total time}
& \multicolumn{2}{c}{Time/epoch} \\
\cmidrule(lr){2-3} \cmidrule(lr){4-5} \cmidrule(lr){6-7}
& FFBNN & SNO & FFBNN & SNO & FFBNN & SNO \\
\midrule
Linear            & 500.0 & 110.3 & 68.1 & 10.3 & 0.136 & 0.094 \\
CAM               & 484.4 & 128.0 & 66.2 & 11.9 & 0.137 & 0.093 \\
Single-index      & 190.9 & 150.2 & 26.4 & 13.8 & 0.139 & 0.092 \\
Multiple-index    & 321.1 & 188.1 & 43.8 & 17.2 & 0.137 & 0.092 \\
Quadratic         & 494.2 & 170.1 & 67.7 & 15.7 & 0.137 & 0.092 \\
Complex quadratic & 462.7 & 140.5 & 63.4 & 13.0 & 0.137 & 0.093 \\
Dynamical 1       & 499.3 & 112.4 & 67.9 & 10.4 & 0.136 & 0.093 \\
Dynamical 2       & 500.0 & 97.8  & 68.3 & 9.2  & 0.137 & 0.094 \\
\bottomrule
\end{tabular}
}
\end{table}

\begin{figure}[h!]
\centering
\includegraphics[width=\textwidth]{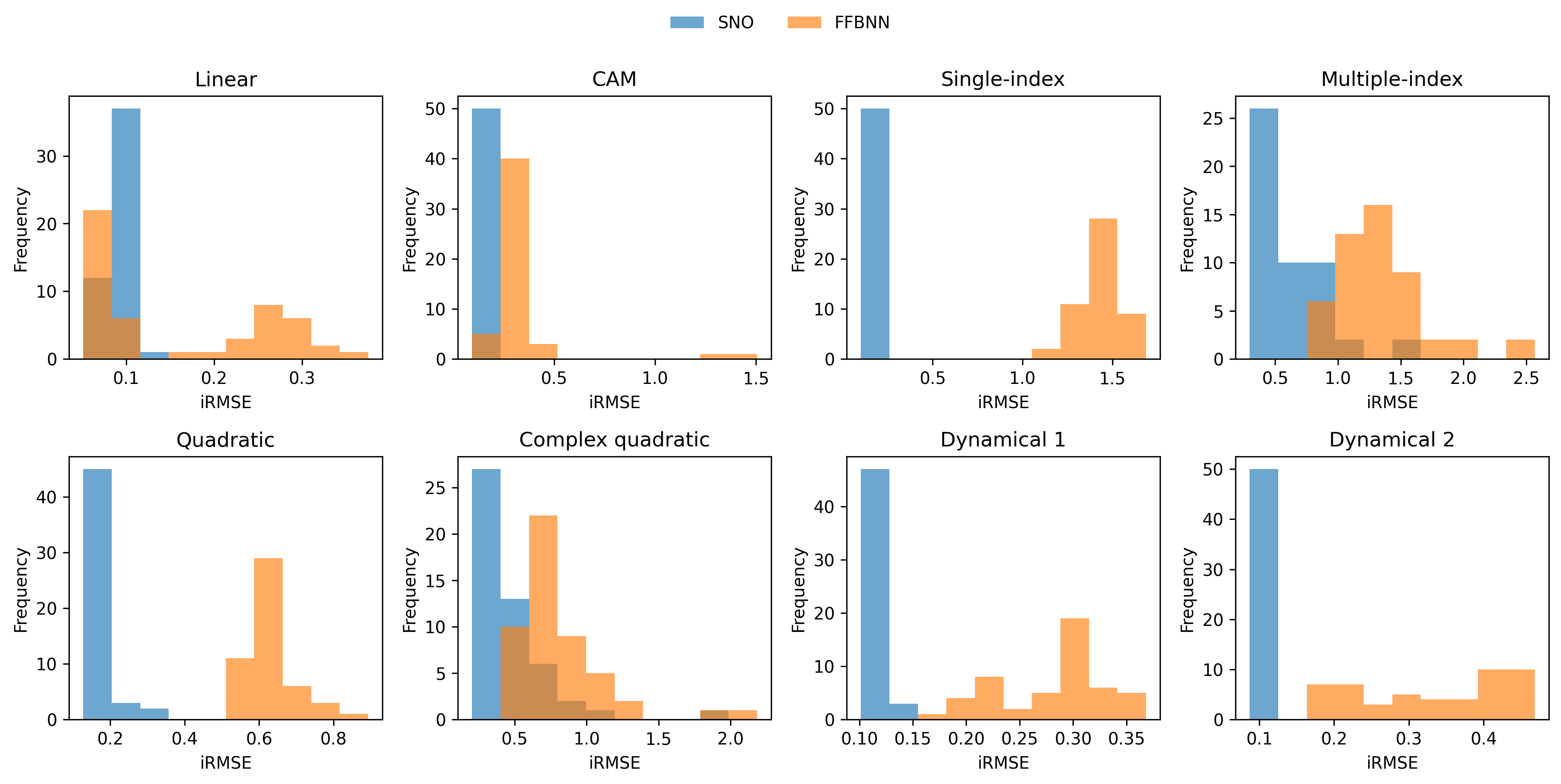}
\caption{Histograms comparing the run-wise iRMSE values of SNO and FFBNN over the 50 replicate runs.}
\label{fig:simulation_histogram_ffbnn}
\end{figure}

\begin{figure}[h!]
\centering
\includegraphics[width=\textwidth]{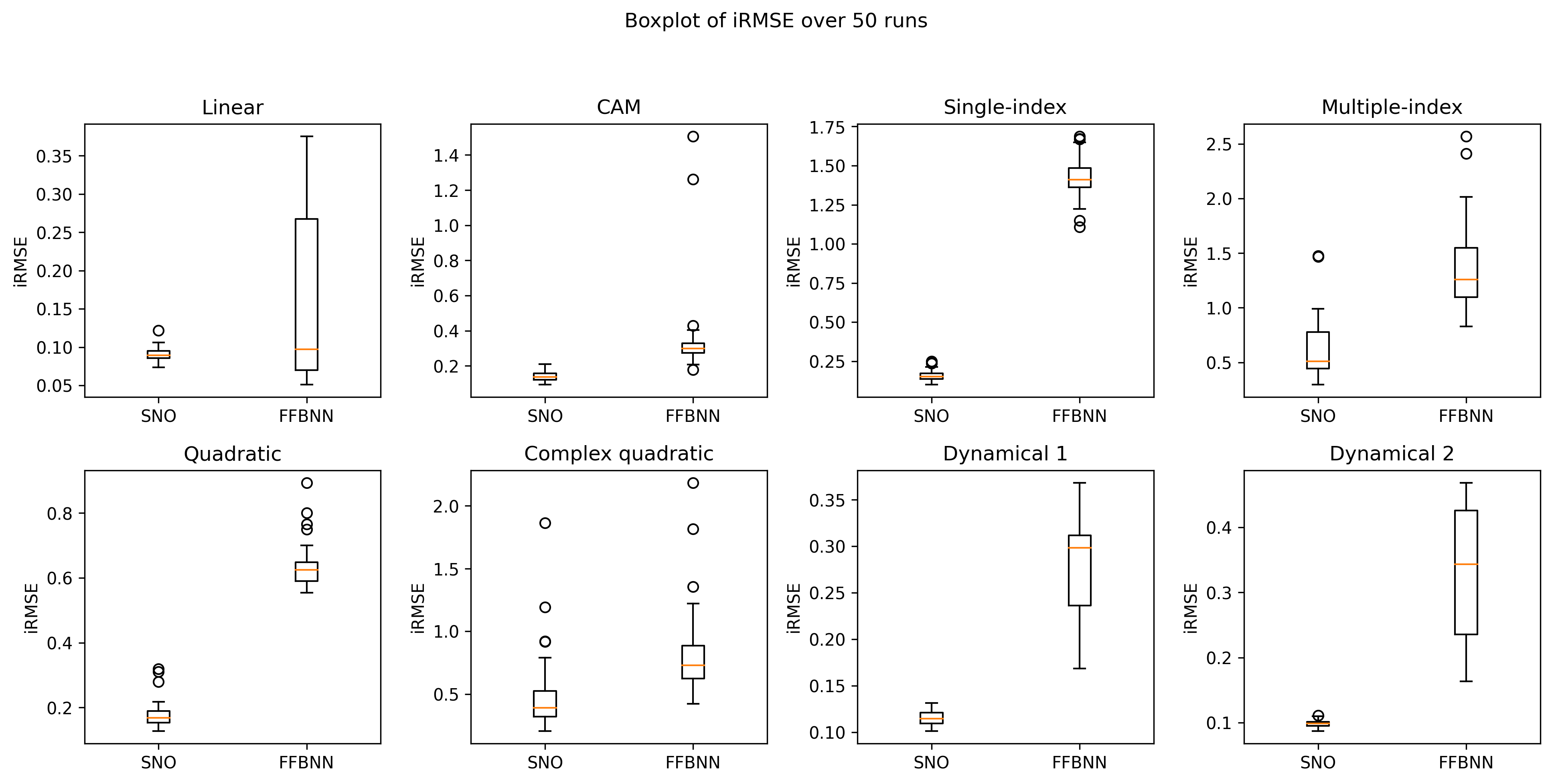}
\caption{Boxplots comparing the run-wise iRMSE values of SNO and FFBNN over the 50 replicate runs.}
\label{fig:simulation_boxplot_ffbnn}
\end{figure}

\subsection{Optimization settings}

Table~\ref{tab:simulation_training_settings} summarizes the optimization settings used for training.
The same optimization settings were used for all three methods, except for the training loss. 

\begin{table}[ht]
\centering
\caption{Optimization settings for the simulation study.}
\label{tab:simulation_training_settings}
\begin{tabular}{lcc}
\hline
Setting & SNO & FFDNN/FFBNN \\
\hline
Optimizer & AdamW & AdamW \\
Learning rate & $5\times 10^{-4}$ & $5\times 10^{-4}$ \\
Weight decay & $10^{-6}$ & $10^{-6}$ \\
Mini-batch size & $32$ & $32$ \\
Maximum number of epochs & $500$ & $500$ \\
Early-stopping patience & $50$ & $50$ \\
Early-stopping criterion & Validation loss & Validation loss \\
Training loss
& Weighted integrated MSE
& MSE on interpolated grid \\
Model retained
& Minimum validation loss
& Minimum validation loss \\
\hline
\end{tabular}
\end{table}

%
%
\section{Supplementary Details for Analysis of Argo Data}
\label{s:Argo_supp}

Table~\ref{tab:argo_training_settings} summarizes the optimization settings used for the BGC-Argo data analysis.

\begin{table}[ht]
\centering
\caption{Optimization settings for the BGC-Argo data analysis.}
\label{tab:argo_training_settings}
\begin{tabular}{ll}
\hline
Setting & Value \\
\hline
Optimizer & AdamW \\
Learning rate & $10^{-3}$ \\
Weight decay & $10^{-5}$ \\
Mini-batch size & $32$ \\
Maximum number of epochs & $300$ \\
Early-stopping patience & $30$ \\
Early-stopping criterion & Validation loss \\
Training loss & Weighted integrated mean squared error \\
Gradient clipping & Maximum norm $5$ \\
\hline
\end{tabular}
\end{table}
\pagebreak


\clearpage

\section{Summary of notation} \label{s:notation}

This section summarizes the main notation used throughout the paper. The notation is grouped according to the model, the separable neural-operator representation, the theoretical sieve class, the sampling design, and the numerical implementation. 

\begin{longtable}{p{0.22\textwidth}p{0.72\textwidth}}
\hline
\textbf{Notation} & \textbf{Meaning} \\
\hline
\endfirsthead

\hline
\textbf{Notation} & \textbf{Meaning} \\
\hline
\endhead

\hline
\endfoot

\multicolumn{2}{l}{\textbf{Function spaces, norms, and basic objects}} \\[1mm]

$[0,1]$ 
& The common domain on which the input and output functions are defined after normalization. \\

$C[0,1]$ 
& The space of continuous real-valued functions on $[0,1]$. \\

$\bbL^2[0,1]$ 
& The Hilbert space of square-integrable real-valued functions on $[0,1]$. \\

$\|x\|_{\sup,S}$ 
& The sup norm of a function $x$ over a set $S\subset[0,1]$:
$
\|x\|_{\sup,S}=\sup_{s\in S}|x(s)|.
$ \\

$\|x\|_{\sup}$ 
& The full sup norm on $[0,1]$:
$
\|x\|_{\sup}=\sup_{s\in[0,1]}|x(s)|.
$ \\

$\|f\|_{\bbL^2}$
& The standard $L^2$ norm of $\bbL^2[0,1]$:
$
\|f\|_{\bbL^2}^2=\int_0^1 f(t)^2 \,dt.
$ \\

$\|G\|_{u,\infty}$ 
& The operator sup norm restricted to the input ball $\{x:\|x\|_{\sup}\le u\}$:
$
\|G\|_{u,\infty}
=
\sup_{\|x\|_{\sup}\le u,\;t\in[0,1]} |G(x)(t)|.
$ \\[3mm]

\multicolumn{2}{l}{\textbf{Function-on-function regression model}} \\[1mm]

$X$ 
& Functional predictor. In the main theoretical development, $X$ is treated as a random element of $C[0,1]$. \\

$Y$ 
& Functional response. \\

$\varepsilon$ 
& Functional error process in the regression model. \\

$G_0$ 
& The true regression operator in the model
$
Y(t)=G_0(X)(t)+\varepsilon(t),\quad t\in[0,1].
$ \\

$C_X(s,t)$ 
& Covariance kernel of the predictor process:
$
C_X(s,t)=\E\{X(s)X(t)\}
$ (wlog assuming $E(X)\equiv 0$).\\

$C_\varepsilon(s,t)$ 
& Covariance kernel of the error process:
$
C_\varepsilon(s,t)=\E\{\varepsilon(s)\varepsilon(t)\}.
$ \\

$B^2_\varepsilon$
& $B^2_\varepsilon:=\sup_{t\in[0,1]} C_\varepsilon(t,t)<\infty$.\\

$d_X(s,t)$ 
& Canonical metric associated with $X$:
$
d_X(s,t)=\E^{1/2}\{(X(s)-X(t))^2\}.
$ \\

$\lambda,\beta$ 
& Constants appearing in the canonical-metric regularity condition:
$
d_X(s,t)\le \lambda |s-t|^\beta.
$ \\

$\alpha,\nu,C_0$ 
& Constants in the H\"older continuity and polynomial growth conditions on $G_0$:
$
\|G_0(x_1)-G_0(x_2)\|_{\sup}
\le C_0 u^{\nu} \|x_1-x_2\|_{\sup}^{\alpha}$
for $\|x_1\|_{\sup},\|x_2\|_{\sup}\le u$, and
$\|G_0(x)\|_{\sup}
\le C_0(1\vee\|x\|_{\sup}^{\nu})$ for $x\in C[0,1]$. \\[3mm]

\multicolumn{2}{l}{\textbf{Separable neural-operator representation}} \\[1mm]

$G_{\theta,\eta}$ 
& A separable neural operator with trainable parameters $\theta$ and $\eta$. \\

$p$ 
& Rank or number of separable terms in the SNO representation. \\

$c_k(x;\theta)$ 
& The $k$-th coefficient component, depending on the input function $x$ and parameterized by the coefficient network. \\

$\phi_k(t;\eta)$ 
& The $k$-th basis component, depending on the output argument $t$ and parameterized by the basis network. \\

$G_{\theta,\eta}(x)(t)$ 
& The SNO representation:
$
G_{\theta,\eta}(x)(t)
=
\sum_{k=1}^{p} c_k(x;\theta)\phi_k(t;\eta).
$ \\

$\theta$ 
& Parameters of the coefficient network. \\

$\eta$ 
& Parameters of the basis network. \\

$\sigma$ and $C_\sigma$
& $\sigma$ is an activation function. In the theoretical sieve analysis, $\sigma$ is assumed to be a Tauber--Wiener activation with an essentially uniformly bounded first derivative  $C_\sigma:=\esssup_{t\in\mathbb{R}} |\sigma'(t)|$. \vskip.1cm
\\ [3mm]

\multicolumn{2}{l}{\textbf{Sampling design for the theoretical analysis}} \\[1mm]

$n$ 
& Number of independent training pairs $(X_i,Y_i)$. \\

$S_n$ 
& Input sampling grid:
$
S_n=\{s_{n,j}:j\in[J_n]\}.
$ \\

$T_n$ 
& Output sampling grid:
$
T_n=\{t_{n,\ell}:\ell\in[L_n]\}.
$ \\

$J_n$ 
& Number of input-grid points in the theoretical sampling design. \\

$L_n$ 
& Number of output-grid points in the theoretical sampling design. \\

$s_{n,j}$ 
& The $j$-th input-grid point. \\

$t_{n,\ell}$ 
& The $\ell$-th output-grid point. \\

$\rho_n$ 
& Maximal input-grid spacing:
$
\rho_n=\max_{0\le j\le J_n}(s_{n,j+1}-s_{n,j}),
$
with $s_{n,0}=0$ and $s_{n,J_n+1}=1$. \\

$\gamma_n$ 
& Maximal output-grid spacing:
$
\gamma_n=\max_{0\le \ell\le L_n}(t_{n,\ell+1}-t_{n,\ell}),
$
with $t_{n,0}=0$ and $t_{n,L_n+1}=1$. \\

$X_{i,j}$ 
& Observed value of the $i$-th predictor curve at the $j$-th input-grid point:
$
X_{i,j}=X_i(s_{n,j}).
$ \\

$Y_{i,\ell}$ 
& Observed value of the $i$-th response curve at the $\ell$-th output-grid point with measurement error: $ Y_{i,\ell} := Y_i(t_{n,\ell}) + e_{i,\ell}$. \\

$e_{i,\ell}$ 
& Scalar measurement error in the discretely observed response. \\

$\sigma_e^2$
& Uniform bound for the measurement error variances.\\

$\widetilde X_n(s)$
& Kriging predictor, or equivalently the $L^2$ projection, of $X(s)$ based on the sampled predictor values on the grid $S_n$, $\{X(s):s\in S_n\}$. \\

$R_n(s)$
& Kriging residual process of the predictor:
$R_n(s):=X(s)-\widetilde X_n(s)$. \\ [3mm]

\multicolumn{2}{l}{\textbf{Truncation and truncated targets}} \\[1mm]
$u$ 
& Truncation radius used to restrict the input-function domain. \\

$V_{i,n,u}$ 
& Truncation indicator:
$
V_{i,n,u}
=
\mathbf 1\{\|X_i\|_{\sup,S_n}\le u\}.
$ \\

$G_{0,u}$ 
& Ideal truncated target based on the full sup norm:
$
G_{0,u}(x)
=
\mathbf 1\{\|x\|_{\sup}\le u\}G_0(x).
$ \\

$G_{0,n,u}$ 
& Grid-based truncated target:
$
G_{0,n,u}(x)
=
\mathbf 1\{\|x\|_{\sup,S_n}\le u\}G_0(x).
$ \\[3mm]

\multicolumn{2}{l}{\textbf{Theoretical sieve, approximation, and complexity quantities}} \\[1mm]
$\mathcal G_{n,u}$ 
& The shallow separable sieve class used in the theoretical analysis. It serves as a tractable surrogate for the practical SNO estimator. \\

$\widehat G_{n,u}$ 
& Empirical risk minimizer over $\mathcal G_{n,u}$. \\

$p_n$ 
& Number of separable terms in the theoretical sieve representation. \\

$q_n$ 
& Number of hidden units in each coefficient component of the theoretical sieve representation. \\

$r_n$
& Sparsity level controlling the number of input locations with nonzero weights in each hidden unit of the coefficient components of $G$ in $\CG_{n,u}$. \\

$b_n$ 
& Uniform bound on the parameters in the theoretical sieve class. \\

$B$ 
& The uniform bound constant used in the sieve network class
$
\|G\|_{u,\infty}\le B u^\nu.
$ \\

$B_u$
& A $u$-dependent constant:
$B_u=2(B+C_0)u^\nu$. \\

$\Delta_{n,u,\epsilon}$ 
& Combined input-grid approximation and truncation error, with tolerance $\epsilon$. \\

$\Omega_{n,u,\delta,\epsilon}$
& Combined sieve-complexity and approximation-error:
$
\Omega_{n,u,\delta,\epsilon}
=
\sqrt{n^{-1}\log\CN_{n,u,\delta}}
\vee
\sqrt{\Delta_{n,u,\epsilon}}.
$ \\

$G_{m,u,\epsilon}$
& $G_{m,u,\epsilon}(x):=\ind{\|x\|_{\sup,S_m}\le u}\,G(x)$, where $G(x)$ is defined in \eqref{e:phin0}. \\ 

$m_n$, or $m_{n,u,\epsilon}$ in full notation
& $\max\{m\le n: G_{m,u,\epsilon}\in\CG_{n,u}\}$. \\

$\sigma_s$
& Bound on the activation function over the range $[-s,s]$: $\sigma_s :=
\sup_{|t|\le s}|\sigma(t)|$.
\\

$\mathcal N_{n,u,\delta}$ 
& Covering number of $\mathcal G_{n,u}$ with radius $\delta$ under the truncated operator norm $\|\cdot\|_{u,\infty}$. \\

$\delta$, $\delta_n$ 
& Covering radius and covering radius sequence. \\[3mm]

\multicolumn{2}{l}{\textbf{Risk and loss functions in theoretical analysis}} \\[1mm]

$\|G(x)\|_{\bbL^2}$
& Standard $\bbL^2[0,1]$ norm of $G(x)$:
$
\|G(x)\|_{\bbL^2}^2
=
\int_0^1 \{G(x)(t)\}^2\,dt.
$ \\

$\|G(x)\|_L$ 
& Discrete weighted norm on the output grid:
$
\|G(x)\|_L^2
=
\sum_{\ell=1}^L
w_\ell \{G(x)(t_\ell)\}^2.
$ \\

$\|G\|_n$ 
& Empirical norm over the training sample and output grid:\\
&$
\|G\|_n^2
=
\frac{1}{n}\sum_{i=1}^n
\sum_{\ell=1}^{L_n}
w_\ell \{G(X_i)(t_{n,\ell})\}^2.
$ \\

$w_\ell, w_{i,\ell}$ 
& Quadrature weights associated with the common output-grid points $t_\ell$ in the theoretical analysis, and those with the curve-specific output-grid points $t_{i,\ell}$ in implementation, respectively. \\

$\CR(\widehat G_{n,u},G_{0,n,u})$ 
& Prediction risk for estimating the truncated target:\\
&$
\CR(\widehat G_{n,u},G_{0,n,u})
=
\E\left[
\|\widehat G_{n,u}(X)-G_{0,n,u}(X)\|_{\bbL^2}^2
\right],
$
where $X$ is an independent copy of the predictor, and the expectation is over the training data and $X$. \\

$\widehat R_n(\widehat G_{n,u},G_{0,n,u})$
& Intermediate risk based on the empirical norm $\|\cdot\|_n$:\\
&$
\widehat R_n(\widehat G_{n,u},G_{0,n,u})
=
\E\left[
\|\widehat G_{n,u}-G_{0,n,u}\|_n^2
\right].
$ \\

$R(\widehat G_{n,u},G_{0,n,u})$
& Intermediate risk based on the discrete output-grid norm $\|\cdot\|_L$:\\
&$
R(\widehat G_{n,u},G_{0,n,u})
=
\E\left[
\|\widehat G_{n,u}(X)-G_{0,n,u}(X)\|_L^2
\right],
$
where $X$ is an independent copy of the predictor, and the expectation is over the training data and $X$. \vskip.1cm
\\[3mm]

\multicolumn{2}{l}{\textbf{Implementation notation}} \\[1mm]

$\{s_{i,j}\}_{j=1}^{J_i}$ 
& Subject-specific irregular grid on which the $i$-th input function is observed. \\

$\{t_{i,\ell}\}_{\ell=1}^{L_i}$ 
& Subject-specific irregular grid on which the $i$-th response function is observed. \\

$J_i$ 
& Number of observed input locations for subject $i$ in the implementation.  \\

$L_i$ 
& Number of observed output locations for subject $i$ in the implementation. \\

notation remark
& Here $s_{i,j}$, $t_{i,\ell}$, $J_i$, and $L_i$ denote subject-specific
sampling grids and their corresponding numbers of observation points.
They should not be confused with the common theoretical grids
$\{s_{n,j}\}_{j=1}^{J_n}$, $\{t_{n,\ell}\}_{\ell=1}^{L_n}$ and their
sizes $J_n$ and $L_n$, used in the theoretical analysis. \\

$B_k$ 
& The $k$-th cubic B-spline basis function used to encode the input curve. \\

$n_B$ 
& Number of B-spline basis functions used for input encoding. \\

$c^{(x)}_{i,k}$ 
& Estimated coefficient of the $k$-th B-spline basis function for the $i$-th input curve. \\

$x_i^{\mathrm{feat}}$ 
& Fixed-dimensional feature vector of the $i$-th input curve:
$
x_i^{\mathrm{feat}}
=
(c^{(x)}_{i,1},\ldots,c^{(x)}_{i,n_B})^\top.
$ \\

$\mathrm{CoefNet}$ 
& Coefficient network mapping $x_i^{\mathrm{feat}}$ to the vector of input-dependent coefficients. \\

$\mathrm{BasisNet}$ 
& Basis network mapping an output argument $t$ to the vector of basis functions. \\

$\gamma(t)$ 
& Optional Fourier-feature representation of the output argument:\\
&$
\gamma(t)
=
(t,\sin(2\pi t),\ldots,\sin(2\pi n_{\mathrm{freq}}t),
\cos(2\pi t),\ldots,\cos(2\pi n_{\mathrm{freq}}t))^\top.
$ \\

$n_{\mathrm{freq}}$ 
& Number of Fourier frequencies used in the basis-network input. \\


$\widehat Y_i(t)$ 
& Predicted response value for subject $i$ at location $t$. \\


$\CL_n(\theta,\eta)$ 
& Empirical integrated squared loss used in the practical SNO training procedure, $\CL_n(\theta,\eta)
= \frac{1}{n} \sum_{i=1}^n \sum_{\ell=1}^{L_i} w_{i,\ell}\,
  \left(Y_{i,\ell} - \widehat Y_{i,\ell}\right)^2$. \\

$\mathrm{iRMSE}$ 
& Root mean integrated squared error used for test evaluation:\\&
$
\mathrm{iRMSE}
=
\left[
\E_X\int_0^1
\{\widehat Y(X)(t)-G_0(X)(t)\}^2\,dt
\right]^{1/2}.
$ \\[3mm]

\multicolumn{2}{l}{\textbf{Argo-data notation}} \\[1mm]

$X_i^{\mathrm{temp}}(s)$ 
& Temperature profile for the $i$-th Argo profile. \\

$X_i^{\mathrm{psal}}(s)$ 
& Salinity profile for the $i$-th Argo profile. \\

$X_i^{\mathrm{doxy}}(s)$ 
& Dissolved-oxygen profile for the $i$-th Argo profile. \\

$s$ 
& Pressure or normalized pressure/depth coordinate in the Argo application. \\

\hline
\end{longtable}

\section*{Code Availability}

The code for implementing computations in this paper is available at \url{https://github.com/senyuan-juncheng/SNO}.

\section*{Acknowledgments}
The authors sincerely thank Aniruddha Rao and Matthew Reimherr for their kind assistance on implementing the code used in \cite{rao2023modern}.

\bibliographystyle{abbrvnat}
\footnotesize
\bibliography{reference}
    
\end{document}